\theoremstyle{plain}
\newtheorem{theo}{Theorem}[section]
\newtheorem{cor}[theo]{Corollary}
\newtheorem{prop}[theo]{Proposition}
\newtheorem{lem}[theo]{Lemma}
\theoremstyle{remark}
\newtheorem{rem}[]{Remark}[section]
\newcommand{\cl}{{\mathcal L}}
\newcommand{\cp}{{\mathcal P}}
\newcommand{\E}{{\mathbb E}}
\newcommand{\N}{{\mathbb N}}
\renewcommand{\P}{{\mathbb P}}
\newcommand{\ind}{{\bf 1}}
\newcommand{\Var}{{\rm Var}}
\begin{document}

\title{Asymptotics
of the minimal clade size and related functionals of certain beta-coalescents}

\date{\today}

\author{Arno Siri-J\'egousse}
\address{Universidad de Guanajuato, Departamento de Matem\'aticas.
Calle Jalisco s/n
Col. Mineral de Valenciana
36240 Guanajuato, Guanajuato
Mexico.}
\email{arno@cimat.mx}

\author{Linglong Yuan}
\address{Universit\'e Paris 13, Sorbonne Paris Cit\'e, LAGA, CNRS, UMR 7539, F-93430, Villetaneuse, France.}
\email{yuan@math.univ-paris13.fr}

\begin{abstract}
{This article shows the asymptotics of distributions of various functionals
 of the Beta$(2-\alpha,\alpha)$ $n$-coalescent process with $1<\alpha<2$ when
 $n$ goes to infinity.
%The Beta $n$-coalescent 
{This process} is a Markov process taking {values} in the set of partitions of
$\{1, \dots, n\}$, evolving from the intial value $\{1\},\cdots, \{n\}$  by {merging (coalescing) blocks together into one} and finally  {reaching} the absorbing state $\{1, \dots, n\}$. 
The minimal clade
of $1$ is the block which contains $1$ at the time of
{coalescence of the singleton $\{1\}$.}
The limit size of the minimal clade of $1$ is provided.
To this, we express it as a function of the {coalescence} time of $\{1\}$ and sizes of blocks at that time.
 Another quantity concerning the size of the largest block (at {deterministic small} time and at the {coalescence} time of $\{1\}$) is also studied. }
\end{abstract}

\thanks{Linglong Yuan  benefited from the support of the ``Agence Nationale de la Recherche'': ANR MANEGE (ANR-09-BLAN-0215). }

\keywords{Beta-coalescent, Kingman's paintbox construction, 
minimal clade size, external branch lengths, largest bloc size, block-counting process. }
\subjclass[2010]{60J25, 60F05, 92D15}

\maketitle
\section{Introduction and main results}
Coalescent theory was initiated by Kingman (\cite{Kin82a, Kin82b, Kin82c}) to model the genealogical tree of a sample of $n$ individuals of a certain population. The {so-called} Kingman $n$-coalescent is a {continuous-time Markov chain taking values in $\cp_n$}, the set of partitions of $\N_n=\{1,2,\cdots,n\}$. 
It starts from $n$ singletons $\{1\}, \{2\}, \cdots, \{n\}$ representing $n$ individuals {(or lineages)} and at any time, each couple of blocks merges (or coalesces)  independently into one block at rate $1$. 
The process reaches almost surely in finite time the absorbing state $\{1,2,\cdots,n\}$  which is called MRCA (most recent common ancestor). 
Kingman showed that  the genealogy of a {sample of size $n$ in a population evolving according to the Cannings population model of size $N$} converges in the sense of finite dimensional distribution to the Kingman $n$-coalescent when $N$ goes to $\infty$, under some assumptions over the reproduction law in the Cannings model. 
Roughly speaking, it is required that one individual in the population should not have a lot of progenies so that its children occupy a large ratio of the next generation. 
This assumption happens to fail in some marine species (see \cite{He94}, \cite{El06},  \cite{Bo94}, \cite{Ar04}). To model this phenomenon, Pitman (\cite{Pit99}) and Sagitov (\cite{Sag99}) introduced at the same time the $\Lambda$ $n$-coalescent,
denoted by $\Pi^{(n)}=(\Pi^{(n)}(t),t\geq 0)$. It is characterized by a finite measure $\Lambda$ on $[0,1]$. 

{The process $\Pi^{(n)}$ is still a  continuous-time Markov chain starting from {$\{1\}, \{2\}, \cdots, \{n\}$}, but with the following dynamics:}
at any time $t\geq 0$, if $\Pi^{(n)}(t)$ has $b$ blocks ($b\geq 2$), then each $k$-tuple ($2\leq k\leq b$) of blocks coalesces together into one at rate 
\begin{equation}\label{lambda}\lambda_{b,k}:=\int_0^1x^{k-2}(1-x)^{b-k}\Lambda(dx).\end{equation}
As a consequence, the rate to the next coalescence is 
\begin{equation}\label{gb}g_b:=\sum_{k= 2}^b {\binom{b}{k}}\lambda_{b,k}.\end{equation}
In particular, the Kingman $n$-coalescent is a special $\Lambda$ $n$-coalescent with $\Lambda$ being the Dirac measure  at $0$.

Definition (\ref{lambda}) {is a reformulation of two properties of the process $\Pi^{(n)}$} {(see \cite{Pit99})}.
The first one is exchangeability. Let $\rho_{n}$ be a permutation of $\mathbb{N}_n$. The map $\rho_n$ induces naturally a map $\bar{\rho}_n$ on $\cp_n$.  Then we have
$$\bar{\rho}_n\circ \Pi^{(n)}\stackrel{(d)}{=}\Pi^{(n)}.$$
The second one is consistency. 
For any $2\leq m\leq n$, let $\varrho_{n,m}$ be the natural restriction from $\mathbb{N}_n$ to $\mathbb{N}_m$ and $\bar{\varrho}_{n,m}$ the induced map {from $\cp_n$ to $\mathcal{P}_m$}. Then 
$$\bar{\varrho}_{n,m}  \circ \Pi^{(n)}\stackrel{(d)}{=}\Pi^{(m)}.$$
These two properties also imply that one can build a projective limit process, the 
so-called $\Lambda$-coalescent, denoted by $\Pi=(\Pi(t),t\geq 0)$,
taking values in the set $\cp$ of partitions of $\mathbb{N}$.
 For any restriction $\varrho_{n}$ from $\mathbb{N}$ to $\mathbb{N}_n$ and its induced map $\bar{\varrho}_n$ from $\cp$ to $\cp_n$, 
$$\bar{\varrho}_{n}\circ \Pi\stackrel{(d)}{=}\Pi^{(n)}.$$
 
 The Beta$(2-\alpha,\alpha)$-coalescent 
 with $0< \alpha< 2$ is a special and important example of $\Lambda$-coalescents.
  In this case, $\Lambda$ is the Beta measure with parameters $2-\alpha$ and $\alpha$.  
  If $\alpha$ tends to $2$, then the limit process obtained is the Kingman coalescent. 
If $\alpha=1$, the process obtained is the celebrated Bolthausen-Sznitman coalescent (\cite{BS98}). 
This article deals {with the case $1<\alpha<2$ and,
for the sake of simplicity,} we will refer to Beta$(2-\alpha,\alpha)$-coalescent as Beta-coalescent.
This class of coalescent processes was introduced by Schweinsberg (\cite{Sch03}) and deeply studied in \cite{BBS07},\cite{BBS08}. 
In particular, in \cite{BBS08}, many results on the small-time behavior of various functionals of the Beta-coalescent are discovered.
{Meanwhile}, many asymptotic studies, motivated by biology, have been developed for the Beta $n$-coalescent
(see for example \cite{DDSJ08}, \cite{Ker12}, \cite{DFSY}, \cite{KIA12}) , when $n$ goes to $\infty$.
%\textcolor{blue}{Notice that in the latter case, when it is necessary, the time is also depending on $n$. This explains why  limit laws obtained in the $n$-coalescent framework do not correspond to those obtained in the coalescent framework. One example is given in page 4 of \cite{DFSY}.}

In this paper, we aim to study more asymptotic results on some functionals of
 the Beta $n$-coalescent with $1<\alpha<2$ when $n$ grows to $\infty$. {We denote 
$$A\sim B,$$
if $\frac{A}{B}$ tends {deterministically} or randomly to $1$ in the limit, depending on different contexts. Here $A,B$ can be functions, sequences of real values,  random variables.} Denote by $\stackrel{a.s.}{\to}$ the almost sure convergence and by $\stackrel{P}{\to}$ the convergence in probability.

{The length of the external branch of individual $i$, also called \textit{unicity of individual $i$} {by biologists} (\cite{RBY04}),
is denoted by $T_i^{(n)}$.
It is the coalescence time of $\{i\}$,} defined as follows
$$T_i^{(n)}:=\sup\{t: \{i\}\in \Pi^{(n)}(t)\}.$$
{The length of a randomly chosen external branch provides a measure} of the genetic variation of the population
since it gives some information on the ``distance" of an individual to the rest of the sample.
{Exchangeability of the coalescent implies that}
$$T_i^{(n)}\stackrel{(d)}{=}T_1^{(n)},\,\,1\leq i\leq n.$$
The law of $T_1^{(n)}$ has interested many people since the first article \cite{BF05} dealing with the Kingman coalescent case.
We give a short survey of results already discovered.
\begin{itemize}
\item Kingman:  $nT_1^{(n)}$ converges in distribution to a random variable with density $\frac{8}{(2+t)^3}\ind_{t\geq 0}$ (\cite{BF05},\cite{CNK07}).
\item Beta-coalescent with $1<\alpha<2$: $n^{\alpha-1}T_1^{(n)}\stackrel{(d)}{\longrightarrow}T$, where $T$ is a random variable with density  function $f_T$:
\begin{equation}\label{ft}f_T(t)=\frac{1}{(\alpha-1)\Gamma(\alpha)}(1+\frac{t}{\alpha\Gamma(\alpha)})^{-\frac{\alpha}{\alpha-1}-1}\ind_{t\geq 0}.\end{equation}
 (see \cite{DFSY} {where the result is stated in a more general case}). 
 \item $\displaystyle\lim_{n\to\infty}\frac{g_n}{n\mu^{(n)}}=0$ where $g_n$ is defined in (\ref{gb}) and $\mu^{(n)}=\int_{1/n}^1x^{-1}\Lambda(dx):$ $\mu^{(n)}T_1^{(n)}$ is asymptotically distributed as an exponential random variable with mean $1$ (\cite{Yuan13}).  This class of processes contains Beta-coalescents with $0<\alpha<1$ (see also \cite{Pit99}, \cite{M10}, \cite{GIM08} for other proofs) and the Bolthausen-Sznitman coalescent (see also \cite{DM13}, \cite{FM09} for other proofs). 
\end{itemize}

In this paper, we prove {the following}
\begin{theo}\label{th:manyt}
Consider a Beta $n$-coalescent with $1<\alpha<2$.  For any fixed $k\in\mathbb{N}$, as $n\to\infty$,\begin{equation}
n^{\alpha-1}(T_1^{(n)},\cdots, T_k^{(n)})\overset{(d)}{\longrightarrow} (T_1, \cdots, T_k),
\end{equation}
where $(T_i,i\in\N)$ are i.i.d. copies of $T$ with density (\ref{ft}).
\end{theo}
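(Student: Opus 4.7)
Since the marginal convergence $n^{\alpha-1}T_1^{(n)}\stackrel{(d)}{\to}T$ is already known from \cite{DFSY}, it remains to prove joint asymptotic independence. Denote by $F^{(n)}(t):=\#\{i:\{i\}\in\Pi^{(n)}(t)\}$ the number of singleton blocks at time $t$. Arrange the arguments so that $t_1\le t_2\le\dots\le t_k$. Because $F^{(n)}$ is non-increasing, the singleton sets form a nested chain $\cs^{(n)}(t_1)\supseteq\dots\supseteq\cs^{(n)}(t_k)$ of cardinalities $F^{(n)}(t_j)$; by exchangeability of $\Pi^{(n)}$ this chain is, conditionally on the sizes, uniformly distributed among all such nested chains in $\{1,\dots,n\}$. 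A greedy combinatorial count (insert label $k$ into $\cs^{(n)}(t_k)$, then label $k-1$ into $\cs^{(n)}(t_{k-1})$ minus the previously occupied position, and so on; at step $j$ the label $j$ has $F^{(n)}(t_j)-(k-j)$ admissible positions out of $n-(k-j)$ remaining) yields
\begin{equation*}
P\bigl(T_1^{(n)}>t_1,\dots,T_k^{(n)}>t_k\bigr)\,=\,E\!\left[\,\prod_{j=1}^{k}\frac{F^{(n)}(t_j)-(k-j)}{n-(k-j)}\right].
\end{equation*}

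Plugging in $t_j=s_jn^{-(\alpha-1)}$ with $0\le s_1\le\dots\le s_k$ fixed, the theorem reduces to the law of large numbers
\begin{equation*}
\frac{F^{(n)}(sn^{-(\alpha-1)})}{n}\stackrel{P}{\to}\bar F_T(s):=P(T>s),\qquad n\to\infty,
\end{equation*}
for every $s\ge0$; indeed, each factor is bounded by $1$, and bounded convergence then gives $P(n^{\alpha-1}T_j^{(n)}>s_j,\,1\le j\le k)\to\prod_{j=1}^k\bar F_T(s_j)$, which is the joint survival function of $k$ i.i.d.\ copies of $T$. The one-dimensional result of \cite{DFSY} already yields convergence of $E[F^{(n)}(sn^{-(\alpha-1)})/n]$ to $\bar F_T(s)$. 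To upgrade this to convergence in probability, I would use (i) the known scaling limit $n^{-1}N^{(n)}(sn^{-(\alpha-1)})\stackrel{P}{\to}\nu(s)$ of the block-counting process for Beta-coalescents (\cite{Sch03},\cite{BBS08}), and (ii) the explicit per-singleton coalescence rate
\begin{equation*}
\int_0^1 x^{-1}\bigl(1-(1-x)^{N^{(n)}(t)-1}\bigr)\,\Lambda(dx)\;\sim\;c_\alpha\,N^{(n)}(t)^{\alpha-1},
\end{equation*}
producing an approximate ODE $\tfrac{d}{dt}\log(F^{(n)}/n)\approx -c_\alpha(N^{(n)})^{\alpha-1}$ whose integration delivers a deterministic limit that coincides with $\bar F_T$.

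The main obstacle is this last step. A direct variance calculation for $F^{(n)}/n$ returns precisely the pairwise joint probability $P(T_1^{(n)}>t,T_2^{(n)}>t)$, which is itself a special case of the theorem, so a purely second-moment route is circular. Importing the separately proven scaling limit of $N^{(n)}$ breaks the loop. As a sanity check, in the Poissonian (paintbox) construction of $\Pi^{(n)}$ the rate at which two specified singletons simultaneously ``flip heads'' at a common atom is $\int_0^1 x^2\cdot x^{-2}\Lambda(dx)=\Lambda([0,1])=1$, so within the relevant window $[0,Mn^{-(\alpha-1)}]$ such simultaneous firings occur with probability $O(n^{-(\alpha-1)})\to0$: this provides a transparent intuition for the asymptotic decoupling of the $k$ external branches.
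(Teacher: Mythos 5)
Your combinatorial identity
\begin{equation*}
\mathbb{P}\bigl(T_1^{(n)}>t_1,\dots,T_k^{(n)}>t_k\bigr)=\mathbb{E}\Bigl[\prod_{j=1}^{k}\frac{F^{(n)}(t_j)-(k-j)}{n-(k-j)}\Bigr],\qquad t_1\le\dots\le t_k,
\end{equation*}
is correct (it follows from exchangeability and the nestedness of the singleton sets), and it does reduce the theorem, via bounded convergence, to the weak law of large numbers $F^{(n)}(sn^{1-\alpha})/n\stackrel{P}{\to}\mathbb{P}(T>s)$. This is a genuinely different and in one respect cleaner organization than the paper's, which first proves the equal-time factorization by conditioning on the ranked coalescent $\Theta(n^{1-\alpha}t)$ (Proposition \ref{pr:tas}) and then handles distinct times separately with the Markov property and the block-counting limit (Lemma \ref{suede}); your identity treats all time points at once.

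The proof is nevertheless incomplete: the law of large numbers for $F^{(n)}$, which carries the entire analytic content of the theorem, is only sketched, and you yourself flag it as the main obstacle. You rightly note that an unconditional second-moment bound for $F^{(n)}/n$ is circular (it is the $k=2$ case of the statement), but the substitute you propose --- integrating an approximate ODE for $\log(F^{(n)}/n)$ driven by $N^{(n)}$ --- is not carried out. To make it rigorous you would need (i) a version of the convergence $n^{-1}N^{(n)}(sn^{1-\alpha})\to\nu(s)$ that can be passed inside the time integral (the pointwise statement is essentially Lemma \ref{suede} of the paper rather than something available in \cite{Sch03}), and (ii) control of the martingale part of $F^{(n)}$; the latter is not automatic, because a single merger can absorb an unbounded number of singletons and the jump sizes have tail index $\alpha<2$, hence infinite variance, so a routine quadratic-variation bound needs care. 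The non-circular input that actually closes the loop is the almost sure small-time asymptotics of the rescaled frequency spectrum of the infinite coalescent (Theorem 1.4 of \cite{BBS08}): conditioning on $\Theta(t_n)$ with $t_n=n^{1-\alpha}t$, one shows that $n^{-1}\mathbb{E}[F^{(n)}(t_n)\mid\Theta(t_n)]$ and $n^{-2}\mathbb{E}[F^{(n)}(t_n)(F^{(n)}(t_n)-1)\mid\Theta(t_n)]$ converge almost surely to $\mathbb{P}(T>t)$ and $\mathbb{P}(T>t)^2$ via size-biased picks from $\Theta(t_n)$; conditional concentration plus dominated convergence then yields exactly the LLN your argument requires. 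This is in substance the paper's Proposition \ref{pr:tas}. Your skeleton is sound, but without this (or an honest completion of your ODE route) the proof does not stand on its own.
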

A similar result has been proved for Bolthausen-Sznitman coalescent (\cite{DM13}), but {the asymptotic independence} is not true for coalescents satisfying $\int_0^1x^{-1}\Lambda(dx)<\infty$ (\cite{M10}).

Let {$K^{(n)}=(K^{(n)}(t),t\geq 0)$ denote the block-counting process of $\Pi^{(n)}$, i.e., $K^{(n)}(t)$ stands for the number of blocks of the partition $\Pi^{(n)}(t)$ for $t\geq 0$.} Define 
$$Q^{(n)}:={K^{(n)}}((T_1^{(n)})_-)-{K^{(n)}}(T_1^{(n)})+1,$$
where $(T_1^{(n)})_-$ is the time just prior to $T_1^{(n)}$. 
{In other words, $Q^{(n)}$ is the number of blocks involved in the coalescence event of $\{1\}$ in $\Pi^{(n)}$}. 
%This quantity $Q^{(n)}$ gives a rough idea of how the process evolves.
\begin{theo}\label{alpha-1}
Consider a Beta $n$-coalescent with $1<\alpha<2$. $Q^{(n)}$ converges in law to a random variable $Q$ taking values in $\{2,3,\cdots\}$ such that for any $k\geq 2$\begin{equation}\label{pbn}
q_k:=\mathbb{P}(Q=k)=\frac{(\alpha-1)\Gamma(k-\alpha)}{\Gamma(k)\Gamma(2-\alpha)}.
\end{equation}  
{Furthermore}, $Q^{(n)}$ and $T_1^{(n)}$ are asymptotically independent. 
\end{theo}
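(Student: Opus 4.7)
The plan is to condition on the number of blocks at the coalescence time of $\{1\}$ and exploit the Markov property of the coalescent. Set $B_n := K^{(n)}((T_1^{(n)})_-)$. By the Markov property of $\Pi^{(n)}$ and exchangeability (the block containing $\{1\}$ plays no special role), conditionally on $\{B_n=b\}$ the random variable $Q^{(n)}$ has law
$$\mathbb{P}(Q^{(n)}=k\mid B_n=b) = \frac{\binom{b-1}{k-1}\lambda_{b,k}}{r_b},\qquad 2\le k\le b,$$
where $r_b := \sum_{j=2}^{b}\binom{b-1}{j-1}\lambda_{b,j}$ is the rate at which the block of $\{1\}$ is involved in a merger when $b$ blocks are present. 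Combining (\ref{lambda}) with the identity $\sum_{k=2}^{b}\binom{b-1}{k-1}x^{k-2}(1-x)^{b-k}=(1-(1-x)^{b-1})/x$ yields the convenient form
$$r_b = \int_0^1\frac{1-(1-x)^{b-1}}{x}\Lambda(dx).$$

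The first technical step is to compute $\lim_{b\to\infty}\mathbb{P}(Q^{(n)}=k\mid B_n=b)$. Plugging $\Lambda(dx)=B(2-\alpha,\alpha)^{-1}x^{1-\alpha}(1-x)^{\alpha-1}dx$ into the integral, the change of variables $y=bx$ and dominated convergence give
$$r_b\sim\frac{b^{\alpha-1}}{B(2-\alpha,\alpha)}\int_0^\infty y^{-\alpha}(1-e^{-y})\,dy \;=\; \frac{b^{\alpha-1}}{(\alpha-1)\Gamma(\alpha)},$$
where the evaluation of the integral is by parts and we used $B(2-\alpha,\alpha)=\Gamma(2-\alpha)\Gamma(\alpha)$. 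On the other hand, writing $\lambda_{b,k}=\Gamma(k-\alpha)\Gamma(b-k+\alpha)/[\Gamma(b)B(2-\alpha,\alpha)]$ and applying Stirling's formula, for fixed $k$,
$$\binom{b-1}{k-1}\lambda_{b,k} = \frac{\Gamma(k-\alpha)\Gamma(b-k+\alpha)}{\Gamma(k)\Gamma(b-k+1)\,B(2-\alpha,\alpha)} \sim \frac{\Gamma(k-\alpha)}{\Gamma(k)\,B(2-\alpha,\alpha)}\,b^{\alpha-1}.$$
Dividing the two asymptotics yields $(\alpha-1)\Gamma(k-\alpha)/[\Gamma(k)\Gamma(2-\alpha)]=q_k$.

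The second step is to show that $B_n\to\infty$ in probability. Since $t\mapsto K^{(n)}(t)$ is non-increasing, on the event $\{T_1^{(n)}\le cn^{1-\alpha}\}$ one has $B_n\ge K^{(n)}(cn^{1-\alpha})$. For any $\varepsilon>0$ the parameter $c$ can be chosen so that this event has probability at least $1-\varepsilon$ for $n$ large, thanks to the convergence in law of $n^{\alpha-1}T_1^{(n)}$ recalled before the theorem; and the small-time asymptotic $K^{(n)}(cn^{1-\alpha})/n\to(1+c(\alpha-1)/\Gamma(\alpha))^{-1/(\alpha-1)}$ in probability (or, equivalently, a matching lower bound), established in \cite{BBS08}, guarantees $K^{(n)}(cn^{1-\alpha})\to\infty$ in probability.

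With these ingredients, both claims follow simultaneously. For any $k\ge 2$ and any bounded continuous $\phi$,
$$\mathbb{E}\bigl[\ind_{\{Q^{(n)}=k\}}\phi(n^{\alpha-1}T_1^{(n)})\bigr] = \mathbb{E}\Bigl[\frac{\binom{B_n-1}{k-1}\lambda_{B_n,k}}{r_{B_n}}\,\phi(n^{\alpha-1}T_1^{(n)})\Bigr] \longrightarrow q_k\,\mathbb{E}[\phi(T)],$$
by bounded convergence: the ratio is bounded by $1$, converges in probability to $q_k$ since $B_n\to\infty$ in probability, and $n^{\alpha-1}T_1^{(n)}$ converges in distribution to $T$. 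Taking $\phi\equiv 1$ gives (\ref{pbn}), while the product form of the limit identifies the joint weak limit as that of independent $Q$ and $T$, proving asymptotic independence. The main obstacle is precisely ensuring $B_n\to\infty$: the conditional-law formula is only asymptotic, so a quantitative lower bound on $B_n$ uniformly in the size of $T_1^{(n)}$ is needed, and this is where the monotonicity of $K^{(n)}$ together with the cited small-time behaviour of \cite{BBS08} is indispensable; the remaining calculations reduce to beta integrals and Stirling's formula.
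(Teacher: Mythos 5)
Your proposal is correct and follows essentially the same route as the paper: both identify the conditional law of $Q^{(n)}$ given $B_n=K^{(n)}((T_1^{(n)})_-)=b$ as $\lambda_{1,b,k}/g_{1,b}$, show this ratio tends to $q_k$ via the asymptotics $g_{1,b}\sim b^{\alpha-1}/((\alpha-1)\Gamma(\alpha))$ and $\lambda_{1,b,k}\sim \Gamma(k-\alpha)b^{\alpha-1}/(\Gamma(k)\Gamma(\alpha)\Gamma(2-\alpha))$, and then conclude both the limit law and the asymptotic independence from $B_n\to\infty$ in probability (the paper cites the convergence of $K^{(n)}((T_1^{(n)})_-)/n$ from \cite{DFSY}, while you derive the needed divergence from monotonicity of $K^{(n)}$ plus the small-time block count, which is a fine and slightly more self-contained variant). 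The only blemish is the misstated limiting constant for $K^{(n)}(cn^{1-\alpha})/n$, which should be $(1+c/(\alpha\Gamma(\alpha)))^{-1/(\alpha-1)}$, but only its positivity is used, so nothing is affected.
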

Notice that in the Kingman coalescent,
$Q^{(n)}=Q=2$ almost surely.
{The following proposition shows that, for $\Lambda$-coalescents
satisfying $\int_{[0,1]}x^{-1}\Lambda(dx)<\infty$, $Q^{(n)}$ converges in probability to infinity.}

%\textcolor{blue}{
%\begin{prop}\label{BS}
%Consider the Bolthausen-Sznitman coalescent . Then $Q^{(n)}\stackrel{(d)}{\to} \infty.$
%\end{prop}
%}
\begin{prop}\label{dust}
Consider a $\Lambda$ $n$-coalescent with the characteristic measure  satisfying $\int_0^1x^{-1}\Lambda(dx)<\infty$, then {$Q^{(n)}$ converges in probability to infinity}. 
\end{prop}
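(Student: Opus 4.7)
The plan is to exploit Pitman's Poisson point process (PPP) construction of the $\Lambda$-coalescent. Consider a PPP $\cn$ on $(0,\infty)\times (0,1]$ with intensity $dt\otimes x^{-2}\Lambda(dx)$: at each atom $(t,x)$ of $\cn$, each extant block is independently marked with probability $x$, and the marked blocks merge into one. In this construction, the coalescence time $T_1^{(n)}$ of $\{1\}$ coincides with the first atom $(T,X)$ at which the singleton $\{1\}$ is marked. By thinning, these atoms form a PPP with intensity $dt\otimes x^{-1}\Lambda(dx)$, of total rate $\mu:=\int_0^1 x^{-1}\Lambda(dx)<\infty$. Hence $T$ is $\text{Exp}(\mu)$ and $X$ is independent of $T$ with law $\mu^{-1}x^{-1}\Lambda(dx)$; in particular $X>0$ almost surely. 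Crucially, conditionally on $X=x$ and $K^{(n)}((T_1^{(n)})_-)=k$, the quantity $Q^{(n)}-1$ is Binomial with parameters $(k-1,x)$, because the other $k-1$ blocks present just before time $T$ are marked at the atom $(T,X)$ independently with probability $x$.

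Given these ingredients, $Q^{(n)}\stackrel{P}{\to}\infty$ reduces to showing $(K^{(n)}((T_1^{(n)})_-)-1)\,X\stackrel{P}{\to}\infty$, since by Chebyshev's inequality a binomial with mean $\nu$ and variance at most $\nu$ exceeds any fixed $M$ with probability tending to $1$ whenever $\nu\to\infty$. As $X>0$ almost surely and does not depend on $n$, it suffices to establish $K^{(n)}((T_1^{(n)})_-)\stackrel{P}{\to}\infty$. I would first prove $K^{(n)}(t)\stackrel{P}{\to}\infty$ for every fixed $t>0$. Conditionally on $\cn$, the indicators $\ind\{T_i^{(n)}>t\}$ for $i=1,\dots,n$ are i.i.d. Bernoulli$(Y)$ with $Y:=\prod_{(s,x)\in\cn,\,s\le t}(1-x)$. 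Moreover, $-\log Y$ is a Poisson integral with mean $t\int_0^1 -\log(1-x)\,x^{-2}\Lambda(dx)<\infty$, since the integrand is comparable to $x^{-1}$ near $0$ and $\int_0^1 x^{-1}\Lambda(dx)<\infty$. Hence $Y>0$ almost surely, and by the conditional law of large numbers the number $S^{(n)}(t)$ of singletons in $\Pi^{(n)}(t)$ satisfies $S^{(n)}(t)/n\to Y$ a.s. Since $K^{(n)}(t)\ge S^{(n)}(t)$, one obtains $K^{(n)}(t)\to\infty$ a.s. Combining this with the tightness of $T_1^{(n)}$ (it converges in law to $\text{Exp}(\mu)$ because $\mu^{(n)}\to\mu<\infty$) and the monotonicity of $t\mapsto K^{(n)}(t)$ yields $K^{(n)}((T_1^{(n)})_-)\stackrel{P}{\to}\infty$.

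The step requiring the most care is the precise identification of the conditional law $Q^{(n)}-1\mid X=x,\,K^{(n)}((T_1^{(n)})_-)=k\sim\text{Bin}(k-1,x)$ within the PPP framework: one must carefully separate the atoms of $\cn$ strictly before $T$ (which determine $K^{(n)}((T_1^{(n)})_-)$) from the atom $(T,X)$ itself, and verify that the markings on the remaining blocks at $(T,X)$ are independent Bernoulli$(X)$ variables unaffected by conditioning on $\{1\}$ being marked. Once this is cleanly set up, the remaining steps --- tightness of $T_1^{(n)}$, the conditional law of large numbers for $S^{(n)}(t)$, and the Chebyshev bound for the binomial --- are routine.
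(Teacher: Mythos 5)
Your proposal is correct and follows essentially the same route as the paper: the paper works with the equivalent subordinator form of Pitman's Poissonian construction and argues that, at the first atom marking $\{1\}$, the number of other singletons simultaneously marked is of order $nYX$ with $Y,X>0$, hence tends to infinity. The only wrinkle to watch (which you already flag as the delicate step) is that $T_1^{(n)}$ is the first atom at which $\{1\}$ \emph{and at least one other block} are marked, so the conditional law of $Q^{(n)}-1$ is a binomial conditioned to be positive rather than a plain binomial, and the a.s.\ positivity of $Y$ should be justified via $\int_0^1\bigl(1\wedge(-\log(1-x))\bigr)x^{-2}\Lambda(dx)<\infty$ rather than finiteness of the mean of $-\log Y$ (which can fail near $x=1$) --- both adjustments only help the argument and do not affect the conclusion.
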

The above proposition is even true {in} the Bolthausen-Sznitman coalescent case (see Remark \ref{bsinf}). 

A quantity of interest in biology is the \textit{minimal clade size}.   
{It is the size of the minimal clade of a randomly chosen individual
(or of the individual 1, considered in this paper).
The minimal clade is the
block that contains $1$ at the time $\{1\}$ is coalesced.
The size of the minimal clade tells how many individuals share the genealogy with individual 1 after time $T_1^{(n)}$.
% It is the sum of the sizes of the $Q^{(n)}$ blocks participating to the coalescence event involving $\{1\}$. 
Let us denote the {minimal clade size} by $Y^{(n)}$.
In the Kingman case, Blum and Fran\c{c}ois (\cite{BF05}) showed that 
$$\mathbb{P}(Y^{(n)}=k)=\frac{4}{(k+1)k(k-1)}, k=2, \cdots, n-1; \,\, \mathbb{P}(Y^{(n)}=n)=\frac{2}{n(n-1)}.$$
Freund and Siri-J\'egousse \cite{FA13} studied the case of the Bolthausen-Sznitman coalescent. 
In this case
$$\frac{\ln Y^{(n)}}{\ln n}\stackrel{(d)}{\longrightarrow } U_{[0,1]},$$
where $U_{[0,1]}$ is a uniform variable over $[0,1]$.
Asymptotics of moments were also found.

We state out our result by at first giving some notations.
\begin{itemize}
\item Let $\mu$ be Slack's probability distribution on {$[0,\infty)$} (see \cite{Sla68}) characterized by its Laplace transform
\begin{equation}\label{eq:mulap}
\cl_\mu(\lambda)=\int_0^{\infty}e^{-\lambda x}\mu(dx)=1-(1+\lambda^{1-\alpha})^{-\frac{1}{\alpha-1}}, \quad\lambda \geq 0.
\end{equation}
\item {Define a 1-parameter family of laws (not a process)} $(\beta(t), t\geq 0)$ such that for any $k\geq 1$
\begin{align}\label{eq:beta}
\mathbb{P}(\beta(t)=k)&=\frac{1}{\Gamma(k)}\left(\frac{t}{\alpha\Gamma(\alpha)}\right)^\frac{k-1}{\alpha-1}\int_0^{\infty}e^{-x\left(\frac{t}{\alpha\Gamma(\alpha)}\right)^\frac{1}{\alpha-1}}x^{k}\mu(dx).
\end{align}
\end{itemize}

The following result could be regarded as a consequence of Theorem \ref{alpha-1}.
\begin{theo}\label{size1}
Consider a Beta $n$-coalescent with $1<\alpha<2$.  Let $(\beta_i(t), t\geq 0)_{i\geq 1}$ be i.i.d. copies of $(\beta(t),t\geq 0)$ {and $Q,T$ be random variables defined respectively in (\ref{pbn}) and (\ref{ft})}. Assume that $ (\beta_i(t), t\geq 0)_{i\geq 1}, Q, T$ are all independent. Then 
\begin{equation}
Y^{(n)}\stackrel{(d)}{\longrightarrow }Y=1+\sum_{i=1}^{Q-1}\beta_{i}(T).
\end{equation}
The law of $Y$ can be described {as follows}: for any $l\geq2,$
\begin{align*}&\displaystyle \mathbb{P}(Y=l)=\int_0^{\infty}\sum_{k=2}^{l}q_k\sum_{i_1+\cdots+i_{k-1}=l-1}\Big(\Pi_{j=1}^{k-1}\mathbb{P}(\beta(t)=i_j)\Big)f_T(t)dt.
\end{align*}
\end{theo}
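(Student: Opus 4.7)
My plan is to write $Y^{(n)}$ as a sum of random length, combine Theorem~\ref{alpha-1} with an asymptotic block-size law at the time scale $t\,n^{-(\alpha-1)}$, and conclude by a direct convolution argument. I decompose
$$
Y^{(n)} = 1 + \sum_{j=1}^{Q^{(n)}-1} B_j^{(n)},
$$
where $B_j^{(n)}$ is the size of the $j$-th block that merges with $\{1\}$ at time $T_1^{(n)}$. Since the rates $\lambda_{b,k}$ in \reff{lambda} do not depend on block sizes, exchangeability gives that, conditionally on $T_1^{(n)}$, $Q^{(n)}=k$, and $K^{(n)}((T_1^{(n)})_-)=b$, the family $(B_j^{(n)})_{j\leq k-1}$ is obtained by sampling uniformly without replacement among the sizes of the $b-1$ non-$\{1\}$ blocks of $\Pi^{(n)}((T_1^{(n)})_-)$.

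The analytical core is the following claim, which I expect to follow from the link between the Beta-coalescent and the $\alpha$-stable continuous-state branching process (cf.\ \cite{BBS07,Sch03}): for any fixed $t>0$, the size of the block of $\Pi^{(n)}(s_n)$, with $s_n:=t\,n^{-(\alpha-1)}$, containing an arbitrarily tagged lineage converges in distribution to $\beta(t)$. Heuristically, seen forward in time the block of the tagged lineage on scale $s_n$ is described by a CSBP whose mass has law $\mu$ (Slack's distribution, \reff{eq:mulap}), and the explicit formula \reff{eq:beta} is a Poissonization: conditionally on the mass $X\sim\mu$, the number of lineages in the tagged block is asymptotically $\mathrm{Poisson}(X\,\theta(t))$ with $\theta(t)=(t/(\alpha\Gamma(\alpha)))^{1/(\alpha-1)}$. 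Extending to several tagged lineages yields the joint asymptotic independence of finitely many block sizes, because at this scale the tagged blocks are pairwise disjoint with high probability.

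Combining these ingredients with Theorem~\ref{alpha-1} and the fact that sampling a bounded number $Q-1$ of blocks without replacement from $K^{(n)}(s_n)\to\infty$ blocks is asymptotically equivalent to sampling with replacement, the continuous mapping theorem (or a Skorokhod coupling) yields
$$
Y^{(n)}\stackrel{(d)}{\longrightarrow} 1+\sum_{i=1}^{Q-1}\beta_i(T),
$$
with the claimed mutual independence of $Q$, $T$ and the $\beta_i(\cdot)$'s. The explicit expression for $\mathbb{P}(Y=l)$ is then immediate: conditioning on $(Q,T)=(k,t)$, writing $\mathbb{P}(\sum_{j=1}^{k-1}\beta_j(t)=l-1)=\sum_{i_1+\cdots+i_{k-1}=l-1}\prod_{j=1}^{k-1}\mathbb{P}(\beta(t)=i_j)$ by independence, and integrating against $f_T$ gives the stated formula.

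The main difficulty lies in identifying the $\beta(t)$-limit at the non-standard scale $s_n=t\,n^{-(\alpha-1)}$: most of the $n$ lineages are still isolated on this scale, yet the few non-singleton blocks already carry the full Slack/CSBP structure, so the identification requires a careful analysis of the early coalescent regime against the forward CSBP. A secondary delicate point is to justify the passage from the uniform sampling of blocks at the random time $T_1^{(n)}$ to i.i.d.\ copies of $\beta(T)$ in the limit, which is where Theorem~\ref{th:manyt} and the asymptotic growth of $K^{(n)}(s_n)$ enter.
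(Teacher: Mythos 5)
Your overall architecture coincides with the paper's: the decomposition $Y^{(n)}=1+\sum_{j=1}^{Q^{(n)}-1}B_j^{(n)}$, the identification (by exchangeability and the strong Markov property) of the merging blocks with blocks containing tagged individuals at time $(T_1^{(n)})_-$, the asymptotic independence of $Q^{(n)}$, $T_1^{(n)}$ and the block sizes, and the final convolution are exactly the paper's steps. The problem is that the analytic heart of the argument --- that the size of the block containing a tagged lineage at time $tn^{1-\alpha}$ converges to $\beta(t)$, jointly and asymptotically independently over finitely many tags --- is asserted rather than proved, and the heuristic you offer for it does not reproduce the law \reff{eq:beta}. Writing $\theta=(t/(\alpha\Gamma(\alpha)))^{1/(\alpha-1)}$, your description (``conditionally on the mass $X\sim\mu$, the number of lineages in the tagged block is Poisson$(X\theta)$'') would give $\int e^{-x\theta}(x\theta)^k/k!\,\mu(dx)$, whereas \reff{eq:beta} equals $\int e^{-x\theta}\,\frac{(x\theta)^{k-1}}{(k-1)!}\,x\,\mu(dx)$: the tagged lineage performs a \emph{size-biased} pick of a block (it lands in an interval of the paintbox with probability proportional to that interval's frequency) and then contributes $1$ plus a Poisson number of further lineages. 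Controlling this size-biased frequency is precisely the content of the paper's Proposition~\ref{pr:mainprop}, which shows that the conditional law of $t^{-1/(\alpha-1)}\varsigma(t)$ given $\Theta(t)$ converges a.s.\ to the size-biased measure $y\,\mu(dy)$ (suitably rescaled), using the a.s.\ small-time convergence of the ranked coalescent from Theorem 1.4 of \cite{BBS08}; Theorem~\ref{size1t} then Poissonizes the multinomial paintbox counts via Corollary~\ref{fcor}. Without this lemma, or a CSBP computation carried out at the same level of detail, the key limit is not established.

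A second, smaller gap concerns the substitution of the random time. The block sizes are evaluated at $(T_1^{(n)})_-$, not at a deterministic time, so before you can replace $t$ by $T$ in $\beta(\cdot)$ you need the asymptotic independence of $T_1^{(n)}$ from the coalescent restricted to $\{2,\dots,n\}$; the paper imports this as Lemma~\ref{proba} from the proof of Theorem 5.2 of \cite{DFSY}. Theorem~\ref{th:manyt} together with $K^{(n)}((T_1^{(n)})_-)\to\infty$ does give the asymptotic independence of $Q^{(n)}$ from everything else (since $Q^{(n)}$ is a function of the block count alone), but it does not by itself decouple $T_1^{(n)}$ from the block sizes, which is the point you label ``secondary'' but leave unargued. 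Your observation that sampling $Q^{(n)}-1$ blocks without replacement from a diverging number of blocks is asymptotically equivalent to i.i.d.\ sampling is correct and corresponds to the paper's use of exchangeability plus the $k$-tag version of Theorem~\ref{size1t}.
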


Next, we establish a close relation between the random variable $Q$ and the family $(\beta(t),t\geq 0)$.
Notice that $\displaystyle \lim_{t\to0+}\mathbb{P}(\beta(t)=1)=1$.
%and we will show that $\mathbb{P}(\beta(t)=k)$ for $k\geq 2$ is proportional to $t$ when $t$ goes to $0+$. 
\begin{prop}\label{2qk}$1)$ For any $k\geq 2$, 
\begin{equation}\label{qk=}q_k=(\alpha-1)\Gamma(\alpha)\lim_{t\to0+}\frac{\mathbb{P}(\beta(t)=k)}{t}.\end{equation}
$2)$ The Laplace transform of $Q$ is 
\begin{equation}\label{lq}\mathbb{E}[e^{-\lambda Q}]=\lim_{t\to 0+}\mathbb{E}[(\alpha-1)\Gamma(\alpha)\frac{e^{-\lambda \beta(t)}\ind_{\beta(t)\geq 2}}{t}]=e^{-\lambda}\Big(1-(1-e^{-\lambda})^{\alpha-1}\Big)\end{equation}
for any $\lambda\geq 0$.
\end{prop}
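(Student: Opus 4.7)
The plan is to reduce both parts to asymptotic expansions of $\mathcal{L}_\mu$ near the origin, starting from the closed form \reff{eq:mulap}. I would first set $s=(t/(\alpha\Gamma(\alpha)))^{1/(\alpha-1)}$, so $t=\alpha\Gamma(\alpha)s^{\alpha-1}$ and $t\to 0+$ is equivalent to $s\to 0+$. Rewriting \reff{eq:beta} gives
$$\mathbb{P}(\beta(t)=k)=\frac{s^{k-1}}{\Gamma(k)}\int_0^\infty e^{-xs}x^k\mu(dx)=\frac{(-1)^k s^{k-1}}{\Gamma(k)}\mathcal{L}_\mu^{(k)}(s).$$
From \reff{eq:mulap}, factoring $\lambda^{1-\alpha}$ out of $(1+\lambda^{1-\alpha})^{-1/(\alpha-1)}$ and expanding in positive powers of $\lambda^{\alpha-1}$ yields
$$\mathcal{L}_\mu(s)=1-s+\frac{s^\alpha}{\alpha-1}+\sum_{n\geq 2}c_n s^{1+n(\alpha-1)},\qquad s\to 0+,$$
with an analogous expansion for each derivative on $(0,\infty)$ by smoothness; since $\alpha>1$, the remainder after $k$-fold differentiation is $o(s^{\alpha-k})$.

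For part 1), differentiating $k$ times kills the constant and the $-s$ term for $k\geq 2$, so only the $s^\alpha/(\alpha-1)$ contribution survives at leading order:
$$\mathcal{L}_\mu^{(k)}(s)\sim \frac{\prod_{j=0}^{k-1}(\alpha-j)}{\alpha-1}\,s^{\alpha-k},\qquad s\to 0+.$$
Splitting the product as $\prod_{j=0}^{k-1}(\alpha-j)=\alpha(\alpha-1)(-1)^{k-2}(2-\alpha)(3-\alpha)\cdots(k-1-\alpha)$ and identifying the last factor with $\Gamma(k-\alpha)/\Gamma(2-\alpha)$, the signs $(-1)^k(-1)^{k-2}=1$ cancel in the combination $(-1)^k\mathcal{L}_\mu^{(k)}(s)$. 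Substituting back into $\mathbb{P}(\beta(t)=k)/t$ and canceling the common factor $\alpha\Gamma(\alpha)s^{\alpha-1}$ gives $\lim_{t\to 0+}\mathbb{P}(\beta(t)=k)/t = \Gamma(k-\alpha)/(\Gamma(\alpha)\Gamma(k)\Gamma(2-\alpha))$, which after multiplying by $(\alpha-1)\Gamma(\alpha)$ is exactly $q_k$ as defined in \reff{pbn}.

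For part 2), I first evaluate $\mathbb{E}[e^{-\lambda Q}]=\sum_{k\geq 2}q_k e^{-\lambda k}$ in closed form. Writing $z=e^{-\lambda}$ and using the Gamma identity of part 1) to rewrite $q_k=(-1)^k\binom{\alpha-1}{k-1}$, the generalized binomial series gives
$$\sum_{k\geq 2}q_k z^k = z\sum_{k\geq 2}(-1)^k \binom{\alpha-1}{k-1}z^{k-1} = z\bigl(1-(1-z)^{\alpha-1}\bigr),$$
which is the right-hand side of \reff{lq}. For the middle expression, summing the exponential series inside the Laplace integral yields the closed form $\mathbb{E}[e^{-\lambda\beta(t)}]=-e^{-\lambda}\mathcal{L}_\mu'(s(1-e^{-\lambda}))$, hence after subtracting the $k=1$ contribution $e^{-\lambda}\mathbb{P}(\beta(t)=1)=-e^{-\lambda}\mathcal{L}_\mu'(s)$,
$$(\alpha-1)\Gamma(\alpha)\,\mathbb{E}\!\left[\frac{e^{-\lambda\beta(t)}\mathbf{1}_{\beta(t)\geq 2}}{t}\right] =\frac{(\alpha-1)\Gamma(\alpha)\,e^{-\lambda}[\mathcal{L}_\mu'(s)-\mathcal{L}_\mu'(s(1-e^{-\lambda}))]}{t}.$$
Applying the refined expansion $-\mathcal{L}_\mu'(u)=1-\frac{\alpha}{\alpha-1}u^{\alpha-1}+o(u^{\alpha-1})$ (obtained by explicit differentiation of \reff{eq:mulap}), the leading $-1$'s cancel between the two evaluations, the surviving $s^{\alpha-1}$ factor matches $t/(\alpha\Gamma(\alpha))$, and the limit evaluates to $e^{-\lambda}(1-(1-e^{-\lambda})^{\alpha-1})$, completing the chain of equalities.

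The main obstacle is bookkeeping rather than ideas: justifying that the $o(\cdot)$ remainders behave correctly under differentiation and under subtraction (so the divergent leading constants legitimately cancel in the ratio), and carefully tracking the alternating signs when converting $\prod_{j=0}^{k-1}(\alpha-j)$ into a ratio of Gamma functions. Both are handled by the explicit form \reff{eq:mulap}, which makes every expansion exact rather than merely asymptotic.
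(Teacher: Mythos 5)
Your proof is correct, and part $1)$ takes a genuinely different route from the paper's. The paper proves \reff{qk=} in ``real space'': it rewrites $\int_0^\infty e^{-xt_\alpha}x^k\mu(dx)$ by parts against the tail function $\rho_x=\mu([x,\infty))$, splits the integral at $t_\alpha^{-\eta}$, and invokes the tail asymptotic $\rho_x\sim x^{-\alpha}/\Gamma(2-\alpha)$ imported from \cite{BBS08}. You stay entirely in Laplace space: writing $\mathbb{P}(\beta(t)=k)=\frac{(-1)^k s^{k-1}}{\Gamma(k)}\cl_\mu^{(k)}(s)$ (with $s=t_\alpha$) and reading off the coefficient of $s^\alpha$ in the exact identity $\cl_\mu(s)=1-s\left(1+s^{\alpha-1}\right)^{-1/(\alpha-1)}$ avoids the external input on $\mu$ altogether and replaces the domain-splitting argument by term-by-term differentiation of a convergent power series in $s^{\alpha-1}$. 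The one point needing care, which you correctly flag, is that the tail $\sum_{n\geq 2}c_n s^{1+n(\alpha-1)}$ stays $o(s^{\alpha-k})$ after $k$ differentiations; this does hold because that tail is $s\,G(s^{\alpha-1})$ with $G$ analytic on the unit disc and $G(u)=O(u^2)$, so the differentiated series is $O(s^{1+2(\alpha-1)-k})$. Your sign bookkeeping $\prod_{j=0}^{k-1}(\alpha-j)=\alpha(\alpha-1)(-1)^{k}\Gamma(k-\alpha)/\Gamma(2-\alpha)$ checks out and reproduces $q_k$ exactly. For part $2)$ your treatment of the middle limit coincides with the paper's: both reduce $\mathbb{E}[e^{-\lambda\beta(t)}]$ to $-e^{-\lambda}\cl_\mu'\big(s(1-e^{-\lambda})\big)$, subtract the $k=1$ contribution and expand to order $s^{\alpha-1}$, whereupon the constant terms cancel. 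The only organizational difference is the first equality of \reff{lq}: the paper deduces $\sum_{k\geq 2}q_ke^{-\lambda k}=\lim_{t\to0+}\mathbb{E}[(\alpha-1)\Gamma(\alpha)e^{-\lambda\beta(t)}\ind_{\beta(t)\geq 2}/t]$ from part $1)$ by dominated convergence, whereas you verify the closed form independently by recognizing $q_k=(-1)^k\binom{\alpha-1}{k-1}$ and summing the generalized binomial series to $e^{-\lambda}\big(1-(1-e^{-\lambda})^{\alpha-1}\big)$ -- a pleasant explicit check that the paper leaves implicit. Both approaches are sound; yours buys independence from the tail estimate on Slack's measure at the cost of a slightly more delicate remainder analysis.
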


The law of $Y$ looks quite complicated, which may harm the applicability of the result. 
However the clarification given below could at some point improve the situation. 

\begin{cor}\label{yk}If $k$ tends to $\infty$, one has
$\mathbb{P}(Y>k)\sim \frac{\int_{0}^{\infty}t^{{\alpha-1}}f_T(t)dt}{((\alpha-1)\Gamma(\alpha))^{\alpha-1}\Gamma(1-(\alpha-1)^2)}k^{-(\alpha-1)^2}.$
\end{cor}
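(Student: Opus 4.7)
The plan is to read off the tail of $Y$ from the behaviour of its Laplace transform at the origin via Karamata's Tauberian theorem, exploiting the explicit probability generating function of $Q$ furnished by Proposition~\ref{2qk}. Setting $\phi(\lambda,t):=\mathbb{E}[e^{-\lambda\beta(t)}]$, I condition on $(Q,T)$ in the representation of Theorem~\ref{size1} to obtain $\mathbb{E}[e^{-\lambda Y}]=e^{-\lambda}\,\mathbb{E}[\phi(\lambda,T)^{Q-1}]$. From (\ref{lq}) the generating function of $Q$ reads $\mathbb{E}[z^{Q}]=z(1-(1-z)^{\alpha-1})$, so $\mathbb{E}[z^{Q-1}]=1-(1-z)^{\alpha-1}$; this collapses the $Q$-average and yields the identity
\begin{equation*}
1-\mathbb{E}[e^{-\lambda Y}]=(1-e^{-\lambda})+e^{-\lambda}\,\mathbb{E}\bigl[(1-\phi(\lambda,T))^{\alpha-1}\bigr].
\end{equation*}

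Next I will derive a sharp expansion of $1-\phi(\lambda,t)$ as $\lambda\to 0^+$ by viewing $\beta(t)$ as a shifted mixed Poisson. Rewriting (\ref{eq:beta}) as $\mathbb{P}(\beta(t)=k)=\int_0^\infty\frac{(sx)^{k-1}}{(k-1)!}e^{-sx}\,x\,\mu(dx)$ with $s:=(t/\alpha\Gamma(\alpha))^{1/(\alpha-1)}$ identifies $\beta(t)-1$ as a mixed Poisson of intensity $sX^{*}$, where $X^{*}$ has the size-biased Slack distribution $x\,\mu(dx)$; that this is indeed a probability follows from $\mathbb{E}_\mu[X]=-\cl_\mu'(0)=1$, immediate from (\ref{eq:mulap}). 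A direct computation gives $\mathbb{E}[e^{-\zeta X^{*}}]=-\cl_\mu'(\zeta)=(1+\zeta^{1-\alpha})^{-\alpha/(\alpha-1)}\zeta^{-\alpha}$, whose expansion at $\zeta=0^+$ yields $1-\mathbb{E}[e^{-\zeta X^{*}}]\sim\tfrac{\alpha}{\alpha-1}\zeta^{\alpha-1}$. Specialising to $\zeta=s(1-e^{-\lambda})\sim s\lambda$ and using $s^{\alpha-1}=t/(\alpha\Gamma(\alpha))$ produces
\begin{equation*}
1-\phi(\lambda,t)\sim\frac{t}{(\alpha-1)\Gamma(\alpha)}\,\lambda^{\alpha-1}\qquad(\lambda\to 0^+),
\end{equation*}
the term $1-e^{-\lambda}\sim\lambda$ being negligible compared to $\lambda^{\alpha-1}$ since $\alpha-1<1$.

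Raising to the power $\alpha-1$ and integrating against $f_T$ will, by dominated convergence, produce
\begin{equation*}
1-\mathbb{E}[e^{-\lambda Y}]\sim\frac{\mathbb{E}[T^{\alpha-1}]}{((\alpha-1)\Gamma(\alpha))^{\alpha-1}}\,\lambda^{(\alpha-1)^2},
\end{equation*}
and Karamata's Tauberian theorem at exponent $(\alpha-1)^2\in(0,1)$ will then translate this into the stated tail for $\mathbb{P}(Y>k)$, the factor $\Gamma(1-(\alpha-1)^2)^{-1}$ appearing in the standard way. The main technical hurdle will be the dominated-convergence step. Splitting the $T$-integral at $t\sim\lambda^{-(\alpha-1)}$, the bound $1-\cl_{X^{*}}(\zeta)\leq C\zeta^{\alpha-1}$ (valid on a neighbourhood of $0$) controls the main region by $(1-\phi(\lambda,t))^{\alpha-1}/\lambda^{(\alpha-1)^2}\leq C'(1+t^{\alpha-1})$, while on the complementary region $(1-\phi)^{\alpha-1}\leq 1$ combined with the tail $\mathbb{P}(T>\lambda^{-(\alpha-1)})=O(\lambda^{\alpha})$ contributes only $O(\lambda^{\alpha-(\alpha-1)^2})=o(1)$. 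The integrability $\int_0^\infty t^{\alpha-1}f_T(t)\,dt<\infty$ is ensured by the polynomial decay $f_T(t)\sim C''\,t^{-\alpha/(\alpha-1)-1}$ together with the elementary inequality $\alpha^2-3\alpha+1<0$, automatic for $\alpha\in(1,2)$.
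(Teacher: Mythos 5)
Your proposal is correct and follows essentially the same route as the paper's proof: collapse the $Q$-average using $\mathbb{E}[z^{Q-1}]=1-(1-z)^{\alpha-1}$ from (\ref{lq}), expand $1-\mathbb{E}[e^{-\lambda Y}]\sim ((\alpha-1)\Gamma(\alpha))^{-(\alpha-1)}\mathbb{E}[T^{\alpha-1}]\lambda^{(\alpha-1)^2}$ as $\lambda\to0^+$, and invoke the Karamata Tauberian theorem (Theorem 8.1.6 of \cite{B89}). The only differences are cosmetic: you justify the expansion of $1-\mathbb{E}[e^{-\lambda\beta(t)}]$ via the size-biased mixed-Poisson representation and the clean identity $-\cl_\mu'(\zeta)=(1+\zeta^{\alpha-1})^{-\alpha/(\alpha-1)}$ (which even yields a global domination bound), whereas the paper manipulates the explicit formula (\ref{lbt}) directly and splits the $T$-integral at $T_\alpha\leq\lambda^{-1/2}$.
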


If $\alpha$ goes to $1$, $k^{-(\alpha-1)^2}$ goes to $1$. This is consistent with the Bolthausen-Sznitman case where $Y=\infty$ almost surely. If $\alpha$ tends to $2$, $k^{-(\alpha-1)^2}$ goes to $k^{-1}$. This is in fact not consistent with the law of $Y$ in the Kingman case. The corollary reveals some kind of ``discontinuity" 
{between the Beta-coalescent and the Kingman coalescent. }

The size of the block containing {one} specific integer evolves in an increasing way at different speed. 
%Let {$(T_1^{(n)})_-$} denote the time just before $T_1^{(n)}$. 
It is clear that at time $(T_1^{(n)})_-$ the block containing 1 is still of size 1 while other blocks could have grown quite a lot.
One way to measure this speed is to consider the size of the largest block at time $T_1^{(n)}$. 
We denote this variable by $\tilde{W}^{(n)}$.  
The bigger $\tilde{W}^{(n)}$ is, the more inhomogeneous the speed is. 
To study $\tilde{W}^{(n)}$, we first consider the size of the largest block at any time $t$, denoted by
 $W^{(n)}(t)$. In this way, we have

{$$\tilde W^{(n)}=W^{(n)}(T_1^{(n)}).$$}
\begin{theo}\label{max}
Consider a Beta $n$-coalescent with $1<\alpha<2$, 
\begin{equation}
\frac{W^{(n)}((\alpha-1)\alpha\Gamma(\alpha)n^{1-\alpha}t)}{n^{\frac{1}{\alpha}}}\stackrel{(d)}{\longrightarrow }W(t),
\end{equation}where $W(t)$ is a positive random variable with a type-2 Gumbel law, i.e., for any $x\geq 0,$ $\displaystyle \mathbb{P}(W(t)\leq x)=e^{-x^{-{\alpha}}\frac{(\alpha-1)t}{\Gamma(2-\alpha)}}$.

\end{theo}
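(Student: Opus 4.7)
The natural approach is via the Poisson point process (PPP) construction of the coalescent: $\Pi^{(n)}$ is driven by a PPP $\mathcal{N}=\sum_i\delta_{(s_i,x_i,\mathbf U_i)}$ on $\R_+\times(0,1]\times[0,1]^{\N}$ with intensity $ds\otimes x^{-2}\Lambda(dx)\otimes\prod_j du_j$; at each atom the $j$-th block at time $s_i^-$ is independently selected iff $u_{i,j}\le x_i$, and all selected blocks coalesce. Set $c_\alpha=(\alpha-1)\alpha\Gamma(\alpha)$ and $a_n=n^{-(\alpha-1)/\alpha}$, so that $na_n=n^{1/\alpha}$ is the block-size scaling and $c_\alpha n^{1-\alpha}t$ is the time horizon. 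The change of variables $y=x/a_n$ in the Lévy intensity, together with $\Lambda(dx)=\frac{x^{1-\alpha}(1-x)^{\alpha-1}}{B(2-\alpha,\alpha)}dx$ and $B(2-\alpha,\alpha)=\Gamma(2-\alpha)\Gamma(\alpha)$, shows that the point measure of rescaled masses $\{x_i/a_n:s_i\le c_\alpha n^{1-\alpha}t\}$ converges weakly (on sets bounded away from $0$) to a PPP on $(0,\infty)$ with intensity $\frac{(\alpha-1)\alpha t}{\Gamma(2-\alpha)}y^{-1-\alpha}dy$.

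The core observation is that, since $\sum_j|B_j|=n$, a merger with mass $x$ produces a block whose size is a Bernoulli $x$-subsum of $(|B_j|)_j$, with mean $xn$ and variance $x(1-x)\sum_j|B_j|^2$. Using Theorem \ref{th:manyt} and exchangeability, the expected number of surviving singletons at time $c_\alpha n^{1-\alpha}t$ is of order $n$, so $\sum_j|B_j|^2=O(n)$ with high probability. Therefore an event of mass $x=y\,a_n$ produces, at its creation, a block of size $yn^{1/\alpha}(1+o_{\P}(1))$.

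It then remains to show that $W^{(n)}(c_\alpha n^{1-\alpha}t)$ coincides, up to $o_{\P}(n^{1/\alpha})$, with the block born from the event of largest mass $X_*$ in $[0,c_\alpha n^{1-\alpha}t]$. Two points must be checked. First, blocks born from all other big events have strictly smaller initial size $y_in^{1/\alpha}<y_*n^{1/\alpha}$. Second, the dominant block may grow by being reselected at later events; at each such event of mass $x_i$ the expected mass absorbed is at most $x_i^2\,n$, so the total expected additional mass is bounded by $n\,\E[\sum_i x_i^2]$. Direct integration against $x^{-2}\Lambda(dx)$ gives
\[n\,\E\Bigl[\sum_i x_i^2\Bigr]=n\cdot c_\alpha n^{1-\alpha}t\cdot\int_0^1\Lambda(dx)=c_\alpha\,t\,n^{2-\alpha}=o(n^{1/\alpha}),\]
since $(\alpha-1)^2>0$ implies $2-\alpha<1/\alpha$ for $\alpha\in(1,2)$. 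Combining the two points, $W^{(n)}(c_\alpha n^{1-\alpha}t)/n^{1/\alpha}$ converges in distribution to the largest atom $Y_*$ of the limit PPP; a direct integration of the tail gives $\P(Y_*\le y)=\exp\bigl(-\frac{(\alpha-1)t}{\Gamma(2-\alpha)\,y^{\alpha}}\bigr)$, as announced.

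\emph{Main obstacle.} The delicate step is to make the control of ``small'' events uniform. Concretely, one would truncate the driving PPP at masses $x>\eta a_n$, prove the Fréchet limit for the truncated coalescent for each fixed $\eta>0$, and then let $\eta\to0$ using a second-moment bound on the cumulative contribution of events with $x<\eta a_n$. The interplay between the dominant block being possibly reselected at many subsequent small events, the growth of $\sum_j|B_j|^2$ through these small events, and the decay of the block-counting process $K^{(n)}$ over the time window is where most of the technical work lies.
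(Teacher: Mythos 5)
Your route (extracting the largest reproduction event from the driving Poisson point process and arguing that it creates the largest block) is genuinely different from the paper's, which works statically: by Lemma \ref{cle1}--\ref{cle2} the ranked frequencies at time $t_n$ are the normalized atoms of $M_{R^{-1}(t_n)}$, i.e.\ a Poisson($\gamma$) number of i.i.d.\ $\mu$-variables, so after Poissonization the block sizes are conditionally independent mixed-Poisson counts and $W^{(n)}$ is a plain extreme-value maximum (Lemma \ref{lemtec}), with only the time change $R^{-1}$ requiring a transfer argument (Lemma \ref{abi}). Your normalizations are consistent with the stated limit, but two problems remain. First, the claim $\sum_j|B_j|^2=O(n)$ with high probability is false: block sizes at time $c_\alpha n^{1-\alpha}t$ have a tail of index $\alpha<2$ (the largest block alone is of order $n^{1/\alpha}$), so $\sum_j|B_j|^2\asymp n^{2/\alpha}\gg n$; the justification you give (order $n$ surviving singletons) is a lower bound and says nothing about the second moment. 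The conclusion you want survives, since the Chebyshev variance $x\sum_j|B_j|^2\asymp a_n n^{2/\alpha}=n^{(3-\alpha)/\alpha}$ is still $o\bigl((xn)^2\bigr)=o(n^{2/\alpha})$ for $\alpha>1$, but as written the step is wrong and must be redone with the correct order.

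Second, and more seriously, the ``main obstacle'' you flag is not a technicality to be deferred --- it is the actual content of the theorem in this approach, and your first-moment bounds do not close it. The estimate $n\,\E[\sum_i x_i^2]=c_\alpha t\,n^{2-\alpha}=o(n^{1/\alpha})$ controls the expected mass absorbed by \emph{one tagged block} through reselection; to conclude that $W^{(n)}$ is determined by the single largest event you must rule out that \emph{some} block among order-$n$ blocks reaches size $xn^{1/\alpha}$ by accumulating many sub-threshold events, and a union bound over $n$ blocks destroys the $o(n^{1/\alpha})$ margin. You would need either a maximal/exponential inequality uniform over blocks, or a comparison with the frequency process that already encodes the whole history --- which is precisely what the paper's CSBP/paintbox representation provides for free. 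As it stands the proposal is a correct heuristic with the right constants, but the upper bound on $W^{(n)}$ is not proved.
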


The methodology employed to prove the above theorem is similar to {that used in the proof of}  Proposition 1.6 in \cite{BBS08}, although there are some small differences.

The following result about $\tilde{W}^{(n)}$ happens to be a straightforward consequence of the above theorem. 

\begin{cor}\label{max1}
%The quantity $\tilde W^{(n)}=W^{(n)}(T_1^{(n)})$ gives the size of the largest block when $\{1\}$ is coalesced.
As $n$ tends to $\infty$, 
\begin{equation}
\frac{\tilde W^{(n)}}{n^{\frac{1}{\alpha}}}\stackrel{(d)}{\longrightarrow }\tilde W,
\end{equation}
where $\tilde{W}$ is a positive random variable such that for any $x\geq0,$ 
$$\mathbb{P}(\tilde W\leq x)=\int_0^{\infty}e^{-x^{-\alpha}{\frac{t}{\alpha\Gamma(\alpha)\Gamma(2-\alpha)}}}f_T(t)dt.$$
\end{cor}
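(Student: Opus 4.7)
The plan is to write $\tilde W^{(n)}=W^{(n)}(T_1^{(n)})$ and combine Theorem \ref{max} with the known limit $n^{\alpha-1}T_1^{(n)}\overset{(d)}{\to}T$ recalled in the introduction, by a joint-convergence and continuous-mapping argument. On the time scale of Theorem \ref{max}, set
$$\bar W^{(n)}(s):=\frac{W^{(n)}\bigl((\alpha-1)\alpha\Gamma(\alpha)n^{1-\alpha}s\bigr)}{n^{1/\alpha}},\qquad \bar T^{(n)}:=\frac{T_1^{(n)}}{(\alpha-1)\alpha\Gamma(\alpha)n^{1-\alpha}},$$
so that $\tilde W^{(n)}/n^{1/\alpha}=\bar W^{(n)}(\bar T^{(n)})$. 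Marginally, Theorem \ref{max} gives $\bar W^{(n)}(s)\overset{(d)}{\to}W(s)$ for each fixed $s$, and the bullet list in the introduction gives $\bar T^{(n)}\overset{(d)}{\to}\bar T:=T/((\alpha-1)\alpha\Gamma(\alpha))$.

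The key step is joint weak convergence of $(\bar W^{(n)}(\cdot),\bar T^{(n)})$ to $(W(\cdot),\bar T)$ with the two limits \emph{independent}. Heuristically, at times of order $n^{1-\alpha}$ the probability that any given merger involves the particular singleton $\{1\}$ is of order $1/n$, so conditioning on the coalescence time of $\{1\}$ should leave the distribution of the sizes of the remaining blocks, and in particular the largest one, asymptotically unchanged. Formally I would compare the largest block in the Beta $n$-coalescent with the largest block in the Beta $(n-1)$-coalescent obtained by removing the individual $1$ (using the consistency property recalled in \eqref{lambda} and following text): the latter is independent of $T_1^{(n)}$ by exchangeability, its largest block at time $(\alpha-1)\alpha\Gamma(\alpha)n^{1-\alpha}s$ still converges to $W(s)$ by Theorem \ref{max}, and the effect of re-inserting $\{1\}$ is at most to add a block of size one plus possibly a single extra merger, which is negligible after rescaling by $n^{1/\alpha}$.

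Once the joint convergence with independence is in place, continuous mapping (the evaluation map $(w,s)\mapsto w(s)$ being a.s.\ continuous at $(W,\bar T)$ by the continuity in law of $W$) yields $\tilde W^{(n)}/n^{1/\alpha}\overset{(d)}{\to}W(\bar T)$. Conditioning on $\bar T$ and integrating,
\begin{align*}
\mathbb{P}\bigl(W(\bar T)\leq x\bigr)
&=\mathbb{E}\Big[\exp\!\Big(-x^{-\alpha}\frac{(\alpha-1)\bar T}{\Gamma(2-\alpha)}\Big)\Big]\\
&=\int_0^\infty\exp\!\Big(-x^{-\alpha}\frac{t}{\alpha\Gamma(\alpha)\Gamma(2-\alpha)}\Big)f_T(t)\,dt
\end{align*}
after the change of variables $t=(\alpha-1)\alpha\Gamma(\alpha)s$, which is precisely the formula of Corollary \ref{max1}.

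The main obstacle is the asymptotic independence in the second step: marginal convergence of $\bar W^{(n)}(s)$ and $\bar T^{(n)}$ does not by itself allow substitution of the random time $\bar T^{(n)}$ into $\bar W^{(n)}$. Any of the following routes should work: (i) the coupling with the Beta $(n-1)$-coalescent described above, (ii) a paintbox/Poisson representation argument showing that the external branch of $\{1\}$ depends on a vanishing fraction of the mergers contributing to $W^{(n)}$, or (iii) a direct computation of the Laplace transform of $\bigl(\bar W^{(n)}(s),\bar T^{(n)}\bigr)$ mimicking the proof of Theorem \ref{max}. The remaining steps are routine conditioning and a change of variables.
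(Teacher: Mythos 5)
Your overall strategy is exactly the one the paper uses: Corollary \ref{max1} is stated there as a combination of Theorem \ref{max} with the asymptotic independence of $T_1^{(n)}$ from the rest of the coalescent, followed by evaluation at the rescaled random time; your change of variables and the resulting formula for $\mathbb{P}(\tilde W\le x)$ are correct. However, the one concrete justification you offer for the crucial independence step is wrong: the restricted coalescent $\Pi^{(2,n)}$ obtained by deleting individual $1$ is \emph{not} independent of $T_1^{(n)}$, and exchangeability does not give this. Conditionally on $\Pi^{(2,n)}$, the rate at which the singleton $\{1\}$ is absorbed at time $t$ depends on the current number of blocks of $\Pi^{(2,n)}(t)$, so $T_1^{(n)}$ and $\Pi^{(2,n)}$ are genuinely dependent for every finite $n$. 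What is true --- and what the paper actually invokes --- is the \emph{asymptotic} independence supplied by Lemma \ref{proba} (imported from the proof of Theorem 5.2 of \cite{DFSY}): the conditional law of $n^{\alpha-1}T_1^{(n)}$ given $\Pi^{(2,n)}$ converges in probability to the deterministic limit law of $T$. This is a nontrivial input, not a consequence of exchangeability; your route (i) fails as stated without it, and routes (ii) and (iii) would essentially amount to reproving it.

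Two smaller points. First, your comparison between the largest block of $\Pi^{(n)}(T_1^{(n)})$ and that of $\Pi^{(2,n)}(T_1^{(n)})$ is fine: by consistency the block of $1$ restricted to $\{2,\dots,n\}$ is a block of $\Pi^{(2,n)}(T_1^{(n)})$, so the two maxima differ by at most one, which is negligible after division by $n^{1/\alpha}$. Second, your continuous-mapping step is not well posed as written, because Theorem \ref{max} only provides the one-dimensional marginal laws $W(t)$ (like $\beta(t)$, a one-parameter family of laws, not a process), so there is no path of $W(\cdot)$ whose a.s.\ continuity at $\bar T$ one could invoke. The clean substitute is monotonicity: $t\mapsto W^{(n)}(t)$ is non-decreasing, so on the event $\{s_-\le \bar T^{(n)}\le s_+\}$ one can sandwich $W^{(n)}(T_1^{(n)})$ between the values at the deterministic times $s_\pm$, apply Theorem \ref{max} together with Lemma \ref{proba} at those deterministic times, and use that $s\mapsto\mathbb{P}(W(s)\le x)$ is continuous before letting $s_+-s_-\to0$. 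With these two repairs your argument coincides with the paper's.
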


This paper is organized as follows. In Section 2, we study external branch lengths and the block-counting process in small time and prove Theorem \ref{th:manyt}.
In Section 3, we focus on the way of coalescing an external branch and prove Theorem \ref{alpha-1}, Proposition \ref{dust}, Theorem \ref{size1}, {Proposition \ref{2qk} and Corollary \ref{yk}}.
Section 4 is devoted to the size of the largest block and Theorem \ref{max} and {Corollary \ref{max1} are} proved.

\section{{External branch lengths}}
\subsection{Ranked $\Lambda$-coalescent}
Assume from now on that $1<\alpha<2$. Let $\Pi=(\Pi(t), t\geq 0)$ be the Beta-coalescent {and denote by $K=(K(t),t> 0)$ the block-counting process of $\Pi$, i.e.,
$K(t)$ stands for the number of blocks of $\Pi(t)$.}
 It is known that $\Pi$ is {coming down} from infinity: for any {$t> 0$}, $K(t)$ is finite almost surely (\cite{Sch00de}).  
Recall that for any $t\geq 0$, $\Pi(t)$ is an exchangeable random partition of $\mathbb{N}$. 
Applying {Kingman's paintbox theorem} on exchangeable random partitions (\cite{Kin82a}), {almost surely, for every block $B\in \Pi(t)$, there exists the following limit which is called the asymptotic frequency of $B$:
$$\lim_{m\to \infty}\frac{1}{m}\sum_{i=1}^{m}\ind_{i\in B}.$$
 Furthermore, {when $t>0$,} the sum of all asymptotic frequencies equals 1 (\cite{Pit99}).When $t=0$, every block is a singleton and hence has the asymptotic frequency $0$.  Pitman (\cite{Pit99}) shows that {almost surely  for all $t\geq 0$}, every block in $\Pi(t)$ has the asymptotic frequency. Hence if $t>0$, one can reorder all the asymptotic frequencies in a non-increasing way to define a sequence $\Theta(t)=\{{\theta_1(t),\theta_2(t),\cdots, \theta_{K(t)} (t)}\}$ where ${\theta_1(t)\geq \theta_2(t)\geq \cdots \geq \theta_{K(t)}(t)}$ and ${\sum_{i=1}^{K(t)}\theta_i(t)=1}$. {At time $t=0$, since every block has asymptotic frequency $0$, one can naturally set $\Theta(0)=\{0,0,\cdots\}$}. Then the process $\Theta=(\Theta(t),t\geq 0)$ is well defined and called the {\it ranked $\Lambda$-coalescent}.

Given $\Theta(t)$ with $t>0$, one can recover the distribution of $\Pi(t)$ using again {Kingman's paintbox theorem}.  Let us at first divide $[0,1]$ into $K(t)$ intervals such that the lengths of intervals correspond one to one to the elements of $\Theta(t)$. Then we throw individuals $1,2,\cdots$ uniformly and independently into $[0,1]$. Finally, all individuals within one interval form a block and this procedure provides a random exchangeable partition 
which has the same law as $\Pi(t)$.
It is of course possible, thanks to the consistency property, to build the restricted partition $\Pi^{(n)}(t)$ using the same procedure 
{but  throwing nothing but $n$ particles instead of an infinity.}{This construction will be the key point of our proofs.}

\subsection{Properties of the ranked $\Lambda$-coalescent}
 Let $K(t,x):=\#\{i: \theta_i(t)\leq x\}$ for any $x\in[ 0,1]$. Let $\varsigma (t)$ be a 
size-biased picking of $\Theta(t)$, i.e., $\varsigma(t)$ is a discrete random variable such that {\begin{equation}
\label{eq:sigma}
\mathbb{P}(\varsigma (t)=\theta_i(t)|\Theta(t))=\theta_i(t)\times\#\{j: \theta_j(t)=\theta_i(t), \, 1\leq j\leq K(t)\}, \,\, 1\leq i\leq K(t).
\end{equation} 
One can construct or regard $\varsigma(t)$ in the following way: Suppose that $[0,1]$ is divided into $K(t)$ intervals whose lengths are in one-to-one correspondence to  the elements of $\Theta(t)$. We throw a particle uniformly and independently over $[0,1]$ and {$\varsigma(t)$ is} the length of the interval containing this particle.  }

Recall the measure $\mu$ defined in (\ref{eq:mulap}).
{It is easy to check that 
\begin{equation}
\label{eq:Emu}
\int_0^{\infty} y\mu(dy)=\frac{d\cl_\mu(\lambda)}{d\lambda}|_{\lambda=0}=1.
\end{equation}}
\begin{prop}\label{pr:mainprop} 
We have
\begin{equation}\label{xitx}\displaystyle \lim_{t\longrightarrow 0+}\sup_{x\geq 0}\left|\mathbb{P}(\varsigma (t)\leq t^{\frac{1}{\alpha-1}}x|\Theta(t))-\int_0^{x(\alpha\Gamma(\alpha))^{\frac{1}{\alpha-1}}}y\mu(dy)\right|=0, a.s.\end{equation}
\end{prop}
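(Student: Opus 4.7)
The plan is to combine a mean-convergence argument, based on the connection between the Beta-coalescent and $\alpha$-stable continuous-state branching processes (CSBPs), with a variance bound coming from small-time decorrelation of two tagged lineages, and then to upgrade the resulting $L^2$ convergence to uniform almost sure convergence via monotonicity in $x$ and continuity of the limit. Setting
\[
F_t(x):=\mathbb{P}\bigl(\varsigma(t)\leq t^{1/(\alpha-1)}x\bigm|\Theta(t)\bigr)=\sum_i\theta_i(t)\,\ind_{\{\theta_i(t)\leq t^{1/(\alpha-1)}x\}},
\]
both $F_t$ and the target $F_\infty(x):=\int_0^{x(\alpha\Gamma(\alpha))^{1/(\alpha-1)}}y\,\mu(dy)$ are nondecreasing in $x\geq 0$, and $F_\infty$ is continuous with $F_\infty(\infty)=1$ by \eqref{eq:Emu}; a standard Glivenko--Cantelli-type (Dini) argument then reduces the claimed uniform a.s.\ convergence to a.s.\ pointwise convergence on a countable dense set of $x$.

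For the mean convergence, $\mathbb{E}[F_t(x)]$ equals, by Kingman's paintbox and exchangeability, the probability that the asymptotic frequency of the block containing the integer $1$ in $\Pi(t)$ is at most $t^{1/(\alpha-1)}x$. The classical embedding of the Beta$(2-\alpha,\alpha)$-coalescent into an $\alpha$-stable CSBP with branching mechanism proportional to $\lambda^\alpha$ identifies this frequency with a size-biased normalized family size in the CSBP at an appropriate time. Slack's limit theorem for the $\alpha$-stable CSBP then yields that $t^{-1/(\alpha-1)}$ times the raw family size converges in law with Laplace transform $\cl_\mu$ from \eqref{eq:mulap}, the scale factor $(\alpha\Gamma(\alpha))^{1/(\alpha-1)}$ arising from the precise form of the branching mechanism; size-biasing then produces the factor $y\,\mu(dy)$ in $F_\infty$.

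To upgrade mean convergence to a.s.\ convergence, I would bound $\Var(F_t(x))$ by splitting $\mathbb{E}[F_t(x)^2]=\sum_{i,j}\mathbb{E}[\theta_i(t)\theta_j(t)\ind_{\{\theta_i(t),\,\theta_j(t)\leq t^{1/(\alpha-1)}x\}}]$ into diagonal and off-diagonal parts. The diagonal is dominated by $\mathbb{E}[\sum_i\theta_i(t)^2]$, the probability that two tagged integers lie in the same block of $\Pi(t)$, which vanishes as $t\downarrow 0$ by coming-down-from-infinity. The off-diagonal is the probability that the blocks of two tagged integers both have frequency $\leq t^{1/(\alpha-1)}x$, and factorizes asymptotically as $\mathbb{E}[F_t(x)]^2$ by small-time decorrelation of the two lineages. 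Hence $\Var(F_t(x))\to 0$, and Borel--Cantelli along a sufficiently rapidly decreasing deterministic sequence $t_n\downarrow 0$ yields a.s.\ pointwise convergence at each $x$ in a countable dense set; the piecewise-constant-in-$t$ structure of $\Theta(\cdot)$ between coalescence events then extends this to all $t\downarrow 0$.

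The main obstacle is obtaining variance decay at a summable rate along $t_n$, which amounts to quantifying sharply the small-time decorrelation of two tagged lineages. This will rely on the Poisson point process description of the coalescence events together with the small-time asymptotics of $K(t)$, and is the technical heart of the proof.
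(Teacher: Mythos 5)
Your plan is a genuinely different route from the paper's, and as written it has real gaps. The paper's proof is pathwise and contains no moment computation at all: it takes as input the almost sure \emph{uniform} convergence of the rescaled counting function $x\mapsto t^{\frac{1}{\alpha-1}}K(t,t^{\frac{1}{\alpha-1}}x)$ to $(\alpha\Gamma(\alpha))^{\frac{1}{\alpha-1}}\mu([0,x(\alpha\Gamma(\alpha))^{\frac{1}{\alpha-1}}))$ (Theorem 1.4 of \cite{BBS08}, the quantity $S_t$), and converts it, $\omega$ by $\omega$, into convergence of the size-biased distribution function by sandwiching $\sum_i\theta_i(t)\ind_{\{\theta_i(t)\leq t^{1/(\alpha-1)}x\}}$ between two Riemann sums over a partition of $[0,x]$ into $n$ pieces, then letting $n\to\infty$. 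All the probabilistic difficulty (including the identification of $\mu$ via the CSBP embedding and Slack's theorem, which you re-sketch for the mean) is already packaged in that cited result, so no decorrelation estimate and no Borel--Cantelli argument are needed. Your proposal instead tries to manufacture the a.s.\ statement from an $L^2$ one, which is precisely where it stops short.

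Two concrete gaps. First, the off-diagonal factorization $\sum_{i\neq j}\mathbb{E}[\theta_i(t)\theta_j(t)\ind\,\ind]\approx\mathbb{E}[F_t(x)]^2$ must come with a rate that is summable along your sequence $t_n$, uniformly enough in $x$ on the countable dense set; you supply no such estimate and yourself flag it as the technical heart, so the proof is not complete. Second, the interpolation from the sequence $t_n$ to all $t\to 0+$ is justified by the ``piecewise-constant-in-$t$ structure of $\Theta(\cdot)$ between coalescence events,'' but this fails near $0$: the Beta-coalescent comes down from infinity, so every interval $(0,\varepsilon)$ contains infinitely many coalescence times and $\Theta$ is not constant on $[t_{n+1},t_n]$ for large $n$. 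The interpolation can be repaired — note that $G_t(u):=\sum_i\theta_i(t)\ind_{\{\theta_i(t)\leq u\}}$ is nondecreasing in $u$ and nonincreasing in $t$ (a merger can only remove mass from below a fixed level $u$), whence for $t\in[t_{n+1},t_n]$ one gets $F_{t_n}\bigl(x(t_{n+1}/t_n)^{\frac{1}{\alpha-1}}\bigr)\leq F_t(x)\leq F_{t_{n+1}}\bigl(x(t_n/t_{n+1})^{\frac{1}{\alpha-1}}\bigr)$, so one needs $t_n/t_{n+1}\to 1$ \emph{and} summable variances simultaneously — but that argument is absent from your write-up. Your reduction of the supremum over $x$ to pointwise convergence on a dense set, via monotonicity in $x$ and continuity of the limit, is correct and is implicitly also how the paper concludes.
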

\begin{proof}
%Notice that (\ref{mulap}) gives 
%$$\frac{d}{d\lambda}\int_0^{\infty}e^{-\lambda x}\mu(dx)=\int_0^{\infty}-e^{-\lambda x}x\mu(dx)=-\lambda^{-\alpha}(1+\lambda^{1-\alpha})^{-\frac{\alpha}{\alpha-1}}.$$
%Let  $\lambda$ tend to $0$, we get (\ref{xmu}).
In order to simplify the notations, let us {denote} 
$f(t,x)=\mathbb{P}(\varsigma (t)\leq t^{\frac{1}{\alpha-1}}x|\Theta(t))$ and
$f(x)=\int_0^{x(
\alpha\Gamma(\alpha))^{\frac{1}{\alpha-1}}}
y\mu(dy)$.
Let 
$$\displaystyle S_t=\sup_{x\geq0 }\left|t^{\frac{1}{\alpha-1}}K\left(t, t^{\frac{1}{\alpha-1}}x\right)
-\left(\alpha\Gamma(\alpha)\right)^{\frac{1}{\alpha-1}}
\mu\left([0,
x(\alpha\Gamma(\alpha))
^{\frac{1}{\alpha-1}})\right)\right|.$$
It is shown in Theorem 1.4 of \cite{BBS08} that 
\begin{equation}\label{eq:BBS08}
\lim_{t\longrightarrow 0+}S_t=0, \quad a.s.
\end{equation}
Observe that 
 
 \begin{align}\label{decom}
f(t,x)&=\sum_{i=0}^{K(t)}\theta_i(t)
 \ind_{\{\theta_i(t)\leq t^{\frac{1}{\alpha-1}}x\}}\nonumber\\
 &=\sum_{j=0}^{n-1}\sum_{i=0}^{K(t)}\theta_i(t)
 \ind_{\{t^{\frac{1}{\alpha-1}}\frac{jx}{n}<\theta_i(t)\leq t^{\frac{1}{\alpha-1}}\frac{(j+1)x}{n}\}}.
 \end{align} 
Then 
$$f(t,x)\geq I^{(n)}_1:=\sum_{j=0}^{n-1}\sum_{i=0}^{K(t)}t^{\frac{1}{\alpha-1}}\frac{jx}{n}
 \ind_{\{t^{\frac{1}{\alpha-1}}\frac{jx}{n}<\theta_i(t)\leq t^{\frac{1}{\alpha-1}}\frac{(j+1)x}{n}\}}$$
 and
 $$f(t,x)\leq I^{(n)}_2:=\sum_{j=0}^{n-1}\sum_{i=0}^{K(t)}t^{\frac{1}{\alpha-1}}\frac{(j+1)x}{n}
 \ind_{\{t^{\frac{1}{\alpha-1}}\frac{jx}{n}<\theta_i(t)\leq t^{\frac{1}{\alpha-1}}\frac{(j+1)x}{n}\}}.$$
 For $n$ fixed and applying (\ref{eq:BBS08}), one gets for $t\to 0+$
{ $$I_1^{(n)}\stackrel{a.s.}{\longrightarrow}\sum_{j=0}^{n-1}\frac{jx}{n}\left(\alpha\Gamma(\alpha)\right)^{\frac{1}{\alpha-1}}\mu\left(\Big(\frac{jx}{n}\left(\alpha\Gamma(\alpha)\right)^{\frac{1}{\alpha-1}}, \frac{(j+1)x}{n}\left(\alpha\Gamma(\alpha)\right)^{\frac{1}{\alpha-1}})\Big]\right),$$
 and
$$I_2^{(n)}\stackrel{a.s.}{\longrightarrow}\sum_{j=0}^{n-1}\frac{(j+1)x}{n}\left(\alpha\Gamma(\alpha)\right)^{\frac{1}{\alpha-1}}\mu\left(\Big(\frac{jx}{n}\left(\alpha\Gamma(\alpha)\right)^{\frac{1}{\alpha-1}}, \frac{(j+1)x}{n}\left(\alpha\Gamma(\alpha)\right)^{\frac{1}{\alpha-1}})\Big]\right).$$}
 The above two limit values converge to $f(x)$ as $n$ goes to $\infty$. Then we can conclude.\end{proof}
 It is straightforward to see that 
 \begin{cor}\label{fcor}
 For any $f\in C_b^{0}[0, \infty)$ and $c\geq 0$, $M\in \mathbb{R}_{+}\cup \{\infty\},$
 $$\displaystyle \mathbb{E}\left[f(ct^{-\frac{1}{\alpha-1}}\varsigma (t))\ind_{\{0\leq ct^{-\frac{1}{\alpha-1}}\varsigma (t)\leq M\}}|\Theta(t)\right]\stackrel{a.s.}{\longrightarrow }\int_0^{Mc^{-1}(\alpha\Gamma(\alpha))
 ^{\frac{1}{\alpha-1}}}f\left(
 c(\alpha\Gamma(\alpha))
 ^{-\frac{1}{\alpha-1}}y\right)y\mu(dy)$$
when $t\to0+$.  \end{cor}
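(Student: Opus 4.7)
The corollary is essentially a rewriting of Proposition \ref{pr:mainprop} in terms of bounded continuous test functions. My plan is to reinterpret the left-hand side as an integral against the conditional law of $Z_t := t^{-1/(\alpha-1)}\varsigma(t)$ given $\Theta(t)$, and then use uniform convergence of the conditional distribution function to pass to the limit.

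First, I would introduce $\nu$, the probability measure on $[0,\infty)$ defined by
$$\nu([0,x]) := \int_0^{x(\alpha\Gamma(\alpha))^{1/(\alpha-1)}} y\,\mu(dy), \qquad x\geq 0.$$
It is indeed a probability measure by \reff{eq:Emu}, and its distribution function is continuous because Slack's measure $\mu$ is absolutely continuous on $(0,\infty)$ (see \cite{Sla68}). With this notation, Proposition \ref{pr:mainprop} says exactly that $\sup_{x\geq 0}|\mathbb{P}(Z_t\leq x\mid \Theta(t)) - \nu([0,x])|\to 0$ almost surely as $t\to 0+$. On the full-probability event where this holds, uniform convergence of distribution functions to a continuous limit CDF implies weak convergence of the conditional laws of $Z_t$ to $\nu$.

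Assuming $c>0$, the map $u\mapsto f(cu)\ind_{[0,M/c]}(u)$ is bounded on $[0,\infty)$ with at most one discontinuity, located at $u=M/c$ in the case $M<\infty$; since $\nu$ is atomless this set is $\nu$-null. The portmanteau theorem therefore yields, on the same almost sure event,
$$\mathbb{E}\bigl[f(cZ_t)\ind_{\{0\leq cZ_t\leq M\}}\bigm|\Theta(t)\bigr]\;\longrightarrow\;\int_{[0,M/c]} f(cu)\,\nu(du).$$
The change of variables $y=u(\alpha\Gamma(\alpha))^{1/(\alpha-1)}$ then converts $\nu(du)$ into $y\,\mu(dy)$ and sends the upper integration limit to $Mc^{-1}(\alpha\Gamma(\alpha))^{1/(\alpha-1)}$, producing exactly the right-hand side claimed in the corollary. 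The degenerate case $c=0$ reduces both sides to $f(0)$ directly, thanks to \reff{eq:Emu}.

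The only mild subtlety in the argument is the absence of a $\nu$-atom at the potential boundary point $M/c$, which is why I invoke the absolute continuity of Slack's distribution; beyond that, the proof is a routine passage from uniform CDF convergence to convergence of bounded, almost-everywhere continuous functionals. No new ingredient from the coalescent theory is needed beyond Proposition \ref{pr:mainprop} itself.
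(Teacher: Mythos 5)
Your argument is correct and is exactly the standard deduction the authors leave implicit ("it is straightforward to see"): Proposition \ref{pr:mainprop} gives a.s.\ uniform convergence of the conditional distribution function of $t^{-1/(\alpha-1)}\varsigma(t)$ to the continuous CDF of $\nu$, and the portmanteau theorem applied to the bounded function $u\mapsto f(cu)\ind_{[0,M/c]}(u)$, whose only possible discontinuity $M/c$ is $\nu$-null since Slack's law is absolutely continuous, yields the claim after the change of variables. Your treatment of the degenerate case $c=0$ via \reff{eq:Emu} is also correct.
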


\subsection{External branches}
We start the proof of Theorem \ref{th:manyt} with a simpler version.
\begin{prop}\label{pr:tas}
Let $\{T_i^{(n)},{1\leq i\leq k}\}$ and $T$ be as in Theorem \ref{th:manyt}.
The following almost sure convergence holds as $n$ goes to $\infty$: 
\begin{equation}\label{equ2}
\mathbb{P}(n^{\alpha-1}T_1^{(n)}> t, n^{\alpha-1}T_2^{(n)}> t,\cdots, n^{\alpha-1}T_k^{(n)}> t|\Theta(n^{1-\alpha}t))\stackrel{a.s.}{\longrightarrow} 
\P(T>t)^k
\end{equation}
for any $t\geq 0$.
As a consequence, 
\begin{equation}\label{equ1}
n^{\alpha-1}T_1^{(n)}\stackrel{(d)}{\longrightarrow} T.
\end{equation}
\end{prop}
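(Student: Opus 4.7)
The approach is to condition on the ranked frequencies $\Theta(s)$ with $s=n^{1-\alpha}t$ and apply Kingman's paintbox construction: given $\Theta(s)$, the partition $\Pi^{(n)}(s)$ is distributed as the partition of $\{1,\ldots,n\}$ induced by $n$ i.i.d.\ uniform points $U_1,\ldots,U_n$ thrown into $[0,1]$ decomposed into intervals of lengths $\theta_1(s),\theta_2(s),\ldots$. The key observation is that $\{T_i^{(n)}>s\}$ is precisely the event that $U_i$ is the unique point among $U_1,\ldots,U_n$ to land in its interval. Writing $J_i$ for the index of the interval containing $U_i$ and $L_i=\theta_{J_i}(s)$, this gives
\begin{equation*}
\mathbb{P}\bigl(T_1^{(n)}>s,\ldots,T_k^{(n)}>s\bigm|\Theta(s)\bigr)=\mathbb{E}\Bigl[\ind_{\{J_1,\ldots,J_k\text{ pairwise distinct}\}}(1-L_1-\cdots-L_k)^{n-k}\Bigm|\Theta(s)\Bigr].
\end{equation*}

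First I would treat the case $k=1$: the right-hand side reduces to $\mathbb{E}\bigl[(1-\varsigma(s))^{n-1}\bigm|\Theta(s)\bigr]$ with $\varsigma(s)$ the size-biased pick of \eqref{eq:sigma}. Decomposing $(n-1)\varsigma(s)=\bigl((n-1)s^{1/(\alpha-1)}\bigr)\cdot\bigl(s^{-1/(\alpha-1)}\varsigma(s)\bigr)$, with the first factor tending to $t^{1/(\alpha-1)}$, I would apply Corollary \ref{fcor} with $f(x)=e^{-x}$ and $c=t^{1/(\alpha-1)}$, after verifying via the elementary bound $|(1-x)^{n-1}-e^{-(n-1)x}|\leq 2(n-1)x^{2}$ that replacing $(1-\varsigma(s))^{n-1}$ by $e^{-(n-1)\varsigma(s)}$ is a.s.\ negligible in the conditional expectation. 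This yields
\begin{equation*}
\mathbb{P}(T_1^{(n)}>s\mid\Theta(s))\stackrel{a.s.}{\longrightarrow}\int_0^\infty e^{-(t/(\alpha\Gamma(\alpha)))^{1/(\alpha-1)}y}\,y\,\mu(dy)=-\cl_\mu'\bigl((t/(\alpha\Gamma(\alpha)))^{1/(\alpha-1)}\bigr).
\end{equation*}
Differentiating \eqref{eq:mulap} gives $-\cl_\mu'(\lambda)=\lambda^{-\alpha}(1+\lambda^{1-\alpha})^{-\alpha/(\alpha-1)}$, which at $\lambda=(t/(\alpha\Gamma(\alpha)))^{1/(\alpha-1)}$ simplifies to $(1+t/(\alpha\Gamma(\alpha)))^{-\alpha/(\alpha-1)}$; a direct integration of $f_T$ in \eqref{ft} confirms that this equals $\mathbb{P}(T>t)$.

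For the joint event, conditionally on $\Theta(s)$ the indices $J_1,\ldots,J_k$ are i.i.d.\ with $\mathbb{P}(J_1=j\mid\Theta(s))=\theta_j(s)$, so the $L_l$ are conditionally i.i.d.\ copies of $\varsigma(s)$. Since $\max_j\theta_j(s)\to 0$ a.s.\ as $s\to 0$ (otherwise $\varsigma(s)$ would charge $[\epsilon,\infty)$ with mass $\geq\epsilon$ along a subsequence, contradicting the tightness of the conditional law of $s^{-1/(\alpha-1)}\varsigma(s)$ given by Proposition \ref{pr:mainprop}), the distinctness indicator tends a.s.\ to $1$ and, for fixed $k$, $(1-\sum_l L_l)^{n-k}/\prod_l(1-L_l)^{n-1}\to 1$ a.s.\ inside the conditional expectation. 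The joint conditional probability thus factors asymptotically as $\bigl(\mathbb{E}[(1-\varsigma(s))^{n-1}\mid\Theta(s)]\bigr)^k$, which converges a.s.\ to $\mathbb{P}(T>t)^k$ by the marginal step. This establishes \eqref{equ2}; taking expectations and using bounded convergence gives the unconditional joint convergence, of which \eqref{equ1} is the $k=1$ marginal.

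The main obstacle is the almost sure justification of the two approximations (exponential swap and factorisation), which both hinge on the smallness of $\max_j\theta_j(s)$ derived from Proposition \ref{pr:mainprop} together with the size-biased nature of $\varsigma(s)$. Once this is secured, the remainder is an explicit Laplace transform computation on $y\,\mu(dy)$ and a matching to the tail of $f_T$.
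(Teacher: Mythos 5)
Your route is the same as the paper's. The paintbox identity
\begin{equation*}
\P\bigl(T_1^{(n)}>n^{1-\alpha}t,\dots,T_k^{(n)}>n^{1-\alpha}t\mid\Theta(t_n)\bigr)=\sum_{i_1,\dots,i_k\ \text{distinct}}\theta_{i_1}(t_n)\cdots\theta_{i_k}(t_n)\bigl(1-\theta_{i_1}(t_n)-\cdots-\theta_{i_k}(t_n)\bigr)^{n-k},
\end{equation*}
with $t_n=n^{1-\alpha}t$, is exactly the paper's starting point (written there, for $k=2$, as $\E[(1-\varsigma(t_n)-\bar{\varsigma}(t_n))^{n-2}\mid\Theta(t_n)]$ minus the diagonal term $\E[\varsigma(t_n)(1-2\varsigma(t_n))^{n-2}\mid\Theta(t_n)]$, which is your distinctness correction); the limit is then extracted from Corollary \ref{fcor} with $f(x)=e^{-x}$, and your identification $-\cl_\mu'(\lambda)\big|_{\lambda=(t/(\alpha\Gamma(\alpha)))^{1/(\alpha-1)}}=(1+t/(\alpha\Gamma(\alpha)))^{-\alpha/(\alpha-1)}=\P(T>t)$ is correct.

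The one step that would fail as written is your justification of the exponential swap and of the factorisation, which you base on the global bound $|(1-x)^{n-1}-e^{-(n-1)x}|\le 2(n-1)x^2$ and on $\max_j\theta_j(t_n)\to0$. Under the conditional law the largest frequency is of order $n^{1/\alpha-1}$, not $o(n^{-1/2})$: the single largest term of $\E[2(n-1)\varsigma(t_n)^2\mid\Theta(t_n)]=2(n-1)\sum_i\theta_i(t_n)^3$ is already of order $n^{3/\alpha-2}$, which diverges for $\alpha<3/2$; likewise, on the event $L_1=L_2=\theta_1(t_n)$ one has $nL_1L_2\sim n^{2/\alpha-1}\to\infty$, so the ratio $(1-\sum_lL_l)^{n-k}\big/\prod_l(1-L_l)^{n-1}$ does not tend to $1$ pointwise. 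Smallness of $\max_j\theta_j$ is therefore not the right lever. What saves the argument, and what the paper does, is the truncation already built into Corollary \ref{fcor}: split according to $\{\varsigma(t_n)\le Mn^{-1}\}$ (and its copies), where the uniform convergence of $x\mapsto(1-n^{-1}x)^{n-k}$ to $e^{-x}$ on $[0,kM]$ delivers both approximations at once, and bound the complementary piece by $1-\P(\varsigma(t_n)\le Mn^{-1}\mid\Theta(t_n))^k$, which by Proposition \ref{pr:mainprop} converges a.s.\ to a quantity vanishing as $M\to\infty$ because $y\mu(dy)$ is a probability measure. With that substitution your proof coincides with the published one.
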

\begin{rem}
The convergence \eqref{equ1} has already been obtained in
\cite{DFSY} using two different methods.
\end{rem}
\begin{proof}
{For the sake of simplicity in notations, let 
$t_n=n^{1-\alpha}t$.
Let us build $\Pi^{(n)}(t)$ from $\Theta(t)$ and the paintbox construction (using $n$ particles).}
We now prove (\ref{equ2}) for $k=2$. The proof for $k> 2$ and $k=1$ follows similarly.
 Let $\bar{\varsigma}(t_n)$ be an independent copy of $\varsigma(t_n)$,
   conditionally on $\Theta(t_n)$. 
   Then, 
\begin{align}\label{t2t}
&\mathbb{P}(n^{\alpha-1}T_1^{(n)}> t, n^{\alpha-1}T_2^{(n)}> t|\Theta(t_n))\nonumber\\
=&\sum_{i,j=1, i\neq j}^{K(t_n)}\theta_i(t_n)\theta_j(t_n)\Big(1-\theta_i(t_n)-\theta_j(t_n)\Big)^{n-2}\nonumber\\
=&\sum_{i,j=1}^{K(t_n)}\theta_i(t_n)\theta_j(t_n)\Big(1-\theta_i(t_n)-\theta_j(t_n)\Big)^{n-2}- \sum_{i=1}^{K(t_n)}\theta_i(t_n)^2
\Big(1-2\theta_i(t_n)\Big)^{n-2}
\nonumber\\
=&\E[\Big(1-\varsigma(t_n)
-\bar{\varsigma}(t_n)\Big)^{n-2}|\Theta(t_n)]- \E[\varsigma(t_n)
\Big(1-2\varsigma(t_n)\Big)^{n-2}
|\Theta(t_n)],\nonumber
\end{align}
Using Corollary \ref{fcor}, the second term converges almost surely to $0$. Let $M$ be a real positive number and write the first term as 
\begin{align*}
\E[\Big(1-\varsigma(t_n)-\bar{\varsigma}(t_n)\Big)^{n-2}|\Theta(t_n)]&=I_1+I_2,\\
\end{align*}
where 
$$I_1=\E[\Big(1-\varsigma(t_n)-\bar{\varsigma}(t_n)\Big)^{n-2}\ind_{\varsigma(t_n)\leq Mn^{-1}, \bar{\varsigma}(t_n)\leq Mn^{-1}}|\Theta(t_n)],$$
$$I_2=\E[\Big(1-\varsigma(t_n)-\bar{\varsigma}(t_n)\Big)^{n-2}\ind_{\{\varsigma(t_n)\leq Mn^{-1}, \bar{\varsigma}(t_n)\leq Mn^{-1}\}^c}|\Theta(t_n)].$$
By Proposition \ref{pr:mainprop}, 
$$I_2\leq 1-\mathbb{P}(\bar{\varsigma}(t_n)\leq Mn^{-1}|\Theta(t_n)])^2\stackrel{a.s.}{\to}1-(1-\int_{Mt^{\frac{1}{1-\alpha}}}^{\infty}y\mu(dy))^2.$$
The limit value goes to $0$ as $M$ tends to $\infty.$
 For $I_1$, notice that $x\mapsto (1-n^{-1}x)^{n-2}$ converges uniformly to $x\mapsto e^{-x}$ for $0\leq x\leq 2M$ as $n$ tends to $\infty$. Then  
$$I_1-\E[\exp\left(-n\varsigma(t_n)-n\bar{\varsigma}(t_n)\right)\ind_{\varsigma(t_n)\leq Mn^{-1}, \bar{\varsigma}(t_n)\leq Mn^{-1}}|\Theta(t_n)]\stackrel{a.s.}{\to}0.$$
Now, thanks to Corollary \ref{fcor}, we get
\begin{align*}
&\E[\exp\left(-n\varsigma(t_n)-n\bar{\varsigma}(t_n)\right)\ind_{\varsigma(t_n)\leq Mn^{-1}, \bar{\varsigma}(t_n)\leq Mn^{-1}}|\Theta(t_n)]\\
=&\E[\exp\left(-n\varsigma(t_n)\right)\ind_{\varsigma(t_n)\leq Mn^{-1}}\times \exp\left(-n\bar{\varsigma}(t_n)\right)\ind_{\bar{\varsigma}(t_n)\leq Mn^{-1}}|\Theta(t_n)]\\
\stackrel{a.s.}{\to}&\left(\int_0^{M^{-1}(\frac{\alpha\Gamma(\alpha)}{t})
^{\frac{1}{\alpha-1}}}
e^{-(\frac{\alpha\Gamma(\alpha)}{t})
^{-\frac{1}{\alpha-1}}y}y\mu(dy)\right)^2\\
 \stackrel{M\to\infty}{\to}&(1+\frac{t}{\alpha\Gamma(\alpha)})^{-2\frac{\alpha}{\alpha-1}}.
\end{align*}
Then we can conclude. 
\end{proof}

\subsection{{The block-counting process in small time}}
{Recall that $K^{(n)}=(K^{(n)}(t),t> 0)$ and $K=(K(t),t> 0)$ are respectively the block-counting processes of $\Pi^{(n)}$ and $\Pi$.}
 \begin{lem}\label{suede}
 Let $t>0$ and $t_n=n^{1-\alpha}t$. We have
\begin{equation}\label{expk}
\mathbb{E}[K^{(n)}(t_n)|\Theta(t_n)]=\sum_{i=1}
^{K(t_n)}1-\Big(1-\theta_i(t_n)\Big)^n.\end{equation}
and
\begin{align}\label{vark}
\Var(K^{(n)}(t_n)|\Theta(t_n))&=\sum_{i=1}^{K(t_n)}\Big(1-\theta_i(t_n)\Big)^n\Big(1-(1-\theta_i(t_n))^n\Big)\nonumber\\
&+\sum_{i,j=1, i\neq j}^{K(t_n)}\Big(1-\theta_i(t_n)-\theta_j(t_n)\Big)^n-\Big(1-\theta_i(t_n)\Big)^n\Big(1-\theta_j(t_n)\Big)^n.
\end{align}
Furthermore, 
\begin{equation}\label{limEk}
\frac{\mathbb{E}[K^{(n)}(t_n)|\Theta(t_n)]}{n}\stackrel{a.s.}{\longrightarrow }(1+\frac{t}{\alpha\Gamma(\alpha)})
^{-\frac{1}{\alpha-1}}, \,n\to \infty.
 \end{equation}
and
 \begin{equation}\label{limVk}
\frac{Var(K^{(n)}(t_n)|\Theta(t_n))}{n}\stackrel{a.s.}{\longrightarrow }(2^{1-\alpha}+\frac{t}{\alpha\Gamma(\alpha)})^{-\frac{1}{\alpha-1}}-(1+\frac{t}{\alpha\Gamma(\alpha)})^{-\frac{1}{\alpha-1}},\,n\to \infty.
\end{equation}
\end{lem}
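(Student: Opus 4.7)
The plan has two parts: first derive the exact identities (\ref{expk})--(\ref{vark}) from Kingman's paintbox construction of $\Pi^{(n)}(t_n)$, then pass to the almost sure limits (\ref{limEk})--(\ref{limVk}) using size-biased picking together with Corollary \ref{fcor}.

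\emph{Exact formulas.} Conditional on $\Theta(t_n)$, I would realize $\Pi^{(n)}(t_n)$ by throwing $n$ i.i.d.\ uniform particles into a partition of $[0,1]$ into $K(t_n)$ intervals of lengths $\theta_i:=\theta_i(t_n)$, so that $K^{(n)}(t_n)=\sum_{i=1}^{K(t_n)}X_i$ with $X_i$ the indicator that block $i$ is non-empty. Each $X_i$ is then Bernoulli with $\P(X_i=1\mid\Theta(t_n))=1-(1-\theta_i)^n$, which gives (\ref{expk}). For the variance I would decompose $\Var(K^{(n)}(t_n)\mid\Theta(t_n))$ into diagonal and off-diagonal parts; the diagonal yields the Bernoulli variance, while inclusion--exclusion on the uniform particles gives $\P(X_i=X_j=1\mid\Theta(t_n))=1-(1-\theta_i)^n-(1-\theta_j)^n+(1-\theta_i-\theta_j)^n$, hence the covariance $(1-\theta_i-\theta_j)^n-(1-\theta_i)^n(1-\theta_j)^n$, and assembling these proves (\ref{vark}).

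\emph{Limits of the mean and of the diagonal variance.} From the size-biased definition (\ref{eq:sigma}), $\sum_i g(\theta_i)=\E[g(\varsigma(t_n))/\varsigma(t_n)\mid\Theta(t_n)]$, whence
\[
\frac{\E[K^{(n)}(t_n)\mid\Theta(t_n)]}{n}=\E\!\left[\frac{1-(1-\varsigma(t_n))^n}{n\varsigma(t_n)}\,\Big|\,\Theta(t_n)\right].
\]
On $\{n\varsigma(t_n)\leq M\}$ the integrand is uniformly close to the bounded continuous function $g(u)=(1-e^{-u})/u$ (with $g(0):=1$) evaluated at $n\varsigma(t_n)$, and its contribution on the complementary event is bounded by $1/M$. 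Corollary \ref{fcor}, applied with $c=t^{1/(\alpha-1)}$ (so that $ct_n^{-1/(\alpha-1)}=n$), then delivers the a.s.\ limit $\int_0^\infty g(sy)\,y\,\mu(dy)=s^{-1}(1-\cl_\mu(s))$ with $s=(t/(\alpha\Gamma(\alpha)))^{1/(\alpha-1)}$; using (\ref{eq:mulap}) and $s^{1-\alpha}=\alpha\Gamma(\alpha)/t$ this simplifies to (\ref{limEk}) after sending $M\to\infty$. I would apply the same machinery to the diagonal contribution $V_1/n=\E[h_n(\varsigma(t_n))\mid\Theta(t_n)]$, where $h_n(x)=[(1-x)^n-(1-x)^{2n}]/(nx)$ is uniformly close to the bounded continuous $h(u)=(e^{-u}-e^{-2u})/u$, obtaining the limit $s^{-1}(\cl_\mu(s)-\cl_\mu(2s))$, which is exactly the RHS of (\ref{limVk}).

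\emph{Off-diagonal contribution and main obstacle.} It remains to deal with $V_2/n:=\frac{1}{n}\sum_{i\neq j}[(1-\theta_i-\theta_j)^n-(1-\theta_i)^n(1-\theta_j)^n]$. Starting from the identity $(1-a)^n(1-b)^n-(1-a-b)^n=n\,ab\int_0^1(1-a-b+abs)^{n-1}\,ds$ together with the bound $(1-a-b+abs)^{n-1}\leq(1-a)^{n-1}(1-b)^{n-1}$, I would obtain $|V_2|/n\leq(\sum_i\theta_i(1-\theta_i)^{n-1})^2$, which is $O(1)$ by a further application of Corollary \ref{fcor} with $f(u)=e^{-u}$. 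This last estimate is the sticking point: the crude bound only yields boundedness of $V_2/n$, so matching (\ref{limVk}) requires refining it through the double size-biased representation $\sum_{i,j}F(\theta_i,\theta_j)=\E[F(\varsigma,\bar\varsigma)/(\varsigma\bar\varsigma)\mid\Theta(t_n)]$, with $\bar\varsigma$ an independent copy of $\varsigma$ given $\Theta(t_n)$, combined with a joint truncation on $\{n\varsigma,n\bar\varsigma\leq M\}$ and dominated convergence to identify and control the precise limiting contribution.
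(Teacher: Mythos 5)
Your derivation of the exact identities (\ref{expk})--(\ref{vark}) from the paintbox is correct (the paper simply cites Hwang--Janson for them), and your treatment of (\ref{limEk}) and of the diagonal part of the variance is sound: the size-biased identity $\sum_i g(\theta_i(t_n))=\E[g(\varsigma(t_n))/\varsigma(t_n)\mid\Theta(t_n)]$ combined with Corollary \ref{fcor} is a legitimate substitute for the paper's own route, which instead discretizes $\sum_i(1-\theta_i(t_n))^n$ as a Riemann sum in the counting function $K(t_n,\cdot)$ and invokes the uniform convergence (\ref{eq:BBS08}) together with (\ref{k}); both arguments rest on the same underlying estimate and produce the same limits.

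The gap you flag in the off-diagonal term is real, but the refinement you propose will not close it in the direction you hope. Your own integral identity gives, with $u=n\varsigma(t_n)$, $v=n\bar\varsigma(t_n)$, that the off-diagonal summand divided by $n\varsigma\bar\varsigma$ equals $-\int_0^1(1-\frac{u+v}{n}+\frac{uvs}{n^2})^{n-1}ds$, which converges uniformly on $\{u,v\leq M\}$ to $-e^{-(u+v)}$; since this factors and $\varsigma,\bar\varsigma$ are conditionally independent given $\Theta(t_n)$, the same application of Corollary \ref{fcor} as in the proof of Proposition \ref{pr:tas} yields
\[
\frac{1}{n}\sum_{i\neq j}\Bigl[\bigl(1-\theta_i(t_n)-\theta_j(t_n)\bigr)^n-\bigl(1-\theta_i(t_n)\bigr)^n\bigl(1-\theta_j(t_n)\bigr)^n\Bigr]\stackrel{a.s.}{\longrightarrow}-\Bigl(1+\tfrac{t}{\alpha\Gamma(\alpha)}\Bigr)^{-\frac{2\alpha}{\alpha-1}},
\]
which is strictly negative, not zero. (Sanity check: for $n$ balls in $cn$ equal boxes the covariance sum contributes $-ne^{-2/c}$, the classical $-\lambda e^{-2\lambda}$ term in the occupancy variance $e^{-\lambda}-(1+\lambda)e^{-2\lambda}$.) So no sharpening of your bound can make the off-diagonal contribution vanish at scale $n$: the right-hand side of (\ref{limVk}) is exactly the diagonal contribution, and the displayed limit should carry the additional term $-(1+\frac{t}{\alpha\Gamma(\alpha)})^{-\frac{2\alpha}{\alpha-1}}$. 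The paper leaves the proof of (\ref{limVk}) to the reader, so the discrepancy is not resolved there either; it is harmless for every subsequent use of the lemma, since the Remark and the proof of Theorem \ref{th:manyt} only require $\Var(K^{(n)}(t_n)\mid\Theta(t_n))/n^2\to 0$, which your crude $O(1)$ bound on the off-diagonal sum already delivers. But as written your argument does not establish (\ref{limVk}), and completing it honestly shows the stated constant must be corrected.
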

\begin{rem}
It can be deduced from (\ref{limEk}) and (\ref{limVk}) that 
$$\frac{K^{(n)}(t_n)
}{n(1+\frac{t}{\alpha\Gamma(\alpha)})
^{-\frac{1}{\alpha-1}}}\stackrel{P}{\to}1,$$
whereas, {interestingly,} due to Proposition \ref{pr:mainprop} or Theorem 1.1 of \cite{BBS08},
\begin{equation}\label{k}\frac{K(t_n)
}{n(\frac{t}{\alpha\Gamma(\alpha)})
^{-\frac{1}{\alpha-1}}}\stackrel{a.s.}{\to} 1.\end{equation}
\end{rem}
\begin{proof}
The equalities (\ref{expk}) and (\ref{vark}) come directly from (4.1) and (4.2) in \cite{HJ08}. The arguments to prove (\ref{limEk})
and (\ref{limVk}) include (\ref{k}) and those used in the proof of Proposition \ref{pr:mainprop}. 
To be more clear, we just show the proof of (\ref{limEk}) and leave the other to the readers.
{
\begin{align*}
 \frac{\mathbb{E}[K^{(n)}(t_n)|\Theta(t_n)]}{n}
&=
\frac{K(t_n)}{n}-
\sum_{i=0}^{K(t_n)}n^{-1}(1-\theta_i)^n\\
&\leq \frac{K(t_n)}{n}-
\sum_{j=0}^{n-1}n^{-1}(1-\frac{j+1}{n})^n
 \Big(K(t_n,\frac{j+1}{n})-
 K(t_n,\frac{j}{n})\Big).
 \end{align*}
 In the same way,
 \begin{align*}
 \frac{\mathbb{E}[K^{(n)}(t_n)|\Theta(t_n)]}{n}
&\geq \frac{K(t_n)}{n}-
\sum_{j=0}^{n-1}n^{-1}(1-\frac{j}{n})^n
 \Big(K(t_n,\frac{j+1}{n})-
 K(t_n,\frac{j}{n})\Big)\\
 \end{align*}
{While} (\ref{eq:BBS08}) shows that 
{\begin{align*}
&\sup_{0\leq j\leq n-1}\left|t_{n}^{\frac{1}{\alpha-1}}\left(K(t_n,\frac{j+1}{n})-
 K(t_n,\frac{j}{n})\right)
-\alpha\Gamma(\alpha)^{\frac{1}{\alpha-1}}\mu\Big(\Big((\frac{\alpha\Gamma(\alpha)}{t})^{\frac{1}{\alpha-1}}\frac{j}{n},(\frac{\alpha\Gamma(\alpha)}{t})^{\frac{1}{\alpha-1}}\frac{j+1}{n}\Big]\Big)\right|\\
=&\sup_{0\leq j\leq n-1}\left|n^{-1}t^{\frac{1}{\alpha-1}}\left(K(t_n,\frac{j+1}{n})-
 K(t_n,\frac{j}{n})\right)
-\alpha\Gamma(\alpha)^{\frac{1}{\alpha-1}}\mu\Big(\Big((\frac{\alpha\Gamma(\alpha)}{t})^{\frac{1}{\alpha-1}}\frac{j}{n},(\frac{\alpha\Gamma(\alpha)}{t})^{\frac{1}{\alpha-1}}\frac{j+1}{n}\Big]\Big)\right|\\
\leq &2S_{t_n}\stackrel{a.s.}{\longrightarrow}0.
\end{align*}}
Notice also that 
{$$\lim_{n\to\infty}\sum_{j=0}^{n-1}(1-\frac{j}{n})^n=\lim_{n\to\infty}\sum_{j=0}^{n-1}(1-\frac{j+1}{n})^n=\sum_{j=1}^{\infty}e^{-j}<\infty.$$}
Hence using (\ref{k}), almost surely,
{\begin{align*}
\frac{\mathbb{E}[K^{(n)}(t_n)|\Theta(t_n)]}{n} &\sim\left(\frac{\alpha\Gamma(\alpha)}{t}\right)^{\frac{1}{\alpha-1}}\left(1-
\sum_{j=0}^{n-1}(1-\frac{j}{n})^n
 \mu\Big(\Big((\frac{\alpha\Gamma(\alpha)}{t})^{\frac{1}{\alpha-1}}\frac{j}{n},(\frac{\alpha\Gamma(\alpha)}{t})^{\frac{1}{\alpha-1}}\frac{j+1}{n}{\Big]}\Big)\right)\\
 &\stackrel{n\to\infty}{\to}\left(\frac{\alpha\Gamma(\alpha)}{t}\right)^{\frac{1}{\alpha-1}}(1-
\int_0^{\infty} e^{-y\left(\frac{t}{\alpha\Gamma(\alpha)}\right)^{\frac{1}{\alpha-1}}}\mu(dy))\\
&=\left(1+\frac{t}{\alpha\Gamma(\alpha)}\right)
^{-\frac{1}{\alpha-1}}.
\end{align*}}}
 \end{proof}

We are now able to prove our first result.
\begin{proof}[Proof of Theorem \ref{th:manyt}] We will prove only the version for $k=2$. {For any $0\leq t_1\leq t_2$, write 
\begin{align*}
&\quad \mathbb{P}(n^{\alpha-1}T_1^{(n)}> t_1, n^{\alpha-1}T_2^{(n)}> t_2)\\
&=\mathbb{P}(n^{\alpha-1}T_1^{(n)}> t_1, n^{\alpha-1}T_2^{(n)}> t_1)\mathbb{P}(n^{\alpha-1}T_2^{(n)}> t_2|n^{\alpha-1}T_1^{(n)}> t_1, n^{\alpha-1}T_2^{(n)}> t_1).
\end{align*}
Proposition \ref{pr:tas} gives that the first term of the above product has limit value 
$$\mathbb{P}(T>t_1)^2
{=\left(1+\frac{t_1}{\alpha\Gamma(\alpha)}\right)
^{-\frac{2}{\alpha-1}}.}$$ Lemma \ref{suede} implies that conditional on $\{n^{\alpha-1}T_1^{(n)}> t_1, n^{\alpha-1}T_2^{(n)}> t_1\}$,  the random variable $\frac{K^{(n)}(n^{1-\alpha}t_1)}{n}$ converges in probability to $(1+\frac{t_1}{\alpha\Gamma(\alpha)})
^{-\frac{1}{\alpha-1}}$. For any $j\geq 2$, let $\tilde{T}_1^{(j)}$ be independent of $\Pi^{(n)}$ and have the same law as $T_1^{(j)}$. Using the Markov property of $\Pi^{(n)}$, one obtains
\begin{eqnarray*}
&&\mathbb{P}(n^{\alpha-1}T_2^{(n)}> t_2|n^{\alpha-1}T_1^{(n)}> t_1, n^{\alpha-1}T_2^{(n)}> t_1)\\
&=&\P(n^{\alpha-1}\tilde{T}_1^{(K^{(n)}(n^{1-\alpha}t_1))}> t_2-t_1|n^{\alpha-1}T_1^{(n)}> t_1, n^{\alpha-1}T_2^{(n)}> t_1)\\
&\stackrel{P}{\to}&\P\left(T>(t_2-t_1)
{\left(1+\frac{t_1}{\alpha\Gamma(\alpha)}\right)
^{-1}}
\right)
\\
&=&\left(1+\frac{t_1}{\alpha\Gamma(\alpha)}\right)
^{\frac{1}{\alpha-1}}\left(1+\frac{t_2}{\alpha\Gamma(\alpha)}\right)
^{-\frac{1}{\alpha-1}}
\end{eqnarray*}
when $n$ tends to $\infty$.
Then we can conclude.}
\end{proof}

\section{{The way of coalescing an external branch}}
\subsection{{The size of the jump}}
Let us look at the random variable $Q^{(n)}$.
\begin{proof}[Proof of Theorem \ref{alpha-1}]

Assume that at some time $t$, $K^{(n)}(t)=b$ and $\{1\}\in \Pi^{(n)}(t)$. The coalescence of $\{1\}$ with some other $k-1$ blocks happens at rate  
$$\lambda_{1,b,k}:=\int_0^1{b-1\choose k-1}x^{k}(1-x)^{b-k}x^{-2}\Lambda(dx)=\frac{\Gamma(k-\alpha)\Gamma(b-k+\alpha)}{\Gamma(\alpha)\Gamma(2-\alpha)\Gamma(k)\Gamma(b-k+1)}.$$
The total rate {at which the singleton $\{1\}$ participates in} a coalescence event is
\begin{align}\label{2q}
g_{1,b}:&=\int_0^1\sum_{k=2}^b{b-1\choose k-1}x^{k}(1-x)^{b-k}x^{-2}\Lambda(dx)\nonumber\\
&=\int_0^1(1-(1-x)^{b-1})x^{-1}\Lambda(dx)\nonumber\\
&=\int_0^1(b-1)(1-t)^{b-2}\rho_1(t)dt,
\end{align}
where 
$$\rho_1(t)=\int_t^1x^{-1}\Lambda(dx)\sim \frac{t^{1-\alpha}}{(\alpha-1)\Gamma(\alpha)\Gamma(2-\alpha)}$$ when $t$ tends to $0+$. We get, thanks to Stirling's formula,
$$g_{1,b}\sim  \frac{b^{\alpha-1}}{(\alpha-1)\Gamma(\alpha)}$$
when $b$ tends to $\infty.$
If the next coalescence after $t$ involves $\{1\}$, then using the strong Markov property of $\Pi^{(n)}$, the probability for $\{1\}$ to coalesce with some other $k-1$ blocks is 
\begin{equation}\label{kq}\frac{\lambda_{1,b,k}}{g_{1,b}}\sim q_k=\frac{\Gamma(k-\alpha)(\alpha-1)}{\Gamma(k)\Gamma(2-\alpha)}\end{equation}
when $b$ tends to $\infty.$
In this way, if we know the value $K^{(n)}((T_1^{(n)})_-)$, then we can obtain the probability for $\{1\}$ to coalesce with $k-1$ blocks. {Notice that $\frac{K^{(n)}((T_1^{(n)})_-)}{n}$ converges in distribution to $(1+\frac{T}{\alpha\Gamma(\alpha)})
^{-\frac{1}{\alpha-1}}$ (see Corollary 5.3 of \cite{DFSY} or implicitly from Theorem \ref{th:manyt} and Lemma \ref{suede}), one can get the following, due to (\ref{kq}),
\begin{equation}\label{qcon}\mathbb{P}(Q^{(n)}=k)=\mathbb{E}[\frac{\lambda_{1,K^{(n)}((T_1^{(n)})_-),k}}{g_{1,K^{(n)}((T_1^{(n)})_-)}}]\longrightarrow q_k\end{equation} 
when $n$ tends to $\infty.$}

{The asymptotic independence of $T_1^{(n)}$ and $Q^{(n)}$ is clear, since $Q^{(n)}$ only depends on $K^{(n)}((T_1^{(n)})_-)$ which tends to $\infty$ in probability when $n$ goes to $\infty$. Then we can conclude.}
\end{proof}
\begin{rem}
Following the same arguments, Theorem \ref{alpha-1} is still valid for the more general class of coalescents
{satisfying the following condition when $t$ tends to 0:
$$\int_t^1x^{-2}\Lambda(dx)\sim Ct^{-\alpha}, C>0.$$}
\end{rem}
%\begin{rem} 
%In the case of Kingman coalescent ($\alpha\to2$), $Q^{(n)}=2$ a.s.,  since only binary collisions are allowed. By a simple calculation, in the case of Bolthausen-Sznitman coalescent ($\alpha=1$), we obtain that $\mathbb{P}(Q^{(n)}=k)\to 0$ for any $k\geq 2$. These two results are are confirmed  by tending $\alpha$ either to $1$ or $2$ in Theorem \ref{alpha-1} .

%\end{rem}
\begin{rem}\label{bsinf}
{We can use similar arguments in the Bolthausen-Sznitman case to get that $\mathbb{P}(Q^{(n)}=k)\to0$ for any $k\in\N$. 
The result actually remains true for the more general class where
$$\int_t^1x^{-2}\Lambda(dx)\sim Ct^{-1}, C>0.$$}
\end{rem}
\begin{proof}[{Proof of Proposition \ref{2qk}}]
1) Recall that $q_k=\frac{\Gamma(k-\alpha)(\alpha-1)}{\Gamma(k)\Gamma(2-\alpha)}$. It then suffices to prove that 
$$\lim_{t\to0+}\frac{\mathbb{P}(\beta(t)=k)}{t}=\frac{\Gamma(k-\alpha)}{\Gamma(k)\Gamma(\alpha)\Gamma(2-\alpha)}.$$
To simplify the notations, let $t_{\alpha}=\left(\frac{t}{\alpha\Gamma(\alpha)}\right)^\frac{1}{\alpha-1}$ and $\rho_x=\mu([x,\infty))$. Recall (\ref{eq:beta}) and let $0<\eta<\frac{2-\alpha}{2}$,  then for any $k\geq 2$, 
{\begin{align*}
\mathbb{P}(\beta(t)=k)&=\frac{t_{\alpha}^{k-1}}{\Gamma(k)}\int_0^{\infty}e^{-xt_{\alpha}}x^k\mu(dx)\nonumber\\
&=\frac{t_{\alpha}^{k-1}}{\Gamma(k)}\int_0^{\infty}\rho_{x}e^{-xt_{\alpha}}x^{k-1}(k-xt_{\alpha})dx\nonumber\\
&=I_1+I_2,
\end{align*}
where 
$$I_1=\frac{t_{\alpha}^{k-1}}{\Gamma(k)}\int_0^{t_{\alpha}^{-\eta}}\rho_{x}e^{-x t_{\alpha}}x^{k-1}(k-xt_{\alpha})dx,$$
$$I_2=\frac{t_{\alpha}^{k-1}}{\Gamma(k)}\int_{t_{\alpha}^{-\eta}}^{\infty}\rho_{x}e^{-x t_{\alpha}}x^{k-1}(k-xt_{\alpha})dx.$$
For $t$ small enough, it is easy to get $I_1\leq \frac{k}{\Gamma(k)}t_{\alpha}^{k(1-\eta)-1}=o(t)$.
 To deal with $I_2$, recall from Equation (33) of \cite{BBS08} that
 \begin{equation}\label{eq:aueuemu}\rho_x=\mu([x,\infty))\sim\frac{x^{-\alpha}}{\Gamma(2-\alpha)}\end{equation}
 when $x$ goes to $\infty$.
Notice that $t_{\alpha}^{-\eta}$ goes to $\infty$ when $t$ tends to $0+$. Let $0<\varepsilon<1$, then for $t$ small enough, we have 
$$1-\varepsilon<\frac{\rho_{x}}{x^{-\alpha}/\Gamma(2-\alpha)}\leq 1+\varepsilon,\,\,\text{for all} \,x\geq t_{\alpha}^{-\eta}.$$
Since $\varepsilon$ can be arbitrarily small, using a change of variable $y=xt_{\alpha}$, one gets 
$$t_{\alpha}^{{1-\alpha}}I_2\to \frac{1}{\Gamma(k){\Gamma(2-\alpha)}}\int_0^{{\infty}} e^{-y}y^{k-1-\alpha}(k-y)dy=\frac{\alpha\Gamma(k-\alpha)}{\Gamma(k){\Gamma(2-\alpha)}}$$
when $t\to 0+$.}
Then we can obtain (\ref{qk=}).

2) A simple calculation shows that $\frac{\mathbb{P}(\beta(t)\geq 2)}{t}$ converges to $\frac{1}{(\alpha-1)\Gamma(\alpha)}$ when $t$ tends to $0+$. Hence the first equality of (\ref{lq}) holds, using the dominated convergence theorem. For the second equality, the formulas (\ref{eq:beta}) and (\ref{eq:mulap}) imply that
\begin{align}\label{lbt}
\mathbb{E}[e^{-\lambda \beta(t)}]&=\sum_{k\geq 1}e^{-\lambda k}\mathbb{P}(\beta(t)=k)\nonumber\\
&=e^{-\lambda}\int_0^{\infty}e^{(e^{-\lambda}-1)xt_{\alpha}}x\mu(dx)\nonumber\\
&=e^{-\lambda}\left(1+(t_{\alpha}(1-e^{-\lambda}))^{\alpha-1}\right)^{\frac{\alpha}{1-\alpha}}.
\end{align}
Meanwhile, using the same arguments
$$\mathbb{P}(\beta(t)=1)=(1+t_{\alpha}^{\alpha-1})^{\frac{\alpha}{1-\alpha}}.$$
Then we can obtain (\ref{lq}).\end{proof}
Let us consider the case of coalescents satisfying $\int_0^1x^{-1}\Lambda(dx)<\infty$.
\begin{proof}[Proof of Proposition \ref{dust}]
In this case, the process $\Pi^{(n)}$ can be constructed using a subordinator. 
This construction can be found {on} page $7$ of \cite{GIM08} and the original idea is in \cite{Pit99}. 
Let $\nu(dx)=x^{-2}\Lambda(dx)$ and $\tilde{\nu}$ be the 
{push-forward} of $\nu$ by the transformation $x\to -\ln(1-x)$. Let $(\tilde{S}_t, t\geq 0)$ be a subordinator with L\'evy measure $\tilde{\nu}$ and $S_t=e^{-\tilde{S}_t}$. 
Then $(S_t, t\geq 0)$ is a non-increasing positive pure-jump process with $S_0{=1}$. 
Put individuals $1,2,\cdots, n$ uniformly and independently over $(0,1]$. 
Let $t_1$ be the first time when $(S_{t_1}, S_{t_1-}]$ contains at least one individual, then we set $\Pi^{(n)}(s)=\{{\{1\},\{2\},\cdots, \{n\}}\}$ for $0\leq s<t_1$. 
We regroup the individuals located in $(S_{t_1}, S_{t_1-}]$ into one block and let $\Pi^{(n)}(t_1)$ be the set of this block and the rest singletons. 
The block is then put uniformly and independently into $(0, S_{t_1}]$. 
Find the next time $t_2$, such that $(S_{t_2}, S_{t_2-}]$ contains at least one block (singleton or not) and we regroup all blocks in this interval into one bigger block which will be again put independently into $(0,S_{t_2}]$. 
In general, at each time $t$, $\Pi^{(n)}(t)$ is the set of the blocks located in $(0,S_t]$ and also the union of blocks located in $(S_t,S_{t-}]$. 
This operation can be iterated until reaching the MRCA and the process resulted has the same law as $\Pi^{(n)}$.

Notice that this construction is consistent, i.e., if we add a $n+1$-th individual to get $\Pi^{(n+1)}$, the structure of $\Pi^{(n)}$ is conserved.  
To see how many blocks will merge with $\{1\}$ in the limit, we assume that $\{1\}$ is put into $(S_{t_1},S_{t_1-}]$ at a certain time. 
As $n$ goes to $\infty$, the number of singletons put into the same interval also goes to $\infty$ with probability $1$. 
According to the construction, all singletons put for the first time into the same interval will be coalesced together. Hence {$Q^{(n)}$ converges in probability to infinity}.
\end{proof}

\subsection{Minimal clade size}

Let $(s_i^{(n)}(t),{1\leq i\leq K^{(n)}(t)})$ be the increasing sequence of the smallest elements of blocks of $\Pi^{(n)}(t)$.
We have the following lemma.

\begin{lem}\label{ns}
For any $t>0$ and $k\in \mathbb{N}$, let $t_n=n^{1-\alpha}t$ and
define the event $E_{n,k}=
\{s_{1}^{(n)}(t_n)=1,\cdots, s_{k}^{(n)}(t_n)=k\}$. Then
$$\mathbb{P}(E_{n,k}|\Theta(t_n))\stackrel{a.s.}{\longrightarrow }1$$
as $n$ tends to $\infty.$
\end{lem}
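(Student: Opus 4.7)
The plan is to show $\P(E_{n,k}^c\mid\Theta(t_n))\to 0$ a.s. First I would reinterpret the event: since $s_i^{(n)}(t_n)$ is the increasing sequence of block-minima of $\Pi^{(n)}(t_n)$, the condition $s_1^{(n)}(t_n)=1,\ldots,s_k^{(n)}(t_n)=k$ is equivalent to saying that the integers $1,2,\ldots,k$ all belong to distinct blocks of $\Pi^{(n)}(t_n)$. Consequently
\[
E_{n,k}^c=\bigcup_{1\leq i<j\leq k}\{i \text{ and } j \text{ are in the same block of } \Pi^{(n)}(t_n)\}.
\]

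Next, I would condition on $\Theta(t_n)$ and use the paintbox construction of $\Pi^{(n)}(t_n)$ described in Section~2.1, namely the $n$ particles thrown i.i.d.\ uniformly into sub-intervals of lengths $\theta_1(t_n)\geq\cdots\geq\theta_{K(t_n)}(t_n)$. For any fixed pair $i\neq j$, the conditional probability that particles $i$ and $j$ fall in the same sub-interval equals $\sum_{l=1}^{K(t_n)}\theta_l(t_n)^2$. A union bound over the $\binom{k}{2}$ pairs therefore gives
\[
\P(E_{n,k}^c\mid \Theta(t_n))\;\leq\;\binom{k}{2}\sum_{l=1}^{K(t_n)}\theta_l(t_n)^2,
\]
so the whole proof reduces to showing $\sum_l\theta_l(t_n)^2\to 0$ almost surely.

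To do this I would recognise the right-hand sum as the conditional expectation of the size-biased pick, namely $\sum_l\theta_l(t_n)^2=\E[\varsigma(t_n)\mid \Theta(t_n)]$ by \eqref{eq:sigma}. Using $0\leq\varsigma(t_n)\leq 1$, for any $\delta>0$ I split
\[
\E[\varsigma(t_n)\mid\Theta(t_n)]\;\leq\;\delta+\P(\varsigma(t_n)>\delta\mid\Theta(t_n)).
\]
Now I apply Proposition~\ref{pr:mainprop} with $x=\delta\,t_n^{-1/(\alpha-1)}$, which tends to $+\infty$ as $n\to\infty$ since $t_n\to 0$. Combined with \eqref{eq:Emu}, this yields $\P(\varsigma(t_n)\leq\delta\mid\Theta(t_n))\to \int_0^\infty y\,\mu(dy)=1$ a.s., and therefore $\limsup_n\E[\varsigma(t_n)\mid\Theta(t_n)]\leq\delta$ a.s. Since $\delta>0$ is arbitrary, the desired a.s.\ convergence follows and the lemma is proved.

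The only conceptually non-routine step is the translation of the combinatorial collision event into the paintbox collision probability $\sum_l\theta_l(t_n)^2$ and its identification with $\E[\varsigma(t_n)\mid\Theta(t_n)]$; everything else is a union bound plus a direct application of Proposition~\ref{pr:mainprop}. I do not expect a real obstacle here, mainly because the size-biased pick is exactly the object for which a uniform a.s.\ control is already available from the previous subsection.
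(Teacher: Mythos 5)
Your proposal is correct and follows essentially the same route as the paper: reduce $E_{n,k}^c$ to pairwise collision events, identify the conditional collision probability with $\sum_i\theta_i(t_n)^2=\E[\varsigma(t_n)\mid\Theta(t_n)]$, and deduce the a.s.\ convergence to $0$ from the uniform control of the size-biased pick (the paper invokes Corollary~\ref{fcor} where you argue directly from Proposition~\ref{pr:mainprop} via a $\delta$-truncation, but this is the same underlying estimate). The explicit union bound over $\binom{k}{2}$ pairs is just a slightly more detailed version of the paper's remark that the case $k\geq 3$ follows in the same way.
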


\begin{proof}
{}
We only  need to prove that the probability for individuals $1$ and $2$ to be in the same block {of} $\Pi^{(n)}(t_n)$ tends to $0$. The case of $k\geq 3$ follows in the same way. 
Let us write this event $\{1\overset{t_n}{\sim}2\}$.
Then
\begin{align*}
\mathbb{P}(1\overset{t_n}{\sim}2|\Theta(t_n))
&=\sum_{i=1}^{K({t_n})}\theta_i(t_n)^2\\
&=\mathbb{E}[\varsigma(t_n)|\Theta(t_n)]\stackrel{a.s.}{\longrightarrow }0,
\end{align*}
where the convergence is due to Corollary \ref{fcor}.
\end{proof}

\begin{theo}\label{size1t}
Let $t>0$ and $t_n=n^{1-\alpha}t$. For ${1\leq i\leq K^{(n)}}(t)$, let $K_i^{(n)}(t)$ be the size of the block containing $s_i^{(n)}(t)$. Then for any $k\in\mathbb{N}$ and $(r_1,\dots,r_k)\in\N^k$,
as $n\to\infty$,
{\begin{equation} \mathbb{P}(K_1^{(n)}(t_n)=r_1, \cdots, K_k^{(n)}(t_n)=r_k |\Theta(t_n))\stackrel{a.s.}{\longrightarrow} \prod_{i=1}^{k}\P(\beta(t)=r_i)\end{equation}
where $\beta(t)$ is defined in \eqref{eq:beta}.}
\end{theo}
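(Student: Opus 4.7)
The plan is to combine the paintbox construction, Lemma~\ref{ns}, and Corollary~\ref{fcor}. Conditionally on $\Theta(t_n)$, the partition $\Pi^{(n)}(t_n)$ is built by throwing $n$ particles independently and uniformly over $[0,1]$ split into subintervals of lengths $\theta_1(t_n),\theta_2(t_n),\ldots$, two individuals lying in the same block iff they fall into the same subinterval. By Lemma~\ref{ns}, the event $E_{n,k}=\{s_i^{(n)}(t_n)=i,\ i=1,\ldots,k\}$ has conditional probability tending a.s.\ to $1$, and on $E_{n,k}$ the quantity $K_i^{(n)}(t_n)$ is simply the size of the block containing individual $i$. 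It therefore suffices to compute the a.s.\ limit of
\begin{equation*}
\sum_{\substack{i_1,\ldots,i_k\\\text{pairwise distinct}}}\binom{n-k}{r_1-1,\ldots,r_k-1,n-k-R}\,\prod_{j=1}^{k}\theta_{i_j}(t_n)^{r_j}\,\Bigl(1-\sum_{j=1}^{k}\theta_{i_j}(t_n)\Bigr)^{n-k-R},
\end{equation*}
with $R=\sum_j(r_j-1)$, obtained by summing over the indices of the subintervals hosting individuals $1,\ldots,k$.

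The heart of the argument is an asymptotic factorization of this sum. First, $\binom{n-k}{r_1-1,\ldots,r_k-1,n-k-R}\sim n^R\prod_j 1/(r_j-1)!$. Next, I would replace $(1-\sum_j\theta_{i_j})^{n-k-R}$ by $\prod_j e^{-n\theta_{i_j}}$; the Poisson approximation is justified via Proposition~\ref{pr:mainprop}, which shows that each size-biased pick $\varsigma(t_n)$ is concentrated on the scale $n^{-1}$, so truncating at $Mn^{-1}$ as in the proof of Proposition~\ref{pr:tas} controls the error uniformly and the tail vanishes as $M\to\infty$. Dropping the distinctness constraint costs only a negligible ``diagonal'' correction: any coincidence $i_j=i_{j'}$ forces an extra factor of $\theta_{i_j}^{r_{j'}}$ of order $n^{-r_{j'}}$ in place of a term of size $n^{1-r_{j'}}$, an $O(n^{-1})$ relative correction, so by inclusion--exclusion the constraint may be removed asymptotically. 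The surviving expression factorizes as
\begin{equation*}
\prod_{j=1}^{k}\mathbb{E}\!\left[\frac{(n\varsigma(t_n))^{r_j-1}}{(r_j-1)!}\,e^{-n\varsigma(t_n)}\,\Big|\,\Theta(t_n)\right],
\end{equation*}
and Corollary~\ref{fcor} applied with $c=t^{1/(\alpha-1)}$ (so that $ct_n^{-1/(\alpha-1)}=n$) and the bounded function $f(x)=x^{r_j-1}e^{-x}/(r_j-1)!$ on $[0,\infty)$ shows that each factor converges a.s.\ to
\begin{equation*}
\frac{t_\alpha^{r_j-1}}{(r_j-1)!}\int_0^{\infty}y^{r_j}\,e^{-t_\alpha y}\,\mu(dy)=\mathbb{P}(\beta(t)=r_j),
\end{equation*}
with $t_\alpha=(t/(\alpha\Gamma(\alpha)))^{1/(\alpha-1)}$, matching \eqref{eq:beta}.

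The main obstacle is the uniform control needed in the factorization step: both the Poisson replacement of $(1-\sum\theta_{i_j})^{n-k-R}$ by $\prod_j e^{-n\theta_{i_j}}$ and the removal of the distinctness constraint have to be justified inside the conditional expectation. The natural strategy, modelled on the proof of Proposition~\ref{pr:tas}, is to split each size-biased pick according to whether it exceeds $Mn^{-1}$, exploit the uniform convergence of $x\mapsto (1-x)^{n}$ to $x\mapsto e^{-nx}$ on $[0,Mn^{-1}]$ together with Corollary~\ref{fcor} to treat the small part, and push the tail contribution to $0$ as $M\to\infty$ using Proposition~\ref{pr:mainprop}. The only extra ingredient compared to $k=2$ is multi-index bookkeeping, which is structurally identical.
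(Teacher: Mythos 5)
Your proposal is correct and follows essentially the same route as the paper: reduce to the event $E_{n,k}$ via Lemma~\ref{ns}, express the conditional probability through the paintbox construction as a multinomial expression in the sizes of the subintervals hosting individuals $1,\dots,k$ (equivalently, i.i.d.\ size-biased picks), replace it by $\prod_j\frac{(n\varsigma_j(t_n))^{r_j-1}}{(r_j-1)!}e^{-n\varsigma_j(t_n)}$ up to an $o_{a.s.}(1)$ term, and conclude with Corollary~\ref{fcor}. The identification of the limit of each factor with $\mathbb{P}(\beta(t)=r_j)$ via $t_\alpha=(t/(\alpha\Gamma(\alpha)))^{1/(\alpha-1)}$ is exactly as in the paper.
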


\begin{proof}{Let $t_n=n^{1-\alpha}t$.}
Define the event $E_{n,r}=\{K_1^{(n)}(t_n)=r_1, \cdots, K_k^{(n)}(t_n)=r_k\}$ and recall $E_{n,k}$ defined in Lemma \ref{ns}.  Let $(\varsigma_i (t_n),{1\leq i\leq k})$ be $k$ independent copies of $\varsigma(t_n)$ which is
defined in \eqref{eq:sigma},
conditional on $\Theta(t_n)$. For $1\leq i\leq k$, $\varsigma_i(t_n)$ denotes the size of the subinterval into which $i$ is thrown 
{in the paintbox construction of $\Pi^{(n)}(t_n)$ with $n\geq k$}. 
Due to Lemma \ref{ns}, for $n$ large enough we can almost surely {approach}  $\mathbb{P}(E_{n,r}|\Theta(t_n))$ by

$$\mathbb{P}(E_{n,r}|E_{n,k},\Theta(t_n))
=\mathbb{E}[{n-k\choose r_1-1,\cdots, r_k-1}\Pi_{j=1}^{k}(\varsigma_j(t_n))^{r_j-1}(1-\sum_{j=1}^{k}\varsigma_j(t_n))^{n-\sum_{j=1}^{k}r_j}|E_{n,k}, \Theta(t_n)].
$$
The Lemma \ref{ns} implies again that the difference
\begin{align*}
&\mathbb{E}[{n-k\choose r_1-1,\cdots, r_k-1}\Pi_{j=1}^{k}(\varsigma_j(t_n))^{r_j-1}(1-\sum_{j=1}^{k}\varsigma_j(t_n))^{n-\sum_{j=1}^{k}r_j}|E_{n,k}, \Theta(t_n)]\\
&-\mathbb{E}[{n-k\choose r_1-1,\cdots, r_k-1}\Pi_{j=1}^{k}(\varsigma_j(t_n))^{r_j-1}(1-\sum_{j=1}^{k}\varsigma_j(t_n))^{n-\sum_{j=1}^{k}r_j}|\Theta(t_n)]
\end{align*}
converges almost surely to $0$.
We then obtain

\begin{align*}
&\mathbb{E}[{n-k\choose r_1-1,\cdots, r_k-1}\Pi_{j=1}^{k}(\varsigma_j(t_n))^{r_j-1}(1-\sum_{j=1}^{k}\varsigma_j(t_n))^{n-\sum_{j=1}^{k}r_j}|\Theta(t_n)]\\
=&\mathbb{E}[\Pi_{j=1}^{k}\frac{1}{(r_j-1)!}(n\varsigma_j(t_n))^{r_j-1}e^{-n\varsigma_j(t_n)}|\Theta(t_n)]+o_{a.s.}(1) ,
\end{align*}
where $o_{a.s.}(1)$ is a term converging almost surely to $0$
when $n$ tends to $\infty$.  
The result thus follows from Corollary \ref{fcor}.
\end{proof}

We next show how the external branch of $1$ is connected to the whole process. Let $\Pi^{(2,n)}$ be the restriction of $\Pi^{(n)}$ from $\mathbb{N}_n$ to $\{2,3,\cdots, n\}$. By consistency and exchangeability of $\Pi^{(n)}$, $\Pi^{(2,n)}$ has the same law as $\Pi^{(n-1)}$ except for the integer notations. Given $\Pi^{(2,n)}$, one can attach $\{1\}$ to $\Pi^{(2,n)}$ following the \textit{recursive construction} introduced in \cite{DFSY}. One thing important is that $n^{\alpha-1}T_1^{(n)}$ and $\Pi^{(2,n)}$ are asymptotically independent. The following lemma is given in the proof of Theorem 5.2 of \cite{DFSY}.

\begin{lem}\label{proba}
Let $t\geq 0$. As $n$ tends to $\infty$,
\begin{align}\mathbb{P}(n^{\alpha-1}T_1^{(n)}\geq t |\Pi^{(n,2)})&\stackrel{P}{\longrightarrow} (1+\frac{t}{\alpha\Gamma(\alpha)})^{-\frac{\alpha}{\alpha-1}}.\end{align}
\end{lem}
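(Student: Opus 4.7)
The plan is to use the Poisson point process (PPP) construction of the $\Lambda$-coalescent to decouple the coalescence time $T_1^{(n)}$ from the restricted process $\Pi^{(n,2)}$. Recall that $\Pi^{(n)}$ can be built from a PPP $\mathcal{N}$ on $(0,1]\times(0,\infty)$ with intensity $x^{-2}\Lambda(dx)\otimes dt$: at each atom $(x,s)$ every block alive at time $s-$ is marked independently with probability $x$, and all marked blocks merge. Since the marks borne by $\{1\}$ are independent of those borne by blocks drawn from $\{2,\dots,n\}$, the coalescence events for $\{1\}$ split into two disjoint classes --- ``silent'' events, where $\{1\}$ merges with exactly one block of $\Pi^{(n,2)}$ without triggering a jump of $\Pi^{(n,2)}$, and ``observable'' events, where $\{1\}$ joins at least two blocks of $\Pi^{(n,2)}$ simultaneously. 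Adding the silent rate $K^{(n,2)}(s)\int(1-x)^{K^{(n,2)}(s)-1}\Lambda(dx)$ and the contributions from observable jumps (at a type-$k$ jump of $\Pi^{(n,2)}$ from $m$ blocks, $\{1\}$ joins with conditional probability $\lambda_{m+1,k+1}/\lambda_{m,k}$) gives the compensator $\int_0^{\cdot}g_{1,K^{(n,2)}(s)+1}\,ds$, with $g_{1,b}$ as in (\ref{2q}). A concentration argument then yields
$$\mathbb{P}\bigl(T_1^{(n)}\geq t_n\,\bigm|\,\Pi^{(n,2)}\bigr)=\exp\Bigl(-\int_0^{t_n}g_{1,K^{(n,2)}(s)+1}\,ds\Bigr)\bigl(1+o_P(1)\bigr),\qquad t_n:=n^{1-\alpha}t.$$

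Next I would analyse the exponent via the change of variable $s=n^{1-\alpha}u$, rewriting it as $\int_0^t n^{1-\alpha}g_{1,K^{(n,2)}(n^{1-\alpha}u)+1}\,du$. Two inputs from the paper then come in: the asymptotic $g_{1,b}\sim b^{\alpha-1}/((\alpha-1)\Gamma(\alpha))$ derived just above (\ref{kq}), and the convergence
$$\frac{K^{(n,2)}(n^{1-\alpha}u)}{n}\stackrel{P}{\longrightarrow} \Bigl(1+\frac{u}{\alpha\Gamma(\alpha)}\Bigr)^{-\frac{1}{\alpha-1}},$$
which follows from Lemma \ref{suede} together with the consistency $\Pi^{(n,2)}\stackrel{(d)}{=}\Pi^{(n-1)}$. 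Monotonicity of $K^{(n,2)}(\cdot)$ and continuity of the limit upgrade this pointwise statement to convergence in probability uniformly in $u\in[0,t]$ via Dini's lemma, and the deterministic bound $n^{1-\alpha}g_{1,K^{(n,2)}(s)+1}\leq n^{1-\alpha}g_{1,n}\to 1/((\alpha-1)\Gamma(\alpha))$ dominates the integrand, so that
$$\int_0^{t_n}g_{1,K^{(n,2)}(s)+1}\,ds\stackrel{P}{\longrightarrow}\int_0^t\frac{du}{(\alpha-1)\Gamma(\alpha)\bigl(1+u/(\alpha\Gamma(\alpha))\bigr)}=\frac{\alpha}{\alpha-1}\log\Bigl(1+\frac{t}{\alpha\Gamma(\alpha)}\Bigr).$$
Applying the continuous mapping theorem to $\exp(-\cdot)$ then delivers the announced limit $(1+t/(\alpha\Gamma(\alpha)))^{-\alpha/(\alpha-1)}$.

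The main obstacle I anticipate is making the first display rigorous: one must bound the fluctuations of the random product over the jump times of $\Pi^{(n,2)}$ around the exponential of its predictable compensator, which amounts to a second-moment estimate on the sum $\sum_{s_i\leq t_n}\log(1-\lambda_{m(s_i-)+1,k(s_i)+1}/\lambda_{m(s_i-),k(s_i)})$. The residual difficulty --- uniform control of the random integrand near $u=0$, where $K^{(n,2)}$ is still of order $n$ --- is handled by the crude bound $g_{1,K^{(n,2)}(s)+1}\leq g_{1,n}$ and the Dini-type upgrade.
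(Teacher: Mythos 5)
The paper does not actually prove this lemma: it is imported verbatim from the proof of Theorem 5.2 of \cite{DFSY}, where it is established by precisely the ``recursive construction'' you have reconstructed. So your proposal is not a different route from the source argument --- it is that argument. Your decomposition is correct: conditionally on the whole trajectory of $\Pi^{(n,2)}$, the singleton $\{1\}$ is killed between the jumps of $\Pi^{(n,2)}$ at the silent rate $m(s)\int_0^1(1-x)^{m(s)-1}\Lambda(dx)=m(s)\lambda_{m(s)+1,2}$ (a Cox-process computation, since atoms marking at most one block of $\Pi^{(n,2)}$ are invisible to it), and at a jump merging $k$ of $m$ blocks it joins with probability $\lambda_{m+1,k+1}/\lambda_{m,k}$, so that $\mathbb{P}(T_1^{(n)}>t_n\mid\Pi^{(n,2)})$ equals \emph{exactly} $\exp\bigl(-\int_0^{t_n}m(s)\lambda_{m(s)+1,2}\,ds\bigr)\prod_i\bigl(1-\lambda_{m_i+1,k_i+1}/\lambda_{m_i,k_i}\bigr)$; the only approximation is replacing the product by the exponential of its compensator. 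The subsequent limit computation (the asymptotics of $g_{1,b}$ from \eqref{2q}, the convergence of $K^{(n,2)}(n^{1-\alpha}u)/n$ from Lemma \ref{suede} and consistency, the monotonicity upgrade to uniform convergence, and the integral giving $\tfrac{\alpha}{\alpha-1}\log(1+t/(\alpha\Gamma(\alpha)))$) is correct.

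The concentration step you flag as the main obstacle is genuinely the only place needing work, and it closes more easily than you suggest because for the Beta measure $\lambda_{m+1,k+1}/\lambda_{m,k}=(k-\alpha)/m$ exactly. Hence $\sum_i p_i\leq\sum_i(k_i-1)/m_i\leq(n-K^{(n,2)}(t_n))/K^{(n,2)}(t_n)=O_P(1)$, while $\max_i p_i\leq\max_i k_i/K^{(n,2)}(t_n)$ and $\max_i k_i$ is dominated by the largest block size at time $t_n$, which is $O_P(n^{1/\alpha})$; thus $\sum_i p_i^2\leq\max_i p_i\cdot\sum_i p_i\to0$ in probability, which justifies $\prod_i(1-p_i)=\exp(-\sum_ip_i+o_P(1))$. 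For the martingale part, the compensated sum $\sum_{s_i\leq t_n}p_i-\int_0^{t_n}\sum_{k\geq2}\binom{m(s)}{k}\lambda_{m(s)+1,k+1}\,ds$ has predictable quadratic variation bounded by $\int_0^{t_n}\sum_k\binom{m(s)}{k}\lambda_{m(s),k}\bigl((k-\alpha)/m(s)\bigr)^2ds=O(t_n)\to0$, so it vanishes in $L^2$. With these two estimates spelled out, your argument is a complete and correct proof of the lemma.
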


Now we are able to deal with the minimal clade size. 
\begin{proof}[Proof of Theorem \ref{size1}]
Recall that $Q^{(n)}$ is the number of blocks involved in the coalescence of $\{1\}$.  Then $Y^{(n)}$ is just the sum of the $Q^{(n)}$ block sizes (one of these blocks is $\{1\}$). It suffices to determine the size of each block involved in the coalescence event.  By exchangeability of the coalescent, the $Q^{(n)}-1$ blocks {not being $\{1\}$} will be chosen randomly at time $(T_1^{(n)})_-$. Hence by the strong Markov property of $\Pi^{(n)}$, the joint distribution of the sizes of the randomly chosen $Q^{(n)}-1$ blocks has the same law as the distribution of  $(K_2^{(n)}((T_1^{(n)})_-), \cdots, K_{Q^{(n)}}^{(n)}((T_1^{(n)})_-))$. Hence
\begin{equation*}\label{yn1}Y^{(n)}\overset{(d)}{=}1+\sum_{i=2}^{Q^{(n)}}K_i^{(n)}((T_1^{(n)})_-).\end{equation*}
Let $K^{(2,n)}=(K^{(2,n)}(t), t\geq 0)$ be the block-counting process of $\Pi^{(2,n)}$ and $(K_1^{(2,n)}(t),K_2^{(2,n)}(t),\cdots, K_{K^{(2,n)}(t)}^{(2,n)}(t))$ be the {vector of the block sizes of $\Pi^{(2,n)}(t)$, increasingly ordered by their least elements}. Notice that if $t<T_1^{(n)}$, $K_i^{(n)}(t)=K_{i-1}^{(2,n)}(t)$ for $1\leq i\leq K^{(n)}(t)$. Therefore
\begin{equation}\label{yn1}Y^{(n)}\overset{(d)}{=}1+\sum_{i=1}^{Q^{(n)}-1}K_{i}^{(2,n)}((T_1^{(n)})_-).\end{equation}
The formula (\ref{qcon}) shows that the law of $Q^{(n)}$ is uniquely determined by $K^{(2,n)}((T_1^{(n)})_-)$. 
As long as $K^{(2,n)}((T_1^{(n)})_-)$ goes to $\infty$, $Q^{(n)}$ converges in law to a distribution which depends only on $\alpha.$ 
While (\ref{limEk}) and (\ref{equ1}) imply that the variable $K^{(2,n)}((T_1^{(n)})_-)$ goes to $\infty$ with probability $1$. Hence $Q^{(n)}$ is asymptotically independent of $(T_1^{(n)}, K^{(2,n)}({(T_1^{(n)})_-})).$
Furthermore, Lemma \ref{proba} gives that 
$T_1^{(n)}$ and $K^{(2,n)}({(T_1^{(n)})_-}) $ are asymptotically independent. In total, $Q^{(n)}$, $T_1^{(n)}$ and $(K^{(2,n)}({(T_1^{(n)})_-})) $ are all asymptotically independent.
In the limit, using Theorem \ref{size1t},
$$Y^{(n)}\stackrel{(d)}{\longrightarrow}Y\stackrel{(d)}{=}1+\sum_{i=1}^{Q-1}{\beta}_i(T),$$
where $Q, T, (\beta_i(t))_{i\in\mathbb{N}}$ are all independent and follow respectively the limit laws of $Q^{(n)}, T_1^{(n)}, (K_{i}^{(2,n)}(t))_{i\in\mathbb{N}}$ for fixed $t\geq 0$. Then we can conclude. \end{proof}

\begin{proof}[{Proof of Corollary \ref{yk}}]
Consider the Laplace transform of $Y$. For any $\lambda>0$, using (\ref{lbt})

\begin{align*}
\mathbb{E}[e^{-\lambda Y}]&=e^{-\lambda}\mathbb{E}[(\mathbb{E}[e^{-\lambda\beta(T)}])^{Q-1}]\\
&=e^{-\lambda}\mathbb{E}\Big[\left(e^{-\lambda}\left(1+(T_{\alpha}(1-e^{-\lambda}))^{\alpha-1}\right)^{\frac{\alpha}{1-\alpha}}\right)^{Q-1}\Big]
\end{align*}
where $T_{\alpha}=\left(\frac{T}{\alpha\Gamma(\alpha)}\right)^\frac{1}{\alpha-1}.$ Denote $\Delta:=e^{-\lambda}(1+(T_{\alpha}(1-e^{-\lambda}))^{\alpha-1})^{\frac{\alpha}{1-\alpha}}$. 
Using (\ref{lq}), one gets

\begin{align*}\mathbb{E}[e^{-\lambda Y}]&=\mathbb{E}[e^{-\lambda}(1-(1-\Delta)^{\alpha-1})]\\
&=I_1+I_2,\end{align*}
where $I_1=\mathbb{E}[e^{-\lambda Y}\ind_{T_{\alpha}>\lambda^{-\frac{1}{2}}}],\,\,I_2=\mathbb{E}[e^{-\lambda Y}\ind_{T_{\alpha}\leq\lambda^{-\frac{1}{2}}}]$. The density (\ref{ft}) of $T$ implies, when $\lambda\to0+$

\begin{equation}\label{i1}I_1=O(\lambda^{\frac{\alpha}{2}})=o(\lambda^{(\alpha-1)^2}).
\end{equation}
Notice that there exists $C_{\ref{i2}}>0$ such that for any $0<\varepsilon<1$, if $\lambda$ is small enough, we have 

\begin{equation}\label{i2}|\Delta\ind_{T_{\alpha}\leq \lambda^{-\frac{1}{2}}}-(1+\frac{\alpha}{1-\alpha}(T_{\alpha}\lambda)^{\alpha-1})\ind_{T_{\alpha}\leq \lambda^{-\frac{1}{2}}}|\leq \varepsilon(T_{\alpha}\lambda)^{\alpha-1}\ind_{T_{\alpha}\leq \lambda^{-\frac{1}{2}}}+C_{\ref{i2}}\lambda. \end{equation}
Letting $\lambda\to 0+$ and using (\ref{i1}), (\ref{i2}), one obtains
$$\mathbb{E}[e^{-\lambda Y}]=1-(\frac{\alpha}{\alpha-1})^{\alpha-1}\lambda^{(\alpha-1)^2}\mathbb{E}[T_{\alpha}^{(\alpha-1)^2}]+o(\lambda^{(\alpha-1)^2}).$$
Thanks to Lemma 5.4 of \cite{BBS08} or Theorem 8.1.6 of \cite{B89}, we get 

$$\mathbb{P}(Y>k)\sim\frac{(\frac{\alpha}{\alpha-1})^{\alpha-1}\mathbb{E}[T_{\alpha}^{(\alpha-1)^2}]}{\Gamma(1-(\alpha-1)^2)}k^{-(\alpha-1)^2}=\frac{\int_{0}^{\infty}t^{{\alpha-1}}f_T(t)dt}{((\alpha-1)\Gamma(\alpha))^{\alpha-1}\Gamma(1-(\alpha-1)^2)}k^{-(\alpha-1)^2}$$
when $k\to\infty.$
\end{proof}
\section{The largest block}

In this section, we aim to prove Theorem \ref{max} and Corollary \ref{max1}.
We start with a technical lemma.
{\begin{lem}\label{lemtec}
Let $k>0$ and $X$ be a random variable distributed according to $\mu$.  Define $\mathcal{X}$ such that conditional on $X$, $\mathcal{X}$ is a Poisson variable with parameter $\frac{X}{k}$. Then for any {$x> 0,$}
$$\lim_{n\to\infty}n\mathbb{P}(\mathcal{X}\geq xn^{\frac{1}{\alpha}})=\frac{(kx)^{-\alpha}}{\Gamma(2-\alpha)}.$$ 
\end{lem}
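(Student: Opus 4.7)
Let $a_n=xn^{1/\alpha}$ and condition on $X$:
\[
n\mathbb{P}(\mathcal{X}\geq a_n)=n\int_0^\infty \mathbb{P}(\mathrm{Poisson}(y/k)\geq a_n)\,\mu(dy).
\]
The heuristic is that the Poisson variable $\mathcal{X}$ concentrates around its mean $X/k$, so $\{\mathcal{X}\geq a_n\}$ is effectively the event $\{X\geq ka_n\}$, whose probability is governed by the tail estimate \eqref{eq:aueuemu}: $\mu([z,\infty))\sim z^{-\alpha}/\Gamma(2-\alpha)$ as $z\to\infty$. Substituting $z=ka_n=kxn^{1/\alpha}$ gives $n\cdot(ka_n)^{-\alpha}/\Gamma(2-\alpha)\to(kx)^{-\alpha}/\Gamma(2-\alpha)$, which is exactly the target. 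The work consists in replacing this heuristic by sharp Chernoff bounds.

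Fix $\varepsilon\in(0,1)$ and split $(0,\infty)$ into $I_-=(0,(1-\varepsilon)ka_n]$, $I_0=((1-\varepsilon)ka_n,(1+\varepsilon)ka_n]$ and $I_+=((1+\varepsilon)ka_n,\infty)$. On $I_-$, monotonicity of the Poisson tail in $\lambda$ and the standard Chernoff bound give
\[
\mathbb{P}(\mathrm{Poisson}(y/k)\geq a_n)\leq \exp\bigl(-a_n h_1(\varepsilon)\bigr),\qquad h_1(\varepsilon)=-\ln(1-\varepsilon)-\varepsilon>0,
\]
uniformly in $y\in I_-$. On $I_+$ the symmetric bound yields
\[
\mathbb{P}(\mathrm{Poisson}(y/k)< a_n)\leq \exp\bigl(-a_n h_2(\varepsilon)\bigr),\qquad h_2(\varepsilon)=\varepsilon-\ln(1+\varepsilon)>0,
\]
uniformly in $y\in I_+$.

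Combining these with the trivial bounds $\mathbb{P}(\cdots)\in[0,1]$ on $I_0$ gives
\[
\mu(I_+)\bigl(1-e^{-a_n h_2(\varepsilon)}\bigr)\leq \mathbb{P}(\mathcal{X}\geq a_n)\leq \mu([(1-\varepsilon)ka_n,\infty))+e^{-a_n h_1(\varepsilon)}.
\]
Multiplying by $n$, the exponential remainders are $n\exp(-h_j(\varepsilon)xn^{1/\alpha})\to 0$, while \eqref{eq:aueuemu} yields $n\mu([(1\pm\varepsilon)ka_n,\infty))\to ((1\pm\varepsilon)kx)^{-\alpha}/\Gamma(2-\alpha)$. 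Hence
\[
\frac{((1+\varepsilon)kx)^{-\alpha}}{\Gamma(2-\alpha)}\leq \liminf_{n\to\infty} n\mathbb{P}(\mathcal{X}\geq a_n)\leq \limsup_{n\to\infty} n\mathbb{P}(\mathcal{X}\geq a_n)\leq \frac{((1-\varepsilon)kx)^{-\alpha}}{\Gamma(2-\alpha)}.
\]
Letting $\varepsilon\downarrow 0$ closes the sandwich. The only delicate point is making sure the Poisson deviation rates $h_j(\varepsilon)$ yield genuine exponential decay on the scale $a_n=xn^{1/\alpha}$, which indeed beats the $n$-prefactor for any fixed $\varepsilon>0$; once this is checked the rest is a direct application of \eqref{eq:aueuemu}.
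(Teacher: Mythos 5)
Your proof is correct and follows essentially the same strategy as the paper's: both split according to whether $X$ lies below, near, or above $ka_n$, kill the off-regimes by exponential Poisson deviation bounds (which beat the prefactor $n$ since $a_n=xn^{1/\alpha}\to\infty$), and extract the limit from the regular-variation tail $\mu([z,\infty))\sim z^{-\alpha}/\Gamma(2-\alpha)$. The only difference is cosmetic: you obtain the Poisson large-deviation estimates from standard Chernoff bounds together with stochastic monotonicity in the parameter, whereas the paper derives them from the Gamma-integral representation $\mathbb{P}(\mathcal{X}\geq M+1)=\mathbb{E}[\int_0^{X/k}e^{-t}t^{M}/M!\,dt]$ and Stirling's formula.
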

\begin{proof}
First of all, let us consider two technical results. Let {$M=\lfloor xn^{\frac{1}{\alpha}}\rfloor$}.

$1)$ Using Stirling's formula for $M!$ and a change of variable, we get that for any $0<\beta<1$, 
\begin{align}\label{alpha1}
\int_0^{M\beta}e^{-t}\frac{t^{M}}{M!}dt&=\int_0^{M\beta}e^{M-t}\left(\frac{t}{M}\right)^M(2\pi M)^{-\frac{1}{2}}(1+O(M^{-1}))dt\nonumber\\
&=\int_0^{\beta}e^{M(1-t+\ln t)}{(\frac{M}{2\pi})^{\frac{1}{2}}}(1+O(M^{-1}))dt\nonumber\\
&= O( e^{M(1-\beta+\ln \beta)}M^{\frac{1}{2}}).
\end{align}
The last equality is due to the fact that $1-t+\ln t$ is negative and increasing for $t\in(0,1)$.

$2)$ If $\beta>1$, then
\begin{align*}
\int_{M\beta}^{\infty}e^{-t}\frac{t^{M}}{M!}dt
%&=\int_{M\beta}^{\infty}e^{M-t}(\frac{t}{M})^M(2\pi M)^{-1/2}(1+O(M^{-1}))dt\nonumber\\
&=\int_{\beta}^{\infty}e^{M(1-t+\ln t)}{(\frac{M}{2\pi})^{\frac{1}{2}}}(1+O(M^{-1}))dt.
\end{align*}
Notice that $1-t+\ln t$ is strictly decreasing and concave over $[\beta, \infty]$, then there exists a positive number $\varepsilon$ such that $1-t+\ln t\leq -\varepsilon t$ for any $t\geq \beta$. Therefore, 
\begin{equation}\label{alpha2}\int_{M\beta}^{\infty}e^{-t}\frac{t^{M}}{M!}dt\leq \int_{\beta}^{\infty}e^{-\varepsilon Mt}{(\frac{M}{2\pi})^{1/2}}(1+O(M^{-1}))dt=O(e^{-\varepsilon M\beta}M^{{-1/2}}).\end{equation}
Now we can turn to the study of $\mathcal{X}.$ Thanks to successive integrations by parts,
\begin{equation}\label{m+1}
\mathbb{P}(\mathcal{X}\geq M+1)=\mathbb{E}[\int_0^{\frac{X}{k}}e^{-t}\frac{t^{M}}{M!}dt].\end{equation}
Let $0< \beta_1<1$ and $\beta_2>1$, then we have 
$$
\P(\mathcal{X}\geq M+1)
=I_1+I_2+I_3,
$$
where 
$$I_1=\mathbb{E}[\int_0^{\frac{X}{k}}e^{-t}\frac{t^{M}}{M!}dt\ind_{\{X< kM\beta_1\}}],$$ 
$$I_2=\mathbb{E}[\int_0^{\frac{X}{k}}e^{-t}\frac{t^{M}}{M!}{dt}\ind_{\{kM\beta_1\leq X\leq kM\beta_2\}}],$$ 
$$I_3=\mathbb{E}[\int_0^{\frac{X}{k}}e^{-t}\frac{t^{M}}{M!}dt\ind_{\{X>kM\beta_2\}}].$$
Now let $n$ tend to infinity.
By (\ref{alpha1}), we get
\begin{equation}\label{partie1}0\leq nI_1\leq n\mathbb{P}(X<kM\beta_1)\int_0^{M\beta_1}e^{-t}\frac{t^M}{M!}dt\longrightarrow 0
%,\,\,n\to \infty
.
\end{equation}
 It is easy to verify that $\int_0^{\infty}e^{-t}\frac{t^M}{M!}dt=1$ for any integer $M\geq 0$. Then using together \eqref{eq:aueuemu} and (\ref{alpha2}), we obtain
 \begin{equation}\label{partie2}\lim_{n\to\infty} nI_3= {\lim_{n\to\infty}n}\mathbb{P}(X>kM\beta_2)=\frac{(kx\beta_2)^{-\alpha}}{\Gamma(2-\alpha)}.\end{equation}
 In the same way, we have
 \begin{equation}\label{partie3}0\leq nI_2\leq {n}\mathbb{P}(kM\beta_1\leq X\leq kM\beta_2)\longrightarrow {\frac{(kx\beta_1)^{-\alpha}}{\Gamma(2-\alpha)}-\frac{(kx\beta_2)^{-\alpha}}{\Gamma(2-\alpha)}},\,\,n\to \infty.\end{equation}
 If $\beta_1$ and $\beta_2$ are close enough to $1$, $nI_2$ can be bounded by an arbitrarily small positive number for $n$ large enough. 
 Combining (\ref{partie1}), (\ref{partie2}) and (\ref{partie3}), we conclude this lemma.
\end{proof}

To prove Theorem \ref{max}, we will use classical relations between  Beta-coalescents and continuous-state branching processes (CSBPs) developed in \cite{les705} (see also Section 2 of \cite{BBS08}).
We give a short summary to provide a minimal set of tools. A continuous-state branching process $(Z(t), t\geq 0)$ is a $[0,\infty]$-valued Markov process (in continuous time) whose transition functions $p_t(x,\cdot)$ satisfy the branching property
$$p_t(x+y,\cdot)=p_t(x,\cdot)*p_t(y,\cdot), \quad\text{for all } x,y\geq 0.$$
For each $t\geq 0$, there exists a function $u_t:[0,\infty)\to \mathbb{R}$ such that 
\begin{equation}\label{exp}\mathbb{E}[e^{-\lambda Z(t)}|Z(0)=a]=e^{-au_t(\lambda)}.\end{equation}
If almost surely, the process has no instantaneous jump to infinity, the function $u_t$ satisfies the following differential equation
$$\frac{\partial u_t(\lambda)}{\partial t}=-\Psi(u_t(\lambda)),$$
where $\Psi:[0, \infty)\longrightarrow \mathbb{R}$ is a function of the form 
$$\Psi(u)={\gamma} u+\beta u^2+\int_0^{\infty}(e^{-xu}-1+xu\ind_{\{x\leq 1\}})\pi(dx),$$
where ${\gamma}\in \mathbb{R}, \beta\geq 0$ and $\pi$ is a L\'evy measure on $(0, \infty)$ satisfying $\int_0^{\infty}(1\wedge x^2)\pi(dx)<\infty.$ The function $\Psi$ is called the branching mechanism of the CSBP.

As explained in \cite{BLG00}, a CSBP can be extended to a two-parameter random process $(Z(t,a), t\geq 0, a\geq 0)$ with $Z(0,a)=a.$ For fixed $t$, $(Z(t,a), a\geq 0)$ turns out to be a subordinator with Laplace exponent $\lambda\mapsto u_t(\lambda)$ thanks to (\ref{exp}). 

There exists a measure-valued process $(M_t, t\geq 0)$ taking values in the set of finite measures on $[0,1]$ which characterizes $(Z(t,a),t\geq 0, 0\leq a\leq 1)$. More precisely, $(M_t([0,a]), t\geq 0, 0\leq a\leq 1)$ has the same finite-dimensional distributions as $(Z(t,a), t\geq 0, 0\leq a\leq 1)$. Hence $(M_t([0,a]), 0\leq a\leq 1)$ is a subordinator with Laplace exponent $\lambda\mapsto u_t(\lambda)$  and $Z(t)=M_t([0,1])$ is a CSBP with branching mechanism $\Psi$ started at $M_0([0,1])=1$. In particular, if the branching mechanism is $\Psi(\lambda)=\lambda^{\alpha}$ and hence L\'evy measure is given by $\pi(dx)=\frac{\alpha(\alpha-1)}{\Gamma(2-\alpha)}x^{-1-\alpha}dx$, for all $t>0,$ $M_{t}$ consists only of finite number of atoms. For the construction of $(M_t([0,a]), t\geq 0, 0\leq a\leq 1)$, we refer to \cite{BBS07, les705, DK99}.

A deep relation has been revealed in \cite{les705} between the Beta-coalescent and the CSBP with branching mechanism $\Psi(\lambda)=\lambda^{\alpha}$ and L\'evy measure $\pi(dx)=\frac{\alpha(\alpha-1)}{\Gamma(2-\alpha)}x^{-1-\alpha}dx$.  The relationship is described by the following two lemmas which are respectively Lemma 2.1 and 2.2 of \cite{BBS08} and will be important in the sequel.

To save notations, from now on, $(Z(t),t\geq 0)$ always denotes a continuous state branching process  $(Z(t,1),t\geq 0)$. 
\begin{lem}\label{cle1}
Assume $(Z(t), t\geq 0)$ is a CSBP with branching mechanism
$\Psi(\lambda)=\lambda^{\alpha}$ and let
$(M_t, t\geq 0)$ be its associated measure-valued process. 
If $(\Pi(t), t\geq 0)$ is a Beta-coalescent and $(\Theta(t),t\geq 0)$ is the associated ranked coalescent, then for all $t>0$, the distribution of $\Theta(t)$ is the same as the distribution of the sizes of the atoms of the measure $\frac{M_{R^{-1}(t)}}{Z(R^{-1}(t))}$, ranked in decreasing order. 
Here $R(t)=(\alpha-1)\alpha\Gamma(\alpha)\int_0^tZ(s)^{1-\alpha}ds$ and $R^{-1}(t)=\inf\{s:R(s)>t\}$.
\end{lem}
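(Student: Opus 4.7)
The strategy is to construct both the ranked coalescent $\Theta$ and the normalized measure-valued process $M_t/Z(t)$ on a common probability space and verify they agree in law after the time change $R$. The key input is the Bertoin--Le Gall flow representation: attached to the $\alpha$-stable CSBP $(Z(t),t\geq 0)$ is a measure-valued process $(M_t,t\geq 0)$ on $[0,1]$ such that $a\mapsto M_t([0,a])$ is, for each fixed $t$, a subordinator with Laplace exponent $u_t$. Because the branching mechanism $\Psi(\lambda)=\lambda^\alpha$ has infinite-activity $\alpha$-stable jump structure, for every $t>0$ the measure $M_t$ is purely atomic with at most countably many atoms, each atom representing a distinct ancestor at time $0$ of the population alive at time $t$. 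Normalizing by $Z(t)=M_t([0,1])$ turns those atom sizes into relative frequencies in $(0,1]$ summing to $1$.

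Next I would apply Kingman's paintbox construction to the random probability measure $M_t/Z(t)$ in order to obtain an exchangeable random partition $\Pi'(t)$ of $\mathbb{N}$ whose ranked asymptotic block frequencies are exactly the ranked atoms of $M_t/Z(t)$, call this sequence $\Theta'(t)$. The plan then reduces to showing that $(\Pi'(R^{-1}(t)),t\geq 0)$ has the law of the Beta-coalescent $\Pi$, for then $\Theta(t)\stackrel{(d)}{=}\Theta'(R^{-1}(t))$ follows immediately, which is the claimed statement.

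The identification goes through matching coalescence rates at jumps of $M$. A jump of $M$ at time $t$ of absolute size $y$ adds mass $y$ at a single location in $[0,1]$, merging into one atom all paintbox particles landing at that location. In relative terms this corresponds to a fraction $x=y/(Z(t-)+y)$ of the total mass after the jump. Conditional on $\Pi'(t-)$ having $b$ blocks with relative frequencies summing to $1$, the probability that exactly $k$ of them get swallowed into the new macroscopic atom is $\binom{b}{k}x^{k}(1-x)^{b-k}$. Integrating against the L\'evy measure $\pi(dy)=\frac{\alpha(\alpha-1)}{\Gamma(2-\alpha)}y^{-1-\alpha}\,dy$ of the CSBP and performing the change of variable $y=Z(t-)x/(1-x)$ produces a total $k$-fold merger rate proportional to $Z(t-)^{1-\alpha}\int_0^1 x^{k-2}(1-x)^{b-k}\,\Lambda(dx)=Z(t-)^{1-\alpha}\lambda_{b,k}$, where the Beta$(2-\alpha,\alpha)$ density appears up to the normalizing constant $(\alpha-1)\alpha\Gamma(\alpha)$. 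The derivative $R'(t)=(\alpha-1)\alpha\Gamma(\alpha)Z(t)^{1-\alpha}$ is designed precisely so that, in the time scale $s=R(t)$, the factor $Z(t-)^{1-\alpha}$ cancels and the rate becomes exactly $\lambda_{b,k}$ as prescribed by \eqref{lambda}.

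The main obstacle I expect is the rigorous generator computation at jump times of $M$: one must control how the atom decomposition of $M_t/Z(t)$ is modified when a jump occurs (a new atom appearing and an existing atom being boosted are superficially different but reduce, at the paintbox level, to the same merger rule), and then transfer the generator of the partition process driven by $(M_t)$ to that of the $\Lambda$-coalescent. A clean way is to use the explicit Poissonian construction of the $\alpha$-stable measure flow together with a consistency reduction to the finite restrictions $\Pi'^{(n)}$, where both sides become continuous-time Markov chains on a finite state space and the generators can be identified term by term using \eqref{lambda}.
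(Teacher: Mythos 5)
This lemma is not proved in the paper: it is quoted verbatim as Lemma~2.1 of \cite{BBS08}, whose proof rests on the main theorem of Birkner et al.\ \cite{les705}. Your rate heuristic is the standard (and correct) explanation of \emph{why} the Beta-coalescent is tied to the $\alpha$-stable CSBP, and the constants do work out: a reproduction event of size $y$ at CSBP-time $s$ marks each ancestral lineage independently with probability $x=y/(Z(s-)+y)$, and integrating $x^{k}(1-x)^{b-k}$ against the jump intensity $Z(s-)\pi(dy)$ after the change of variables gives $\alpha(\alpha-1)\Gamma(\alpha)Z(s-)^{1-\alpha}\lambda_{b,k}$, which the time change $R$ turns into $\lambda_{b,k}$. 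Note, however, that you integrate against $\pi(dy)$ rather than $Z(s-)\pi(dy)$; the factor $Z(s-)$ coming from the branching property is essential, since without it the computation yields $Z^{-\alpha}\lambda_{b,k}$ instead of the $Z^{1-\alpha}\lambda_{b,k}$ you assert.

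The genuine gap is in the mechanism you propose for making this rigorous. The forward evolution of the atoms of $M_t$ involves no mergers at all: by the flow property $M_{t+s}([0,a])=\tilde{Z}(s,Z(t,a))$ for an independent subordinator $\tilde{Z}(s,\cdot)$, so each atom of $M_t$ evolves as an independent CSBP and atoms only grow, shrink or die --- a jump of $Z$ is one existing atom being boosted, never a new atom swallowing $k$ old ones. Hence there is no ``generator computation at jump times of $M$'' producing $k$-fold collisions of paintbox blocks, and applying the paintbox to $M_t/Z(t)$ with a fixed sequence of uniforms does not yield a coalescing partition process (boundary particles merely migrate between blocks as the frequencies change). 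The coalescent structure lives in the backward-in-time genealogy --- which sampled individual descends from which time-$0$ ancestor --- and identifying that genealogy with the atom decomposition of the forward measure-valued process is exactly the content of the Donnelly--Kurtz lookdown construction used in \cite{les705}; this is the missing (and hard) part of the argument. A further obstruction even for the one-dimensional statement of the lemma is that $R^{-1}(t)$ is a random time depending on the whole path of $Z$, so the claimed identity in law cannot be reduced to a marginal rate computation at a fixed time.
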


\begin{lem}\label{cle2}

Assume $\Psi(\lambda)=\lambda^{\alpha}$. For any $t\geq0$, let $D(t) $ be the number of atoms of $M_t$, and let $J(t)=(J_1(t),\cdots, J_{D(t)}(t))$ be the sizes of the atoms of $M_t$, ranked in decreasing order. Then $D(t)$ is Poisson with mean $\gamma_t=\left((\alpha-1)t\right)^{-\frac{1}{\alpha-1}}$. 
Moreover, conditional on $D(t)=k$, the distribution of $J(t)$ is the same as the distribution of $(\gamma_t^{-1}X_1, \cdots, \gamma_t^{-1}X_k)$ where $X_1,\cdots, X_k$ are obtained by picking $k$ i.i.d. random variables with distribution $\mu$ and then ranking them in decreasing order.
\end{lem}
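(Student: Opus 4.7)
The plan is to extract everything from an explicit solution of the Kolmogorov equation for $u_t$ together with the fact, recalled just before the statement, that $a\mapsto M_t([0,a])$ is a subordinator on $[0,1]$ with Laplace exponent $u_t$. First I would solve $\partial_t u_t(\lambda)=-\Psi(u_t(\lambda))=-u_t(\lambda)^{\alpha}$ with initial datum $u_0(\lambda)=\lambda$ by separation of variables. Integrating $u^{-\alpha}\,du=-dt$ and using $u_0(\lambda)=\lambda$ yields
$$u_t(\lambda)=\bigl(\lambda^{1-\alpha}+(\alpha-1)t\bigr)^{-\frac{1}{\alpha-1}}.$$
Since $1-\alpha<0$, $\lambda^{1-\alpha}\downarrow 0$ as $\lambda\to\infty$, so $u_t(\lambda)\uparrow \gamma_t=((\alpha-1)t)^{-1/(\alpha-1)}<\infty$. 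This boundedness is the structural input that drives the whole statement.

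Next I would read off the L\'evy--Khintchine representation of the subordinator $(M_t([0,a]))_{a\geq 0}$ from $u_t$. Since $u_t(0)=0$ (no killing), $u_t$ is bounded (so the drift component vanishes), and $\sup_\lambda u_t(\lambda)=\gamma_t$, the L\'evy measure $\nu_t$ of this subordinator is finite with total mass exactly $\gamma_t$. Hence $a\mapsto M_t([0,a])$ is compound Poisson. The atoms of $M_t$ in $[0,1]$ are precisely the jumps of this subordinator on the interval $[0,1]$, so $D(t)$ is the number of jumps of a compound Poisson process of intensity $\gamma_t$ on an interval of length $1$, i.e.\ $D(t)\sim\mathrm{Poisson}(\gamma_t)$. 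Conditionally on $D(t)=k$, the unordered atom sizes are i.i.d.\ with common law $\nu_t/\gamma_t$.

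It only remains to identify $\nu_t/\gamma_t$. From $u_t(\lambda)=\int_0^\infty(1-e^{-\lambda x})\nu_t(dx)$ I would write
$$\int_0^\infty e^{-\lambda x}\,\frac{\nu_t(dx)}{\gamma_t}=1-\frac{u_t(\lambda)}{\gamma_t}.$$
Substituting the explicit form of $u_t$ and $\gamma_t$, a direct manipulation gives $u_t(\lambda)/\gamma_t=(1+(\lambda/\gamma_t)^{1-\alpha})^{-1/(\alpha-1)}$, so that the right-hand side equals $\cl_\mu(\lambda/\gamma_t)$ by \eqref{eq:mulap}. Thus $\nu_t/\gamma_t$ is the push-forward of $\mu$ under $y\mapsto \gamma_t^{-1}y$; equivalently, the unordered atom sizes conditionally on $D(t)=k$ are distributed as $(\gamma_t^{-1}X_1,\dots,\gamma_t^{-1}X_k)$ with $X_1,\dots,X_k$ i.i.d.\ of law $\mu$. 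Re-ranking these values in decreasing order yields $J(t)$, which is the claim.

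The only delicate step is the passage from the boundedness of $\lambda\mapsto u_t(\lambda)$ to the compound-Poisson structure of the subordinator (and hence to the Poisson law of $D(t)$); everything else is bookkeeping with the closed form of $u_t$. I do not expect a real obstacle there, since it is a textbook consequence of the L\'evy--Khintchine formula for subordinators applied to an exponent that vanishes at $0$, has no drift, and is bounded at infinity.
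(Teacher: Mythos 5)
Your proof is correct, but note that the paper itself gives no proof of this statement: it is imported verbatim as Lemma~2.2 of \cite{BBS08}, together with Lemma~2.1 there, as part of the toolbox relating the Beta-coalescent to the $\Psi(\lambda)=\lambda^{\alpha}$ CSBP. Your derivation --- solving $\partial_t u_t=-u_t^{\alpha}$ to get $u_t(\lambda)=(\lambda^{1-\alpha}+(\alpha-1)t)^{-1/(\alpha-1)}$, noting $u_t(0+)=0$, zero drift and $\sup_{\lambda}u_t(\lambda)=\gamma_t<\infty$ so that $a\mapsto M_t([0,a])$ is compound Poisson with jump intensity $\gamma_t$, and then identifying $1-u_t(\lambda)/\gamma_t=\cl_\mu(\lambda/\gamma_t)$ --- is essentially the argument used in \cite{BBS08}, and the computation $\lambda^{1-\alpha}/((\alpha-1)t)=(\lambda/\gamma_t)^{1-\alpha}$ checks out. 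The only input you take for granted is the Bertoin--Le Gall fact, also assumed by the paper, that $a\mapsto M_t([0,a])$ is a subordinator with Laplace exponent $u_t$; granting that, the passage from boundedness of $u_t$ to the compound-Poisson structure is indeed the routine L\'evy--Khintchine bookkeeping you describe.
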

\begin{rem}\label{save1}
From the relation between $(M_t,t\geq 0)$ and $(Z(t,a), t\geq0, 0\leq a\leq 1)$ and also the fact that for all $t>0,$ $M_{t}$ consists only of finite number of atoms (the number is actually $D(t)$), for a given $t>0$, there exist $0\leq a_1,\cdots,a_{D(t)}\leq1 $ such that $\{Z(t,a_1)-Z(t,a_1-), \cdots, Z(t,a_{D(t)})-Z(t,a_{D(t)}-)\}$ are exactly the values of the atoms of $M_{t}$. By the strong Markov property of $(Z(t,a), t\geq0, 0\leq a\leq 1)$, for $s\geq t$, the jumping at $s$ can only happen at the points $\{(s,a_1), \cdots, (s,a_{D(t)})\}$.  Therefore, $D(t)$ decreases on $t$. 
\end{rem}
The idea of the proof of Theorem \ref{max} is as follows: Let $t_n=n^{1-\alpha}t$. Lemma \ref{cle1} shows that $\Theta(t_n)$ has the same law as $\frac{M_{R^{-1}(t_n)}}{Z(R^{-1}(t_n))}$. 
Moreover it is proved in Lemma 4.2 of \cite{BBS08} that $\frac{R^{-1}(t_n)}{t_n}\stackrel{P}{\to}\frac{1}{(\alpha-1)\alpha\Gamma(\alpha)}$, as $n$ goes to $\infty$. Hence one can compare the block sizes at time $t_n$ to those at time $R^{-1}((\alpha-1)\alpha\Gamma(\alpha)t_n).$  To this, we use the paintbox construction and the closeness between the measures $\frac{M_{t_n}}{Z(t_n)}$ and $\frac{M_{R^{-1}((\alpha-1)\alpha\Gamma(\alpha)t_n)}}{Z(R^{-1}((\alpha-1)\alpha\Gamma(\alpha)t_n))}$.  This idea can be executed through two steps.

\textbf{1) Analysis of the largest block size at time $t_n$ with the measure $\frac{M_{t_n}}{Z(t_n)}$:}
If $D(t_n)\neq 0$, let $\bar{J}_i(t_n)=\frac{J_i(t_n)}{Z(t_n)} $ for $1\leq i\leq D(t_n)$. Let $\{d_1(t_n), \cdots, d_{D(t_n)}(t_n)\}$ be an interval partition of $[0,1]$ such that the Lebesgue measure of $d_i(t_n)$ is $\bar{J}_i(t_n)$.  Build a partition of ${\mathbb{N}_n}$ from a paintbox associated with $\{d_1(t_n), \cdots, d_{D(t_n)}(t_n)\}$.  Let $N_i$ be the number of integers in $d_i(t_n) $ and $N=\max\{N_i: 1\leq i\leq D(t_n)\}$.

{\begin{lem}\label{lem:baseth1.5}
Let $x> 0$. Then

$1)$ $$\lim_{n\to\infty}\mathbb{P}(N\leq xn^{1/\alpha})=\exp(- \frac{(\alpha-1)tx^{-{\alpha}}}{\Gamma(2-\alpha)}).$$

$2)$ Let $0<y<x$. Then 
\begin{equation}\label{244}
\lim_{n\to\infty}\mathbb{P}(\exists i: J_{i}(t_n)< n^{\frac{1-\alpha}{\alpha}}y, N_i\geq xn^{\frac{1}{\alpha}})=0.
\end{equation}
\end{lem}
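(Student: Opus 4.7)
The strategy is to combine the Poisson representation of the atoms of $M_{t_n}$ from Lemma~\ref{cle2}, a Poisson approximation of the multinomial allocation of the $n$ particles, and the tail estimate of Lemma~\ref{lemtec}. Write $k=\bigl((\alpha-1)t\bigr)^{-1/(\alpha-1)}$, so that $\gamma_{t_n}=nk$ and $k^{1-\alpha}=(\alpha-1)t$. By Lemma~\ref{cle2}, conditionally on $D(t_n)=d$ the atoms satisfy $J_i(t_n)=\gamma_{t_n}^{-1}X_i$ with $X_1,\ldots,X_d$ i.i.d.\ of law $\mu$, so the Binomial parameter of $N_i$ is $n\bar J_i(t_n)=X_i/(kZ(t_n))$. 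A preliminary step is the observation that $Z(t_n)\stackrel{P}{\to}1$: solving $\partial_t u_t=-u_t^{\alpha}$ with $u_0(\lambda)=\lambda$ yields $u_t(\lambda)=(\lambda^{1-\alpha}+(\alpha-1)t)^{1/(1-\alpha)}\to\lambda$ as $t\to 0$, hence $\E[e^{-\lambda Z(t_n)}]\to e^{-\lambda}$.

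For part~1, on the event $|Z(t_n)-1|<\varepsilon$ and after truncating to atoms with $X_i\leq L$, a standard Binomial-to-Poisson coupling replaces each $N_i$ with an independent Poisson variable $\mathcal{X}_i$ of parameter $X_i/k$, with a total variation error of order $L^2/n$ per atom. The resulting marked point process $\{(X_i,\mathcal{X}_i)\}$ is a Poisson process of intensity $\gamma_{t_n}\,\mu(dx)$ with independent Poisson marks, and thinning by the event $\mathcal{X}_i\geq xn^{1/\alpha}$ yields a Poisson variable with mean
\[
\gamma_{t_n}\,\P(\mathcal{X}\geq xn^{1/\alpha})\;\sim\;nk\cdot\frac{(kx)^{-\alpha}}{n\,\Gamma(2-\alpha)}\;=\;\frac{k^{1-\alpha}x^{-\alpha}}{\Gamma(2-\alpha)}\;=\;\frac{(\alpha-1)t\,x^{-\alpha}}{\Gamma(2-\alpha)},
\]
by Lemma~\ref{lemtec}. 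Sending $L\to\infty$ (controlling the tail $X_i>L$ via $\mu([L,\infty))\sim L^{-\alpha}/\Gamma(2-\alpha)$ from \eqref{eq:aueuemu}) and $\varepsilon\to 0$ gives $\P(N\leq xn^{1/\alpha})\to\exp\bigl(-(\alpha-1)tx^{-\alpha}/\Gamma(2-\alpha)\bigr)$.

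For part~2, the condition $J_i(t_n)<yn^{(1-\alpha)/\alpha}$ rewrites as $X_i<ykn^{1/\alpha}$, so on $\{Z(t_n)\geq 1/(1+\varepsilon)\}$ the Binomial parameter of the corresponding $N_i$ satisfies $n\bar J_i\leq (1+\varepsilon)yn^{1/\alpha}$. Choosing $\varepsilon$ so that $(1+\varepsilon)y<x$, a Chernoff bound for the upper tail of the Binomial yields, for some $c=c(x,y,\varepsilon)>0$,
\[
\P\bigl(N_i\geq xn^{1/\alpha}\,\big|\,X_i<ykn^{1/\alpha},\,Z(t_n)\geq 1/(1+\varepsilon)\bigr)\leq \exp(-cn^{1/\alpha}).
\]
A union bound over the at most $D(t_n)$ indices, followed by taking expectation and using $\E D(t_n)=nk$, bounds the probability in \eqref{244} by $nk\exp(-cn^{1/\alpha})+\P(Z(t_n)<1/(1+\varepsilon))$, which tends to $0$.

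The main obstacle is making the Binomial-to-Poisson approximation effective uniformly in the random atom sizes $X_i$, since very large $X_i$ invalidate a single-parameter Poisson coupling. The truncation $X_i\leq L$ together with the Pareto-type tail \eqref{eq:aueuemu} resolves this: atoms with $X_i>L$ contribute $O(L^{-\alpha})$ uniformly in $n$ to both the true and the approximating probabilities, and letting $L\to\infty$ at the end closes the argument. Everything else, namely the Poisson limit for the thinned marked PPP together with the Chernoff tail bounds, is standard.
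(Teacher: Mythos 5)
Your overall skeleton (Poisson representation of the atoms from Lemma \ref{cle2}, the identification of the per-atom parameter $X_i/k$ with $k=((\alpha-1)t)^{-1/(\alpha-1)}$, the thinning computation $\gamma_{t_n}\P(\mathcal{X}\geq xn^{1/\alpha})\to(\alpha-1)tx^{-\alpha}/\Gamma(2-\alpha)$ via Lemma \ref{lemtec}, and the fact that $Z(t_n)\stackrel{P}{\to}1$) is exactly the paper's, and your part 2 is correct and essentially identical to the paper's argument (the paper uses the explicit estimates \eqref{m+1}--\eqref{alpha1} in place of your generic Chernoff bound, and $\E[(1-o(1/n))^{D(t_n)}]\to1$ in place of your union bound; both work).

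The gap is in how you pass from the true counts $N_i$ to independent Poisson marks in part 1. First, the per-atom Binomial-to-Poisson total variation bound is of order $n\bar J_i^{\,2}\leq L^2/(nk^2Z^2)$, but there are $D(t_n)\sim nk$ atoms, so the summed error is of order $L^2/k$ --- a constant, not $o(1)$; moreover the $N_i$ are multinomial, not independent Binomials, so even these marginal bounds do not control $\P(\cap_i\{N_i\leq xn^{1/\alpha}\})$. Second, and more seriously, the truncation to $X_i\leq L$ with $L$ fixed discards precisely the atoms that produce the limit: the exceedance $\mathcal{X}_i\geq xn^{1/\alpha}$ is asymptotically driven by atoms with $X_i$ of order $n^{1/\alpha}$ (this is visible in the proof of Lemma \ref{lemtec}, where the whole limit comes from the regime $X>kM\beta_2$), so for fixed $L$ the truncated probability tends to $1$ and the discarded atoms carry the entire limiting mass --- their contribution is not $O(L^{-\alpha})$ (indeed the expected number of atoms with $X_i>L$ is $\sim nkL^{-\alpha}/\Gamma(2-\alpha)\to\infty$). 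The repair is the one the paper uses: instead of coupling atom by atom, Poissonize the total number of thrown particles (throw a Poisson$(nZ(t_n))$ number of particles), which makes the counts $\mathcal{N}_i$ \emph{exactly} independent Poisson$(nJ_i(t_n))=$ Poisson$(X_i/k)$ with no truncation needed; your thinning computation then applies verbatim, and the comparison between the Poisson$(nZ(t_n))$-particle model and the $n$-particle model is handled by monotonicity of the maximal count in the number of particles together with $Z(t_n)\stackrel{P}{\to}1$.
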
}

\begin{proof}
$1)$ It is well known that if we throw a Poisson number of parameter $nZ(t_n)$ on $[0,1]$, the number of integers in $d_i(t_n)$, denoted by $\mathcal{N}_i$,  is a Poisson variable of parameter $nJ_i(t_n)$. Conditional on all $J_i(t_n)$'s, all $\mathcal{N}_i$'s are independent. Let $\mathcal{N}$ be the maximum of all $\mathcal{N}_i$'s.  Then, using {Lemmas \ref{lemtec}} and \ref{cle2},
\begin{align*}
&\mathbb{P}(\mathcal{N}\leq xn^{1/\alpha})=\mathbb{E}[\Pi_{i=1}^{D({t_n})}\mathbb{P}(\mathcal{N}_i\leq xn^{1/\alpha})]\longrightarrow \exp(-\gamma_t^{1-\alpha}\frac{x^{-\alpha}}{\Gamma(2-\alpha)})=\exp(- \frac{(\alpha-1)tx^{-{\alpha}}}{\Gamma(2-\alpha)}),\,\, n\to\infty.
\end{align*}
Lemma \ref{cle2} implies that $Z(t_n)$ tends in probability to $1$ as $n$ goes to infinity. Hence $N$ and $\mathcal{N}$ are close in the limit and standard comparison techniques allows to conclude. 

$2)$ As $Z(t_n)$ converges to $1$,  (\ref{244}) is equivalent to 
$$\lim_{n\to\infty}\mathbb{P}(\exists i: J_{i}(t_n)< n^{\frac{1-\alpha}{\alpha}}y, \mathcal{N}_i\geq xn^{\frac{1}{\alpha}})=0.$$
 Let $\tilde{\mathcal{N}}=\max\{\mathcal{N}_i: J_i(t_n)<n^{\frac{1-\alpha}{\alpha}}y\}$. It is necessary and sufficient to show that $\displaystyle \lim_{n\to\infty}\mathbb{P}(\tilde{\mathcal{N}}\geq xn^{\frac{1}{\alpha}})=0$. Notice that conditional on $J_i(t_n)$, $\mathcal{N}_i$ is a Posson variable with paprameter $nJ_i(t_n)$.  Let $\{P_1(yn^{\frac{1}{\alpha}}), P_2(yn^{\frac{1}{\alpha}}),\cdots\}$ be a sequence of i.i.d. Poisson variables with parameter $yn^{\frac{1}{\alpha}}$ and also independent of $D(t_{n})$. Then  
$$\mathbb{P}(\tilde{N}\geq xn^{\frac{1}{\alpha}})\leq \mathbb{P}\left(\max\{P_i(yn^{\frac{1}{\alpha}}): 1\leq i\leq D(t_n)\}\geq xn^{\frac{1}{\alpha}}\right)=1-\mathbb{E}[(\mathbb{P}(P_1(yn^{\frac{1}{\alpha}})<xn^{\frac{1}{\alpha}}))^{D(t_n)}].$$
Using (\ref{m+1}) and (\ref{alpha1}), one gets 
$$\mathbb{P}(P_1(yn^{\frac{1}{\alpha}})<xn^{\frac{1}{\alpha}})=1-o(\frac{1}{n}).$$
Meanwhile, Lemma \ref{cle2} tells that $\frac{D(t_n)}{n}$ converges in robability to $\gamma_t$ as $n$ goes to infinity. Hence we can conclude.
\end{proof}
\begin{rem}\label{save}
The key point to prove (\ref{244}) is that $Z(t_n)$ converges to $1$ in probability, $\frac{D(t_n)}{n}$ is asymptotically bounded by a positive value from above. The distribution of $\{J_i(t_n)\}_{1\leq i\leq D(t_n)}$ is not necessary to know. One can still find (\ref{244}) true if we replace $t_n$ by a random time and conditions for $Z(t_n)$ and $D(t_n)$ are satisfied at the same time. 
\end{rem}
\textbf{2) A tool lemma for the transfer from $\frac{M_{t_n}}{Z(t_n)}$ to $\frac{M_{R^{-1}((\alpha-1)\alpha\Gamma(\alpha)t_n)}}{Z(R^{-1}((\alpha-1)\alpha\Gamma(\alpha)t_n))}$ : }
Let $(A_1, \cdots, A_k)$ and $(B_1, \cdots, B_k)$ be two partitions of $[0,1]$ with $k\geq 1$.  
We throw away $n$ particles uniformly and independently on $[0,1]$ and regroup those within the same intervals of  $(B_1, \cdots, B_k)$, which gives a sequence of $k$ numbers $(N_{B_1}, \cdots, N_{B_k})$ such that $N_{B_i}$ is the number of particles located in $B_i$. 
We can obtain the law of this sequence in another way using $(A_1, \cdots, A_k)$:
We throw $n$ particles uniformly and independently on $[0,1]$.
Let $I:=\{i: 1\leq i\leq n, l(A_i)\leq l(B_i)\}$, where $l(\cdot)$ denotes the Lebesgue measure. If a particle falls in $A_i$ where $i\in I$, then move this particle to $B_i$. If a particle falls in $A_i$ where $i\in I^c$, then do the following:  we attach to this particle an independent Bernoulli variable with parameter $\frac{l(B_i)}{l({A_i})}$. If the Bernoulli variable gives $1$, then the particle is put into  $B_i$. Otherwise, this particle will be put into $B_j$ for $j\in I$ with probability 
\begin{equation}\label{exception}\frac{l(B_j)-l(A_j)}{\sum_{j\in I}(l(B_j)-l(A_j))}.\end{equation}
We denote by $N_{A_i}^B$ the new amount of particles in $B_i$.
We have the the following result.
\begin{lem}\label{abi}
The following identity in law holds.
$$(N^{B}_{A_1}, \cdots, N^{B}_{A_k})\stackrel{(d)}{=}(N_{B_1}, \cdots, N_{B_k}).$$
\end{lem}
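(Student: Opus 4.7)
The plan is to verify that both random vectors are multinomial with parameters $n$ and $(l(B_1),\dots,l(B_k))$. For the left-hand side, $(N_{B_1},\dots,N_{B_k})$ is obtained by throwing $n$ i.i.d. uniform particles over $[0,1]$ and counting those in each $B_i$, so it is clearly multinomial with these parameters. It is therefore enough to show that in the second construction every particle is independently assigned to block $B_j$ with probability $l(B_j)$, for each $j\in\{1,\dots,k\}$.

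To do so, I would fix one particle and compute the probability that it ends up in $B_j$ after the re-assignment procedure, distinguishing the two cases $j\in I$ and $j\in I^c$. If $j\in I^c$ (so $l(A_j)>l(B_j)$), the only way for the particle to reach $B_j$ is to land in $A_j$ and have the attached Bernoulli variable take value $1$, which yields probability $l(A_j)\cdot l(B_j)/l(A_j)=l(B_j)$. If $j\in I$, the particle reaches $B_j$ either by landing directly in $A_j$, contributing $l(A_j)$, or by landing in some $A_i$ with $i\in I^c$, failing the Bernoulli test (probability $1-l(B_i)/l(A_i)$) and then being sent to $B_j$ according to the weights \eqref{exception}; summing over $i\in I^c$ gives
\[
\sum_{i\in I^c}\bigl(l(A_i)-l(B_i)\bigr)\cdot\frac{l(B_j)-l(A_j)}{\sum_{m\in I}(l(B_m)-l(A_m))}.
\]
The mass-balance identity $\sum_{i\in I^c}(l(A_i)-l(B_i))=\sum_{m\in I}(l(B_m)-l(A_m))$, which follows from $\sum_i l(A_i)=\sum_i l(B_i)=1$, reduces this sum to $l(B_j)-l(A_j)$, so the total probability equals $l(A_j)+(l(B_j)-l(A_j))=l(B_j)$.

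Finally, since the particles are thrown independently and the auxiliary Bernoulli variables and tie-breaking choices are attached independently to each particle, the assignments are independent across particles. Hence $(N^B_{A_1},\dots,N^B_{A_k})$ is multinomial with parameters $n$ and $(l(B_1),\dots,l(B_k))$, proving the identity in law. The only non-trivial step is the bookkeeping mass-balance identity above; the rest is a direct computation on one particle, so I do not expect any real obstacle.
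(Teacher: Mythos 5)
Your proof is correct, and it takes a genuinely different route from the paper's. You identify both vectors as multinomial with parameters $n$ and $(l(B_1),\dots,l(B_k))$ by computing, for a single particle, the total probability of ending up in $B_j$ under the re-assignment procedure (splitting into $j\in I^c$, where only the ``land in $A_j$ and pass the Bernoulli test'' route is available, and $j\in I$, where the direct route contributes $l(A_j)$ and the redistributed mass contributes $l(B_j)-l(A_j)$ via the mass-balance identity $\sum_{i\in I^c}(l(A_i)-l(B_i))=\sum_{m\in I}(l(B_m)-l(A_m))$), and then invoking independence across particles. The paper instead uses a pathwise coupling: since only the Lebesgue measures matter, it rearranges the two partitions so that $A_i\subseteq B_i$ for $i\in I$ and $B_i\subseteq A_i$ for $i\in I^c$, and then reads the Bernoulli variable and the redistribution weights \eqref{exception} as literally recording which $B_j$ the particle already occupies. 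The coupling argument is shorter and explains where the algorithm comes from, but it silently requires arranging the sets so that, conditionally on a particle lying in $A_i\setminus B_i$, its location is spread over the sets $B_j\setminus A_j$ ($j\in I$) in proportion to their measures; your marginal computation sidesteps that geometric bookkeeping entirely and is, if anything, the more airtight of the two. One cosmetic slip: you call $(N_{B_1},\dots,N_{B_k})$ the left-hand side, whereas it is the right-hand side of the stated identity; this has no bearing on the argument.
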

\begin{proof}
Notice that only the measure of each element of $(A_1, \cdots, A_k)$ and $(B_1, \cdots, B_k)$ matters,  one can always assume that $[0,1]$ is divided in a way that $A_i$ is contained in $B_i$ for $i\in I$ and $B_i$ is contained in $A_i$ for $i\in I^c$. 
So if a particle is located in $A_i$ for $i\in I$, it is also located in $B_i$. But if a particle is located in $A_i$ for $i\in I^c$, with probability $\frac{l(B_i)}{l(A_i)}$ it is located in $B_i$. Assume that this particle is not located in $B_i$, then it must be in $\cup_{j\in I}B_j/A_j$. Using the uniformity of the throw, this particle falls in $B_j$ with probability (\ref{exception}).
\end{proof}

The above two steps allow to start the proof of Theorem \ref{max}. But before that, let us just recall some technical results from \cite{BBS08}. 
Let $\varepsilon>0$, $t>0$ and $t_n=n^{1-\alpha}t$. 
Let $t_{-}=(1-\varepsilon)t_n$, $t_{+}=(1+\varepsilon)t_n$ and $t_*=(\alpha-1)\alpha\Gamma(\alpha)t_n$.
Define the event $B_{1,t}:=\{t_{-}\leq R^{-1}(t_*)\leq t_{+}\}$.
It can be found in Lemma 4.2 of \cite{BBS08} that there exists a constant $C_{\ref{eq:lem4.2}}$ such that
\begin{equation}\label{eq:lem4.2}
 \P(B_{1,t})\geq1-C_{\ref{eq:lem4.2}}t_*\varepsilon^{-\alpha}.
\end{equation}
Also from Lemma 5.1 of \cite{BBS08}, there exists a constant $C_{\ref{eq:lem5.1a}}$ such that for all $a>0$, $t>0$ and $\eta>0$,
\begin{equation}\label{eq:lem5.1a}
q(a,t,\eta)=\P(\sup_{0\leq s\leq t}|Z(s,a)-a|>\eta)\leq C_{\ref{eq:lem5.1a}}(a+\eta)t\eta^{-\alpha}.
\end{equation}
Thus, if we define $B_{2,t}:=\{1-n^{\frac{1-\alpha}{2\alpha}}\leq Z(s)\leq 1+n^{\frac{1-\alpha}{2\alpha}}, \,\,\forall s\in[t_{-},t_{+}]\}$, we can obtain that
\begin{equation}\label{eq:lem5.1b}
 \P(B_{2,t})\geq1-C_{\ref{eq:lem5.1a}}t(1+\varepsilon)(1+n^{\frac{1-\alpha}{2\alpha}})n^{\frac{1-\alpha}{2}}.
\end{equation}

\begin{proof}[Proof of Theorem \ref{max}.]
{Lemma \ref{cle1} tells us that for any $s\geq 0$, we have 
 $$\frac{M_{R^{-1}(s)}}{Z(R^{-1}(s))}\stackrel{(d)}{=}\Theta(s).$$}Let $\pi$ be the partition of ${\mathbb{N}_n}$ obtained from a paintbox associated with $\frac{M_{R^{-1}(s)}}{Z(R^{-1}(s))}$. Then $\pi\stackrel{(d)}{=}\Pi^{(n)}(s)$. If $R^{-1}(s)\geq t_-$, we can as well at first build a partition  from a paintbox associated with $\frac{M_{t_{-}}}{Z(t_{-})}$ and then use Lemma \ref{abi} to get $\pi$.  This kind of construction is the key of this proof.

%{ Recall that $(J_i(s),1\leq i\leq D(s))$ is the mass of atoms of $M(s)$ for $s\geq 0$, ranked in decreasing order, defined in Lemma \ref{cle2}. Citing the Proposition $1.6$ of \cite{BBS08}, for any single $s$, such that $t_-\leq s\leq t_+$,
  %\begin{equation}\label{ft-+}
   %\lim_{y\to \infty}\lim_{n\to\infty}\mathbb{P}(J_1(s)\geq n^{(\alpha-1)/\alpha}y)=0.
  %\end{equation}
  % But a stronger result is possible: 
   % \begin{equation}\label{t-+}
 %\lim_{y\to \infty}\limsup_{n\to\infty}\sup_{t_-\leq s\leq t_+}\mathbb{P}(J_1(s)\geq n^{(\alpha-1)/\alpha}y)=0.
 % \end{equation}
  %The argument is simple: Assume that the above convergence is not true. Then there exists $\phi>0$ such that for any $M>0$,  one can find $n>M, y>M$ and $s'\in[t_-,t_+]$  such that $\mathbb{P}(J_1(s')\geq n^{(\alpha-1)/\alpha y})>\phi.$ Then applying (\ref{eq:lem5.1a}), for any small number $0<\varphi<1$, one gets 
  
%  $$\mathbb{P}(J_1(t_+)\geq (1-\varphi)J_1(s')\geq (1-\varphi)n^{(\alpha-1)/\alpha }y)=1-o(1),$$
  
  %which is not consistent with (\ref{ft-+}). Hence (\ref{t-+}) is true. 
  
  %By Markov and branching properties of CSBP, when time $s$ is larger than $t_{-}$, we can consider the CSBP as the sum of $D(t_{-})$ independent CSBP's which we denote by $m_i(s)=Z_i(s-t_{-}, J_i(t_-))$. Notice that $m_i(s)$ can be $0$ while $J_i(t_{-})$ is always positive. }

{ For $s\geq t_{-}$, one builds a partition of ${\mathbb{N}_n}$ from a paintbox associated with $(\frac{m_i(s)}{Z(s)},{1\leq i\leq D(t_{-})})$. 
We denote this partition by $V^{(n)}(s)=(V_1(s),V_2(s),\cdots, V_{D(t_{-})}(s))$ 
%where $V_i^{(n)}(u)$ is the set of particles related to $m_i(u)$
.
Let $I_i^{(n)}(s)$ be the number of particles in $V_i^{(n)}(s)$.}
 
{For $s\geq t_{-}$, let $M^{(n)}(s)=\sup\{I_i^{(n)}(s), 1\leq i\leq D(t_{-})\}$ be the size of the largest block of $V^{(n)}(s)$.  
Let $x>0$ and $\displaystyle B_{3,t}=\{ \exists k : I_k^n(t_{-})\geq xn^{\frac{1}{\alpha}}, J_k(t_-)\geq n^{\frac{2(1-\alpha)}{\alpha}},\sup_{t_{-}\leq s\leq t_{+}}|{m_k(s)-J_k(t_-)}|\leq \varepsilon J_k(t_{-})\}$.}

 On the event $ B_{3,t}$, we have that $M^{(n)}(t_-)\geq xn^{\frac{1}{\alpha}}$.
 Conditional on $B_{1,t}$ we can build the partition $V^{(n)}(R^{-1}(t_{*}))$ from a paintbox associated to the partition 
 $Z(t_-)^{-1}(J_1(t_-),\dots, J_{D(t_-)}(t_-))$ and Lemma \ref{abi}.
  Let $B(m,p)$ be a  binomial variable with parameters $m\geq 2 $ and $0\leq p\leq 1$. Lemma \ref{abi} implies
 \begin{align*} 
 &\mathbb{P}\left(M^{(n)}(R^{-1}(t_*))\geq (1-2\varepsilon)xn^{\frac{1}{\alpha}}|B_{1,t}\cap B_{2,t}\cap B_{3,t}\right) \nonumber\\
\geq&\mathbb{P}\left(B\left(\lceil xn^{\frac{1}{\alpha}}\rceil, \frac{m_k(R^{-1}(t_{*}))Z(t_{-})}{J_k(t_{-})Z(R^{-1}(t_{*}))}\wedge 1 \right)\geq (1-2\varepsilon)xn^{\frac{1}{\alpha}}|B_{1,t}\cap B_{2,t}\cap B_{3,t}\right)\\
\geq &\mathbb{P}\left(B\left(\lceil xn^{\frac{1}{\alpha}}\rceil, (1-\varepsilon)\frac{1-n^{\frac{1-\alpha}{2\alpha}}}{1+n^{\frac{1-\alpha}{2\alpha}}}\right)\geq (1-2\varepsilon)xn^{\frac{1}{\alpha}}\right)\\
=&\mathbb{P}\left((xn^{\frac{1}{\alpha}})^{-1}B\left(\lceil xn^{\frac{1}{\alpha}}\rceil, (1-\varepsilon)\frac{1-n^{\frac{1-\alpha}{2\alpha}}}{1+n^{\frac{1-\alpha}{2\alpha}}}\right)\geq (1-\varepsilon)-\varepsilon\right).
 \end{align*} 
 A law of large numbers argument implies that
\begin{equation}\label{right1}
 \mathbb{P}\left(M^{(n)}(R^{-1}(t_*))\geq (1-2\varepsilon)xn^{\frac{1}{\alpha}}|B_{1,t}\cap B_{2,t}\cap B_{3,t}\right)\geq1-\varepsilon
\end{equation}
for $n$ large enough.
Now observe that 
  \begin{align*}
  \mathbb{P}(B_{3,t})&= \mathbb{P}( \exists k : I_k^n(t_{-})\geq xn^{\frac{1}{\alpha}}, J_k(t_-)\geq n^{\frac{2(1-\alpha)}{\alpha}})\\
  &\quad \times  \mathbb{P}(\sup_{t_{-}\leq s\leq t_{+}}|m_k(s)-J_k(t_{-})|\leq\varepsilon J_k(t_{-})|\{ \exists k : I_k^n(t_{-})\geq xn^{\frac{1}{\alpha}}, J_k(t_-)\geq n^{\frac{2(1-\alpha)}{\alpha}})\\
  &= \mathbb{P}( \exists k : I_k^n(t_{-})\geq xn^{\frac{1}{\alpha}}, J_k(t_-)\geq n^{\frac{2(1-\alpha)}{\alpha}})\\
  &\quad \times (1-\mathbb{E}[q(J_k(t_{-}), t_{+}-t_{-}, \varepsilon J_k(t_{-}))| \exists k : I_k^n(t_{-})\geq xn^{\frac{1}{\alpha}}, J_k(t_-)\geq n^{\frac{2(1-\alpha)}{\alpha}})\\
  &\geq \mathbb{P}( \exists k : I_k^n(t_{-})\geq xn^{\frac{1}{\alpha}}, J_k(t_-)\geq n^{\frac{2(1-\alpha)}{\alpha}}){(1-2tC_{\ref{eq:lem5.1a}}n^{\frac{(1-\alpha)(2-\alpha)}{\alpha}}(1+\varepsilon)\varepsilon^{1-\alpha}}).
  \end{align*}
 Using Lemma \ref{lem:baseth1.5}, 
\begin{align*}
\mathbb{P}( \exists k : I_k^n(t_{-})\geq xn^{\frac{1}{\alpha}}, J_k(t_-)\geq n^{\frac{2(1-\alpha)}{\alpha}})&\sim \mathbb{P}( \exists k : I_k^n(t_{-})\geq xn^{\frac{1}{\alpha}})=\mathbb{P}(M^{(n)}(t_-)\geq n^{\frac{1}{\alpha}}x)\\
&\sim 1-\exp(- (1-\varepsilon)\frac{(\alpha-1)tx^{-{\alpha}}}{\Gamma(2-\alpha)}).
\end{align*}
In consequence,  \begin{align*}
  \underset{n\to\infty}{\liminf}\;\mathbb{P}(B_{3,t})
 \geq 1-\exp(- (1-\varepsilon)\frac{(\alpha-1)tx^{-{\alpha}}}{\Gamma(2-\alpha)})
\end{align*}
when $n$ tends to $\infty$.
  Then, thanks to \eqref{eq:lem4.2} and  \eqref{eq:lem5.1b},
  we deduce that 
  $$\underset{n\to\infty}{\liminf}\;\mathbb{P}(B_{1,t}\cap B_{2,t}\cap B_{3,t})\geq 1-\exp(- (1-\varepsilon)\frac{(\alpha-1)tx^{-{\alpha}}}{\Gamma(2-\alpha)}).$$
 Combining the latter with (\ref{right1}), we obtain 
  
\begin{align}\label{lowerbound}\underset{n\to\infty}{\liminf}\;\mathbb{P}\left(M^{(n)}(R^{-1}(t_*))\geq (1-2\varepsilon)xn^{\frac{1}{\alpha}}\right)\geq 1-\exp(- (1-\varepsilon)\frac{(\alpha-1)tx^{-{\alpha}}}{\Gamma(2-\alpha)}).\end{align}
  
  Next, we seek to find an upper bound for $\mathbb{P}\left(M^{(n)}(R^{-1}(t_*))\geq xn^{\frac{1}{\alpha}}\right)$. Conditional on $B_{1,t}$, we construct $V^{(n)}(t_{+})$ from $V^{(n)}(R^{-1}(t_{*}))$ using the method in Lemma \ref{abi}. Let 
$$B_{4,t}=B_{1,t}\cap\{\exists k: I_k^{(n)}(R^{-1}(t_*))\geq xn^{\frac{1}{\alpha}}, m_k(R^{-1}(t_{*}))\geq n^\frac{2(1-\alpha)}{\alpha}, \sup_{ R^{-1}(t_{*})\leq s\leq t_{+}}\frac{|m_k(s)-m_k(R^{-1}(t_{*}))|}{m_k(R^{-1}(t_{*}))}\leq \varepsilon \}.$$
 Similarly as for the lower bound,  
   \begin{align}\label{right2}
 &\mathbb{P}\left(M^{(n)}(t_{+})\geq (1-2\varepsilon)xn^{\frac{1}{\alpha}}|B_{2,t}\cap B_{4,t}\right) \nonumber\\
\geq&\mathbb{P}\left(B\left(\lceil xn^{\frac{1}{\alpha}}\rceil, \frac{Z(R^{-1}(t_{*}))m_k(t_+)}{Z(t_{+})m_k(R^{-1}(t_{*}))}\wedge 1 \right)\geq (1-2\varepsilon)xn^{\frac{1}{\alpha}}|B_{2,t}\cap B_{4,t}\right)\nonumber\\
\geq &\mathbb{P}\left(B\left(\lceil xn^{\frac{1}{\alpha}}\rceil, (1-\varepsilon)\frac{1-n^{(1-\alpha)/\alpha}}{1+n^{(1-\alpha)/\alpha}}\right)\geq (1-2\varepsilon)xn^{\frac{1}{\alpha}}\right)\longrightarrow 1.
 \end{align}
  
  Using the strong Markov property of the CSBP and (\ref{eq:lem5.1a}), we have 
  
  \begin{align}\label{b}
  \mathbb{P}(B_{4,t})&=\mathbb{P}(B_{1,t}\cap\{\exists k: I_k^{(n)}(R^{-1}(t_*))\geq xn^{\frac{1}{\alpha}}, m_k(R^{-1}(t_{*}))\geq n^\frac{2(1-\alpha)}{\alpha}\})\\
&\quad \times {(1-2tC_{\ref{eq:lem5.1a}}n^{\frac{(1-\alpha)(2-\alpha)}{\alpha}}(1+\varepsilon)\varepsilon^{1-\alpha})}
  \end{align}
  Notice that using (\ref{eq:lem5.1a}), in the sense of convergence of probability
 $$\lim_{n\to\infty}\sup_{t_-\leq s\leq t_+}Z(s)=\lim_{n\to\infty}\inf_{t_-\leq s\leq t_+}Z(s)=1$$
Together with (\ref{eq:lem4.2}), we get the following convergence in probability
$$\lim_{n\to\infty}Z(R^{-1}(t_*))=1.$$
Remark \ref{save1} tells that $D(t)$ is decreasing on t. Under $B_{1,t}$, $D(t_-)\leq D(R^{-1}(t_*))\leq D(t_+)$. It is then easy to deduce that $\frac{D(R^{-1}(t_*))}{n}$ is asymptotically bounded from above by a certain positive number.  
Now we can apply Remark \ref{save} and get 
\begin{equation}\label{t-+}\mathbb{P}(B_{4,t})=\mathbb{P}(\exists k: I_k^{(n)}(R^{-1}(t_*))\geq xn^{\frac{1}{\alpha}})+o(1)=\mathbb{P}(M^{(n)}(R^{-1}(t_*))\geq xn^{\frac{1}{\alpha}})+o(1).\end{equation}
Using (\ref{right2}), (\ref{eq:lem5.1b}) and (\ref{t-+})
    \begin{align}\label{upperbound}
    &\limsup_{n\longrightarrow \infty} \mathbb{P}(M^{(n)}(R^{-1}(t_{*}))\geq xn^{\frac{1}{\alpha}})\nonumber\\
    \leq& \lim_{n\longrightarrow \infty}\mathbb{P}(M^{(n)}(t_{+})
    \geq (1-2\varepsilon)xn^{\frac{1}{\alpha}})\nonumber\\
    =&1-\exp({-(x(1-2\varepsilon))^{-\alpha}\frac{(\alpha-1)t(1+\varepsilon)}{\Gamma(2-\alpha)}}).
    \end{align}
Since $\varepsilon$ can be arbitrarily small,  (\ref{lowerbound}) and (\ref{upperbound}) allow to conclude.\end{proof}

Finally, observe that Corollary \ref{max1}
is obtained from a combination of Lemma \ref{proba} and Theorem \ref{max}.

\end{document}